\renewcommand{\baselinestretch}{\baselinestretch}
\renewcommand{\baselinestretch}{1.1}
\numberwithin{equation}{section}
\newtheorem{thm}{Theorem}[section]
\newtheorem{lem}[thm]{Lemma}
\newtheorem{prop}[thm]{Proposition}
\theoremstyle{definition}
\theoremstyle{remark}
\numberwithin{equation}{section}
\newcommand{\ra}{{\ \longrightarrow \ }}
\newcommand{\rat}{{\ \overset{3}{\longrightarrow} \ }}
\newcommand{\gen}{\text{gen}}
\newcommand{\spn}{\text{spn}}
\newcommand{\n}{{\mathbb N}}
\newcommand{\z}{{\mathbb Z}}
\newcommand{\Mod}[1]{\ (\mathrm{mod}\ #1 )}
\begin{document}
\title[]{Tight universal octagonal forms}

\author{Jangwon Ju and Mingyu Kim}

\address{Department of Mathematics, University of Ulsan, Ulsan 44610, Korea}
\email{jangwonju@ulsan.ac.kr}

\address{Department of Mathematics, Sungkyunkwan University, Suwon 16419, Korea}
\email{kmg2562@skku.edu}

\thanks{This research of the first author was supported by the National Research Foundation of Korea(NRF) grant funded by the Korea government(MSIT) (NRF-2019R1F1A1064037)}

\thanks{This research of the second author was supported by the National Research Foundation of Korea(NRF) grant funded by the Korea government(MSIT) (NRF-2021R1C1C2010133)}

\subjclass[2020]{Primary 11E12, 11E20.} \keywords{tight universal, octagonal numbers}
\begin{abstract}
Let $P_8(x)=3x^2-2x$. For positive integers $a_1,a_2,\dots,a_k$, a polynomial of the form $a_1P_8(x_1)+a_2P_8(x_2)+\cdots+a_kP_8(x_k)$ is called an octagonal form.
For a positive integer $n$, an octagonal form is called tight $\mathcal T(n)$-universal if it represents (over $\z$) every positive integer greater than or equal to $n$ and does not represent any positive integer less than $n$.
In this article, we find all tight $\mathcal T(n)$-universal octagonal forms for every $n\ge 2$.
Furthermore, we provide an effective criterion on tight $\mathcal T(n)$-universality of an arbirary octagonal form, which is a generalization of ``15-Theorem" of Conway and Schneeberger.
\end{abstract}
\maketitle

\section{Introduction}
A (positive definite integral) quadratic form $f$ is called universal if it represents all positive integers.
Ramanujan \cite{R} determined all diagonal quaternary universal quadratic forms (see also \cite{D}).
In 1993, Conway and Schneeberger announced the so called ``15-Theorem" which states that a quadratic form representing all positive integers up to 15 actually represents every positive integer.
Bhargava \cite{B} found a simple proof for the fifteen theorem and this theorem was generalized by Bhargava and Hanke \cite{BHa} to the case of non-classic integral quadratic forms.

For an integer $m\ge 3$, define a polynomial
$$
P_m(x)=\frac{(m-2)x^2-(m-4)x}{2}.
$$
An integer of the form $P_m(u)$ for some integer $u$ is called a generalized $m$-gonal number.
A polynomial of the form $a_1P_m(x_1)+a_2P_m(x_2)+\cdots+a_kP_m(x_k)$ for positive integers $a_1,a_2,\dots,a_k$ is called {\it a sum of generalized $m$-gonal numbers} or {\it a $k$-ary $m$-gonal form}.
We say that a nonnegative integer $\nu$ is represented by an $m$-gonal form $a_1P_m(x_1)+a_2P_m(x_2)+\cdots+a_kP_m(x_k)$ if the Diophantine equation
$$
a_1P_m(x_1)+a_2P_m(x_2)+\cdots+a_kP_m(x_k)=\nu
$$
has an integer solution.
In 1862, Liouville determined all ternary triangular forms representing every positive integer.
Bosma and Kane \cite{BK} proved the triangular theorem of eight which states that if a triangular form $g$ represents the positive integers 1,2,4,5 and 8, then $g$ represents all positive integers.
Oh and the first author \cite{JO} classified universal octagonal forms and proved ``60-Theorem" analogous to the above theorems.
The pentagonal case was resolved by the first author \cite{Ju}.

Recently, Oh and the second author \cite{KO} studied tight universal quadratic forms.
For a positive integer $n$, an integral quadratic form $f$ is called {\it tight $\mathcal T(n)$-universal} if the set of all nonzero integers represented by the quadratic form $f$ is equal to the set of all integers greater than $n$.
For $n\ge 2$, the notion of tight $\mathcal T(n)$-universality is a particular form of almost universality of quadratic forms, and clearly, the tight $\mathcal T(1)$-universality coincides with the universality.
In \cite{KO}, all diagonal tight $\mathcal T(n)$-universal quadratic forms are determined for every positive integer $n$ greater than 1.
Note that one might easily classify diagonal universal quadratic forms by using Conway-Schneeberger fifteen theorem.
The second author \cite{MG} generalized the notion of tight universality to an $m$-gonal form in an expected way and determined all tight $\mathcal T(n)$-universal $m$-gonal forms for the following pairs $(m,n)$;
$$
\text{(i)}\ m=3,\ n\ge 3;\ \text{(ii)}\ m=5,\ n\ge 7;\ \text{(iii)}\ m=7,\ n\ge 11;\ \text{(iv)}\ m\ge 8,\ n\ge 2m-5.
$$
Note that the case of $(m,n)=(3,2)$ was dealt with in \cite{Ju2}.
Thus the problem of classifying all tight $\mathcal T(n)$-universal $m$-gonal forms have already been resolved for the cases when $m=3,4$.

The aim of this paper is to find all tight $\mathcal T(n)$-universal octagonal forms for every $n\ge 2$.
This finishes the classification of tight universal octagonal forms since universal octagonal forms have already been determined.
In Section 2, general notation and terminologies will be given.
We also introduce an algorithm giving all tight $\mathcal T(n)$-universal $m$-gonal forms for a given pair $(m,n)$.
In Section 3, we prove several lemmas and propositions on representations by octagonal forms.
Finally, the tight $\mathcal T(n)$-universal octagonal forms are classified for every $n\ge 2$ in Section 4.
The main results of this article are the following four theorems which give complete classification of tight universal octagonal forms.

\begin{thm} \label{thm2}
There are exactly 57 new tight $\mathcal T(2)$-universal octagonal forms, which are listed in Table \ref{tablet2}.
Furthermore, an octagonal form $g$ is tight $\mathcal T(2)$-universal if and only if $g$ does not represent 1 and does represent
$$
2,3,4,6,8,9,11,12,14\ \ \text{and}\ \ 18.
$$
\end{thm}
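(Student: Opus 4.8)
The plan is as follows. The ``only if'' direction is immediate from the definition, so the content lies in the converse: assuming $g=a_1P_8(x_1)+\cdots+a_kP_8(x_k)$ does not represent $1$ but does represent $2,3,4,6,8,9,11,12,14,18$, I want to show $g$ represents every integer $\ge 2$. First I would record the elementary observation that, since $P_8(u)\ge 0$ for all $u\in\z$ with equality only at $u=0$ and $P_8(1)=1$ is the smallest positive octagonal number, a representation of $1$ would force some coefficient $a_i=1$; hence the hypothesis gives $a_i\ge 2$ for every $i$.

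The heart of the argument is the escalation algorithm of Section~2 specialized to $(m,n)=(8,2)$. I would build the rooted tree whose root is the empty form and whose edges are escalation steps: at a node $h$ (an octagonal form with all coefficients $\ge 2$ that is not yet tight $\mathcal T(2)$-universal) one takes its \emph{truant} $t$, the least integer $\ge 2$ it fails to represent, and lets the children of $h$ be the forms $h+c_1P_8(x_1)+\cdots+c_rP_8(x_r)$ (fresh variables, each $c_i\ge 2$) that represent $t$ and are minimal with this property. Since every octagonal summand in a representation of $t$ is nonnegative and at most $t$, each node has finitely many children and all adjoined coefficients are $\le t$. I would then invoke the propositions of Section~3 to conclude that this tree is finite — the key input being that any escalator with sufficiently many variables is already tight $\mathcal T(2)$-universal — so that the tree has finitely many nodes, its leaves are precisely the $57$ forms of Table~\ref{tablet2}, and the set of all truants occurring in it is exactly $\{2,3,4,6,8,9,11,12,14,18\}$. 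Granting this, the criterion drops out: beginning with the empty subform of $g$ and using at each stage that the truant of the current subform lies in this ten-element set and is therefore represented by $g$, I can adjoin appropriate variables of $g$ (with their coefficients in $g$) to move to a strictly larger subform of $g$ that is again a node of the tree; since $g$ has only finitely many variables this must terminate at a subform of $g$ with no truant, that is, a tight $\mathcal T(2)$-universal form, whence $g$ itself represents every integer $\ge 2$. Counting leaves then yields the $57$ ``new'' (minimal) forms.

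The main obstacle is the finiteness input supplied by Section~3, namely proving that the finitely many multi-variable escalator octagonal forms genuinely represent every integer $\ge 2$. I would reduce this, via the identity $3P_8(x)+1=(3x-1)^2$, to the statement that the diagonal quadratic form $a_1z_1^2+\cdots+a_kz_k^2$ represents $3\nu+a_1+\cdots+a_k$ with every $z_i$ coprime to $3$, for all $\nu\ge 2$. For the quaternary and larger escalators this should follow from local--global arguments together with a finite verification of small cases, but the ternary escalators are delicate: there one must control the integers represented by a ternary quadratic form within a fixed congruence and square class using its genus and spinor genus, and I expect this case analysis to be the technical crux of the whole argument.
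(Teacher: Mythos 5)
Your proposal follows essentially the same route as the paper: run the escalation algorithm of Section~2 with $n=2$ to produce the finite tree of escalators, the truant set $C(2)=\{2,3,4,6,8,9,11,12,14,18\}$, and the $57$ minimal leaves, then verify via the identity $3P_8(x)+1=(3x-1)^2$ and the Section~3 representation results that the escalators are tight $\mathcal T(2)$-universal, and conclude by the walk-down-the-tree argument (Lemma~\ref{lemcv}). Two cosmetic differences only: the paper's escalation adjoins one coefficient at a time from $\mathcal E(\mathbf a)=\{g: n\le g\le \psi(\mathbf a)-n\}\cup\{\psi(\mathbf a)\}$ rather than minimal multi-variable extensions representing the truant, and the genus/spinor-genus analysis of ternary quadratic lattices is what powers the quaternary-and-larger octagonal escalators (no ternary octagonal form is tight $\mathcal T(2)$-universal), not a separate ``ternary escalator'' case.
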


\begin{thm} \label{thm3}
There are exactly 147 new tight $\mathcal T(3)$-universal octagonal forms, which are listed in Table \ref{tablet3}.
Furthermore, an octagonal form $g$ is $\mathcal T(3)$-universal if and only if $g$ does not represent 1,2 and does represent
$$
3,4,5,6,13,14,16,17,21,22,27\ \ \text{and}\ \ 36.
$$
\end{thm}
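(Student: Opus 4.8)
The plan is to run the escalation algorithm of Section~2 in the case $m=8$, $n=3$; the argument is parallel to the one for Theorem~\ref{thm2}. The ``only if'' direction is immediate: a tight $\mathcal T(3)$-universal octagonal form represents every integer $\ge 3$ --- in particular each of $3,4,5,6,13,14,16,17,21,22,27,36$ --- and represents no positive integer less than $3$, hence neither $1$ nor $2$. Everything else is the ``if'' direction together with the count.

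First I would normalize. If $g=a_1P_8(x_1)+\dots+a_kP_8(x_k)$ represents neither $1$ nor $2$, then every $a_i\ge 3$, since the least positive value of $a_iP_8(x_i)$ is $a_i$; and if moreover $g$ represents $3$, then, because every octagonal number is nonnegative and every $a_i\ge 3$, a representation of $3$ can have only one nonzero term, forcing some coefficient to equal $3$. Thus $3P_8(x)$ is a summand of $g$. Next I would build the pruned escalator tree: a node is an octagonal form $h$ with all coefficients $\ge 3$ that represents every integer in $\{3,4,\dots,t(h)-1\}$, where $t(h)=\min\{\nu\ge 3:\ h\ \text{does not represent}\ \nu\}$ is the truant; the children of $h$ are the forms $h+aP_8(y)$ with $3\le a\le t(h)$ that represent $t(h)$, and a node with no truant is a leaf. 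Carrying out this (finite) search is the computational heart of the proof, and it should yield two facts: the set of truants that occur in the tree is exactly $\{3,4,5,6,13,14,16,17,21,22,27,36\}$, and the tree has exactly $147$ leaves, which are the forms of Table~\ref{tablet3}.

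To finish the ``if'' direction, suppose $g$ represents none of $1,2$ but all twelve listed integers; I claim $g$ contains one of the $147$ leaves as a sub-sum. Starting from the summand $3P_8(x)$, if the escalator $h\subseteq g$ reached so far is not a leaf, then $t(h)$ belongs to the critical set, so $g$ represents $t(h)$; in such a representation at least one active term does not belong to $h$, and deleting one such term leaves a value that is either $0$ or lies in $\{3,\dots,t(h)-1\}$ (the contribution of $h$ is a value $<t(h)$ that $h$ represents, hence $0$ or $\ge 3$, and any other deleted-away contributions are each $\ge 3$), so that value is represented by $h$; hence $g$ contains a child of $h$. Iterating and using finiteness of the tree, $g$ contains a leaf $g_0$. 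Since $g_0$ is tight $\mathcal T(3)$-universal and all coefficients of $g$ are $\ge 3$, $g$ represents every integer $\ge 3$ and no positive integer below $3$, i.e.\ $g$ is tight $\mathcal T(3)$-universal. The same descent shows that every tight $\mathcal T(3)$-universal octagonal form contains one of the $147$ leaves, which yields the classification.

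The two nontrivial ingredients, both drawing on the representation results of Section~3, are (i) the finiteness of the escalator tree, i.e.\ a bound on the number of escalation steps, which requires certifying as early as possible along each branch that an escalator already represents all large integers --- via local solvability together with an estimate for the number of representations, in the spirit of the octagonal ``$60$-Theorem'' of \cite{JO} --- and then checking the finitely many small values directly; and (ii) the verification that each of the $147$ candidate leaves is in fact tight $\mathcal T(3)$-universal. I expect (i), in particular controlling the branches carrying many small coefficients so that the tree does not grow without bound, to be the main obstacle; the remainder is an extensive but routine tree search.
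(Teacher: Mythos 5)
Your overall strategy is the one the paper uses: the paper gives no separate proof of Theorem \ref{thm3}, deferring to the template of Theorem \ref{thm2}, which is exactly the escalation algorithm of Section 2 (start from $p_8(3)$, escalate by truants, verify the leaves via the representation results of Section 3, read off the critical set, and invoke Lemma \ref{lemcv}). Your descent step is in fact correctly justified: the key point, which you state, is that an escalator $h$ represents every integer in $\{3,\dots,t(h)-1\}$ and nothing in $\{1,2\}$, so after deleting one active term $a_jP_8(u_j)$ from a representation of $t(h)$ by $g$, the remaining value lies in $\{0\}\cup\{3,\dots,t(h)-3\}$ and is therefore represented by $h$ \emph{itself}, whence $h*a_j$ represents $t(h)$. (Your children, required to represent the truant, form a slightly more aggressively pruned tree than the paper's $\mathcal E(\mathbf a)=\{g: 3\le g\le \psi(\mathbf a)-3\}\cup\{\psi(\mathbf a)\}$, but the descent lemma makes either choice legitimate.)

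The genuine gap is in the count. The theorem asserts there are exactly 147 \emph{new} tight $\mathcal T(3)$-universal forms, where ``new'' means no proper subform is already tight $\mathcal T(3)$-universal, and you identify the 147 forms with the leaves of your tree. These are not the same set: the tree has strictly more leaves than new forms, because a non-leaf node can be escalated by a coefficient that completes a \emph{different}, smaller tight universal subform. Concretely, $(3,3,4,5,6)$ is new tight $\mathcal T(3)$-universal; escalating the non-leaf $(3,3,4,5,5)$ (truant-representing coefficient $6$) produces the leaf $(3,3,4,5,5,6)$, which contains $(3,3,4,5,6)$ and is therefore a leaf but not new --- and indeed $a_6=6$ is excluded from the corresponding row of Table \ref{tablet3}. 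This is precisely the paper's distinction between $U(k)$ and $NU(k)$ (for $n=2$ one has $|U(5)|=49$ but $|NU(5)|=39$), and without this filtering step your procedure would output more than 147 forms. You should also be explicit that the heavy lifting is item (ii) of your closing paragraph: certifying each leaf represents all sufficiently large integers is what Proposition \ref{propmany}, Lemma \ref{lem11} and Methods (i)--(ii) are for, while termination of the tree is not something to re-derive analytically --- it is Lemma \ref{lemesc}, quoted from \cite{JK}.
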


\begin{thm} \label{thm4}
There are exactly 22 new tight $\mathcal T(4)$-universal octagonal forms, which are listed in Table \ref{tablet4}.
Furthermore, an octagonal form $g$ is $\mathcal T(4)$-universal if and only if $g$ does not represent 1,2,3 and does represent
$$
4,5,6,7,8,23\ \ \text{and}\ \ 28.
$$
\end{thm}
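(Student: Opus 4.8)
We treat Theorem~\ref{thm4}; the proofs of Theorems~\ref{thm2} and~\ref{thm3} run along exactly the same lines, only with different ``escalator'' data. The ``only if'' direction is immediate from the definition, so suppose $g=\sum_{i=1}^{k}a_iP_8(x_i)$ represents none of $1,2,3$ but does represent $4,5,6,7,8,23$ and $28$. Since the nonzero values of $P_8$ are $1,5,8,16,21,\dots$, a term $a_iP_8(x_i)$ is either $0$ or at least $a_i$; hence $g$ represents none of $1,2,3$ precisely when every $a_i\ge4$, which we now assume. Then representing $4$ forces some coefficient to equal $4$; representing $5,6,7$ forces three further coefficients equal to $5,6,7$ (a smaller coefficient such as $4$ leaves a remainder in $\{1,2,3\}$, which no term $a_jP_8(x_j)$ with $a_j\ge4$ can supply); and representing $8$ then forces a fifth coefficient equal to $4$ or $8$. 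Thus, after reordering, $g$ contains one of the quinary subforms
\[
4P_8(x_1)+4P_8(x_2)+5P_8(x_3)+6P_8(x_4)+7P_8(x_5),\qquad 4P_8(x_1)+5P_8(x_2)+6P_8(x_3)+7P_8(x_4)+8P_8(x_5).
\]

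From either root I would run the $(m,n)=(8,4)$ instance of the escalation algorithm of Section~2: at each node pick the smallest integer $\ge4$ not represented by the current subform and branch over the admissible new coefficients ($\ge4$) that could represent it, pruning a branch as soon as the accumulated subform is certified — via the representation lemmas of Section~3 — to represent every integer beyond an explicit bound. Those lemmas are obtained through the substitution $P_8(x)=\tfrac{(3x-1)^2-1}{3}$, which converts ``$g$ represents $n$'' into ``$\sum a_iy_i^2=3n+\sum a_i$ is solvable with all $y_i\equiv2\pmod3$,'' so that each node has only finitely many exceptional integers and the tree is finite. Carrying this out, the minimal tight $\mathcal T(4)$-universal leaves turn out to be exactly the $22$ forms of Table~\ref{tablet4}, and the integers that ever occur as a smallest missing value in the process are precisely $4,5,6,7,8,23,28$; hence a form with all $a_i\ge4$ represents $\{4,5,6,7,8,23,28\}$ if and only if it contains one of the $22$ forms, if and only if it is tight $\mathcal T(4)$-universal, which is the asserted criterion.

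It then remains to verify that each of the $22$ candidate forms genuinely represents every integer $\ge4$. For a candidate $g$ I would single out a subform $h$ in at least four variables, translate as above into an (inhomogeneous) representation problem for $\sum a_iy_i^2$ restricted to the coset $y_i\equiv2\pmod3$, check that the target integers are represented over $\z_p$ for every prime $p$ (only the finitely many $p$ dividing $6\prod a_i$ requiring real attention), and invoke the classical fact that a positive definite quadratic form in at least four variables represents every sufficiently large integer that it represents locally everywhere. Making ``sufficiently large'' effective reduces this to checking the finitely many $n$ with $4\le n\le B$, a direct computation consistent with the hypotheses $4,5,6,7,8,23,28$.

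The main obstacle is twofold. First, one must keep the Section~3 bounds tight enough that only a short list of exceptional integers survives at each node, since a crude bound makes the escalation tree explode; this bookkeeping is the bulk of the work and is what ultimately produces the clean witness set $\{4,5,6,7,8,23,28\}$. Second, the genuinely analytic input is an effective version of the statement that the relevant quaternary and quinary forms, restricted to $y_i\equiv2\pmod3$, represent all large integers in the admissible residue classes: the local analysis at $2$ and $3$ must be done with care, and an explicit bound $B$ — obtained, e.g., from Kloosterman-sum/circle-method estimates or from an explicit description of the pertinent genera — is needed before the finite verification can be completed.
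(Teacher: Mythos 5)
Your combinatorial skeleton is the same as the paper's: the observation that avoiding $1,2,3$ forces all $a_i\ge 4$, and that representing $4,5,6,7$ and then $8$ forces the two quinary roots $p_8(4,4,5,6,7)$ and $p_8(4,5,6,7,8)$, is exactly Lemma~\ref{lembase}(ii)--(v) with $n=4$, and your plan to escalate by truants and read off the witness set is exactly the algorithm of Section~2. But there is a genuine gap at the heart of the argument: everything after that point rests on knowing, for each node of the tree, the exact finite set of integers $\ge 4$ missed by the corresponding form (this is what produces the truants $23$ and $28$ and certifies the $22$ leaves), and you never establish this. You assert ``carrying this out, the leaves turn out to be exactly the $22$ forms'' and then, for the verification step, appeal to an effective version of the local--global principle for quaternary forms restricted to the coset $y_i\not\equiv 0\pmod 3$, with an explicit bound $B$ to be supplied by Kloosterman/circle-method estimates. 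You correctly flag this as the main obstacle, but it is not an obstacle you overcome; without an actual value of $B$ (and for coset-restricted quaternary forms such bounds, even when in principle effective, are far too large for the finite check you propose), neither the truant computation nor the universality of the $22$ candidates is proved.

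The paper avoids this analytic machinery entirely. Its Proposition~\ref{propmany} (which covers $(4,4,5,6,7)$ with exceptional set $\{23,28\}$ and $(4,5,6,7,8)$ with empty exceptional set) works by peeling off one or two square variables in prescribed residue classes so that the remainder lands in a \emph{ternary} sublattice of small class number, then controlling that ternary representation with genus/spinor-genus computations, the $\prec_{d,a}$ graph method of Oh (Lemmas~\ref{lemnobad} and~\ref{lembad}), Jones's lemma for the $\not\equiv 0\pmod 3$ condition, and explicit identities such as $x^2+4y^2+7z^2+2yz=x^2+(y-2z)^2+3(y+z)^2$; this yields explicit, small bounds $u_0$ below which a direct computer check is feasible. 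If you want to salvage your route, you must either import these ternary-reduction lemmas (at which point you are reproducing the paper's Section~3) or actually produce and justify the explicit constant $B$ for each candidate form, which is a substantial piece of work not present in your proposal.
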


\begin{thm} \label{thm5}
Let $n$ be an integer greater than or equal to 5.
There are exactly two new tight $\mathcal T(n)$-universal octagonal forms
$$
p_8(n,n,n+1,n+2,\dots,2n-1)\ \ \text{and}\ \ p_8(n,n+1,n+2,\dots,2n-1,2n).
$$
Furthermore, an octagonal form $g$ is tight $\mathcal T(n)$-universal if and only if $g$ does not represent any positive integer less than $n$ and does represent
$$
n,n+1,n+2,\dots,2n-1\ \ \text{and}\ \ 2n.
$$
\end{thm}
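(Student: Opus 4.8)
The plan is to prove both halves of the theorem together by reducing everything to two facts about the two candidate forms $F_1=p_8(n,n,n+1,\dots,2n-1)$ and $F_2=p_8(n,n+1,\dots,2n-1,2n)$, each of which has $n+1\ge 6$ variables. Because $P_8(x)\ge 1$ for every nonzero integer $x$ and $P_8(x)\ge 5$ for $x\notin\{0,1\}$, an octagonal form all of whose coefficients are $\ge n$ represents no positive integer below $n$; in particular, if $g$ represents none of $1,\dots,n-1$ then all of its coefficients are $\ge n$. So the theorem reduces to: \emph{(I)} any octagonal form representing none of $1,\dots,n-1$ but representing $n,n+1,\dots,2n$ contains $F_1$ or $F_2$ as a subform; and \emph{(II)} both $F_1$ and $F_2$ represent every integer $\ge n$. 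Granting these, the ``if'' direction of the criterion is immediate (fill the remaining variables of $g$ with $0$), the ``only if'' direction is trivial, and, since a \emph{new} tight $\mathcal T(n)$-universal form is one admitting no proper tight $\mathcal T(n)$-universal subform, (I) and (II) force such a form to be $F_1$ or $F_2$ (which are themselves new, as deleting any coefficient destroys the representation of some integer in $[n,2n]$); as neither is a subform of the other, there are exactly two.

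Part (I) should follow from a short combinatorial argument. If $g=\sum a_iP_8(x_i)$ represents none of $1,\dots,n-1$, every $a_i\ge n$; and for $n\le v\le 2n$, in any representation $v=\sum a_iP_8(x_i)$ a term with $x_i\notin\{0,1\}$ contributes at least $5a_i\ge 5n>2n$, so every nonzero term contributes its coefficient and $v$ is a subset sum of the $a_i$'s. Such a subset has exactly one element when $v\le 2n-1$ and at most two when $v=2n$. Letting $v$ run through $n,\dots,2n-1$ shows the coefficient multiset of $g$ contains $\{n,n+1,\dots,2n-1\}$, and the case $v=2n$ shows $g$ has a second coefficient equal to $n$ or a coefficient equal to $2n$ -- that is, $g\supseteq F_1$ or $g\supseteq F_2$.

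Part (II) is the heart of the matter, and I would split it at $T:=\tfrac{3n(n+1)}{2}$. For $n\le N\le T$ the argument should be elementary: restricting all variables to $\{0,1\}$ represents every subset sum of the coefficient multiset, and for $n$ consecutive integers together with one extra coefficient these subset sums already cover $[n,T]$ except for $\{2n\}$ (which concerns only $F_1$, where the repeated $n$ supplies it) and a block of fewer than $n$ integers just below $T$; the leftover values are then produced by letting one or two variables equal $-1$ (so $P_8=5$) -- for instance $T-k$ with $1\le k\le n-1$ equals $5n$ plus a three-element subset of $\{n+1,\dots,2n\}$ summing to $4n+k$, which exists precisely because $n\ge 5$. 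This bookkeeping is the sort of thing I expect the lemmas of Section~3 to provide. For $N>T$ I would pass to quadratic forms using $3P_8(x)+1=(3x-1)^2$: writing $c_0,\dots,c_n$ for the coefficients of $F\in\{F_1,F_2\}$, representing $N$ by $F$ is equivalent to representing $M:=3N+\sum_i c_i$ by the positive definite quadratic form $Q:=\sum_i c_it_i^2$ in $n+1\ge 6$ variables, with all $t_i\equiv -1\Mod{3}$. Since the $c_i$ run through many consecutive integers, $Q$ carries at least three unit coefficients modulo every prime; a routine local computation then shows $Q$ represents every $p$-adic integer for all $p$, and that the constraint $t_i\equiv -1\Mod{3}$ only forces $M\equiv\sum_i c_i\Mod{3}$, which already holds. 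Hence the standard theory of quadratic forms in at least five variables yields a representation of $M$, with the variables in the prescribed classes mod $3$, for every sufficiently large $N$.

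The main obstacle is making these two ranges meet: one needs the effective threshold in the large-$N$ argument to stay below roughly $T=\tfrac{3n(n+1)}{2}$ uniformly in $n$, or else to push the combinatorial construction of the first part up to whatever effective bound is available -- and it is exactly here that $n\ge 5$, i.e.\ having $n+1\ge 6$ rather than at most five variables, is used in an essential way. Once the two ranges overlap, (II) follows, and with (I) the theorem is proved.
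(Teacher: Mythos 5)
Your reduction in Part (I) is correct and is essentially the paper's Lemma \ref{lembase}: since every coefficient of $g$ must be $\ge n$ and $P_8(x)\ge 5$ for $x\notin\{0,1\}$, each $v\in[n,2n]$ must be a subset sum of the coefficients, which forces the coefficient multiset to contain $(n,n,n+1,\dots,2n-1)$ or $(n,n+1,\dots,2n)$. The ``if and only if'' criterion then follows exactly as you say once Part (II) is known, and this matches the paper's use of Lemma \ref{lemcv} with $C(n)=\{n,n+1,\dots,2n\}$.

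Part (II), however, has a genuine gap at its central step, and you have named it yourself without closing it. Your elementary subset-sum argument only reaches $N\le T\sim\tfrac{3n(n+1)}{2}$, while your large-$N$ argument appeals to the theorem that a positive definite form in $\ge 5$ variables represents every sufficiently large locally represented integer (with the mod-$3$ side conditions). The effective threshold in that theorem depends on the form: for the diagonal forms $\langle n,n,n+1,\dots,2n-1\rangle$ and $\langle n,n+1,\dots,2n\rangle$ the discriminant grows super-polynomially in $n$, and no known uniform effective bound keeps the exceptional range below a quantity of order $n^2$. Saying ``once the two ranges overlap, (II) follows'' is precisely the assertion that needs proof, and nothing in your argument supplies it. The paper avoids this entirely by a different mechanism: since $b\,P_8(x)+c\,P_8(x)=(b+c)P_8(x)$, one may partition part of the coefficient multiset of $g_n$ (resp.\ $h_n$) into disjoint blocks with sums $n,n,3n,3n$ (resp.\ $n,2n,3n,3n,3n$) --- e.g.\ $\{n\},\{n\},\{n+1,2n-1\},\{n+2,2n-2\}$ --- after reserving one coefficient to fix the residue class modulo $n$; this reduces everything to the universality of the \emph{fixed} forms $p_8(1,1,3,3)$ and $p_8(1,2,3,3,3)$, already established in \cite{JO}, and works uniformly for $n\ge 7$ (resp.\ $n\ge 9$), the disjointness of the blocks being where the lower bound on $n$ enters. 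The finitely many remaining cases $g_5,g_6$ and $h_5,\dots,h_8$ are handled separately in Proposition \ref{propmany}. To repair your proof you would need either such a reduction to a fixed universal subform, or a genuinely uniform effective representation theorem for this family of forms with the mod-$3$ constraints; as written, the argument does not establish that $F_1$ and $F_2$ represent every integer greater than $T$.
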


\section{Preliminaries}
For $k=1,2,3,\dots$, we define a set $\mathcal{N}(k)$ by
$$
\mathcal{N}(k)=\{ \mathbf{a}=(a_1,a_2,\dots,a_k)\in \n^k : a_1\le a_2\le \cdots \le a_k\},
$$
and put $\mathcal{N}=\bigcup_{k=1}^{\infty}\mathcal{N}(k)$.
For $\mathbf{a}=(a_1,\dots,a_k)\in \mathcal{N}(k)$ and $\mathbf{b}=(b_1,\dots,b_s)\in \mathcal{N}(s)$ with $k\le s$, we write $\mathbf{a}\preceq \mathbf{b}\ (\mathbf{a} \prec \mathbf{b})$ if $(a_i)_{1\le i\le k}$ is a (proper) subsequence of $(b_j)_{1\le j\le s}$.
Given a vector $\mathbf{a}\in \mathcal{N}(k)$ and a positive integer $a$, we define a vector $\mathbf{a}*a$ by
$$
\mathbf{a}*a=(a_1,a_2,\dots,a_i,a,a_{i+1},a_{i+2},\dots,a_k)\in \mathcal{N}(k+1),
$$
where $i$ is the maximum index satisfying $a_i\le a$, i.e., $\mathbf{a}*a$ is the vector having coefficients $a_1,a_2,\dots,a_k$ and $a$ in ascending order so that $\mathbf{a}*a\in \mathcal{N}(k+1)$.
For $\mathbf{a}\in \mathcal{N}(k)$ and $\mathbf{b}=(b_1,b_2,\dots,b_s)\in \mathcal{N}(s)$ with $k\le s$, define $\mathbf{a}*\mathbf{b}$ to be the vector
$$
\mathbf{a}*b_1*b_2*\cdots*b_s \in \mathcal{N}(k+s).
$$
We identify $\mathcal{N}(1)$ with $\n$ so that $3*7*2*5$ denotes the vector $(2,3,5,7)\in \mathcal{N}(4)$.

For a vector $\mathbf{a}=(a_1,a_2,\dots,a_k)\in \mathcal{N}(k)$, an octagonal form
$$
a_1P_8(x_1)+a_2P_8(x_2)+\cdots+a_kP_8(x_k)
$$
will be denoted by $p_8(\mathbf{a})$ or $p_8(a_1,a_2,\dots,a_k)$. We define
$$
R(\mathbf{a})=\{ a_1P_8(u_1)+a_2P_8(u_2)+\cdots+a_kP_8(u_k) : u_i\in \z \} \ \ \text{and}\ \ R'(\mathbf{a})=R(\mathbf{a})-\{0\}.
$$
If a nonnegative integer $\nu$ is in the set $R(\mathbf{a})$, then we say that $\nu$ is represented by the octagonal form $p_8(\mathbf{a})$ and write $\nu \ra p_8(\mathbf{a})$.
Let $n$ be a positive integer.
An octagonal form $p_8(\mathbf{a})$ is called {\it $\mathcal T(n)$-universal} if $\mathcal T(n) \subseteq R'(\mathbf{a})$, and {\it tight $\mathcal T(n)$-universal} if $R'(\mathbf{a})=\mathcal T(n)$.
A tight $\mathcal T(n)$-universal octagonal form $p_8(\mathbf{a})$ is called {\it new} if $R'(\mathbf{b})\subsetneq \mathcal T(n)$ for every vector $\mathbf{b}\in \mathcal{N}$ satisfying $\mathbf{b}\prec \mathbf{a}$.

Now, we introduce an algorithm in \cite{JK} that gives all new tight $\mathcal T(n)$-universal $m$-gonal forms for a given pair $(m,n)$.
Since we only consider the case when $m=8$, we describe the algorithm restricted to this case.
Let $n$ be a positive integer.
For $\mathbf{a}\in \mathcal{N}$, we define a set $\Psi(\mathbf{a})$ by
$$
\Psi(\mathbf{a})=\mathcal T(n)-R'(\mathbf{a}),
$$
and a function $\psi : \mathcal{N}\to \mathcal T(n)\cup \{ \infty \}$ by
$$
\psi(\mathbf{a})=\begin{cases}\min(\Psi(\mathbf{a}))&\text{if}\ \ \Psi(\mathbf{a})\neq \emptyset,\\
\infty&\text{otherwise.}\end{cases}
$$
For a vector $\mathbf{a}$ with $\psi(\mathbf{a})<\infty$, we define a set $\mathcal E(\mathbf{a})$ to be
$$
\mathcal E(\mathbf{a})=\{ g\in \z : n\le g\le \psi(\mathbf{a})-n\} \cup \{ \psi(\mathbf{a})\}.
$$
Note that if $\psi(\mathbf{a})<2n$, then $\mathcal E(\mathbf{a})=\{ \psi(\mathbf{a})\}$.
For $k=1,2,3,\dots$, we define subsets $E(k),U(k),NU(k)$ and $A(k)$ of $\mathcal{N}(k)$ recursively as follow;
Put $E(1)=\{(n)\}$.
Define 
$$
U(k)=\{ \mathbf{a}\in E(k) : \psi(\mathbf{a})=\infty \}.
$$
Let $NU(k)$ to be the set of all vectors $\mathbf{a}$ in $U(k)$ such that $\mathbf{b}\not\in \bigcup_{i=1}^{k-1}U(i)$ for every $\mathbf{b}\in \mathcal{N}$ satisfying $\mathbf{b}\prec \mathbf{a}$.
Let $A(k)=E(k)-U(k)$ and
$$
E(k+1)=\bigcup_{\mathbf{a}\in A(k)}\left\{ \mathbf{a}*g : g\in \mathcal E(\mathbf{a})\right\}.
$$
This algorithm terminates once $A(k)=\emptyset$.
Note that for $\mathbf{a}\in \mathcal{N}(k)$, a $k$-ary $m$-gonal form $(\mathcal{GP}_m,\mathbf{a})$ is new tight $\mathcal T(n)$-universal if and only if $\mathbf{a}\in NU(k)$.

\begin{lem} \label{lemesc}
Under the notations given above, there is a positive integer $l=l(n)$ depending on $n$ such that $A(l)=\emptyset$.
\end{lem}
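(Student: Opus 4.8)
The plan is to show that the recursively defined family of sets $E(k)$ cannot grow forever, by bounding the entries of any vector that can possibly survive to stage $k$ and then bounding $k$ itself. First I would establish the key monotonicity fact that if $\mathbf a \preceq \mathbf b$ then $R'(\mathbf a) \subseteq R'(\mathbf b)$, hence $\psi(\mathbf a) \ge \psi(\mathbf b)$ is \emph{false} in general but $\Psi(\mathbf b) \subseteq \Psi(\mathbf a)$, so that once a prefix becomes universal all its extensions are universal; this is exactly why $U(k)$ and the stopping rule make sense. The substantive point is that a vector $\mathbf a = (a_1,\dots,a_k) \in E(k)$ is built by the rule $a_{i+1} \in \mathcal E(a_1,\dots,a_i)$, and $\mathcal E(\mathbf a) \subseteq \{n, n+1,\dots\} \cap [\,n,\ \psi(\mathbf a)\,]$; so every entry $a_{i+1}$ is at most $\psi(a_1,\dots,a_i)$, the smallest element of $\mathcal T(n)$ not yet represented.

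Next I would prove a uniform upper bound: there is a constant $C = C(n)$ such that whenever $p_8(a_1,\dots,a_i)$ is \emph{not} already $\mathcal T(n)$-universal, one has $\psi(a_1,\dots,a_i) \le C$. This is the heart of the matter. The idea is that $P_8(x) = 3x^2 - 2x$ takes the values $0, 1, 8, 21, 40, 65,\dots$, and an octagonal form $p_8(\mathbf a)$ with $a_1 = n$ automatically represents $n$ (take $x_1 = \pm 1$) and more generally represents $n \cdot \{0,1,8,21,\dots\}$ plus contributions from the other variables. Using the fact that the ternary (or quaternary) sub-sums already cover a full arithmetic-progression-like set of residues for large enough values — an argument of the same flavour as the local-global and escalator bounds in \cite{JK}, \cite{MG}, \cite{JO} — one shows that if the form fails to represent \emph{some} integer $\ge n$, then it fails to represent one that is bounded by an absolute constant depending only on $n$. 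Concretely, I expect $C(n)$ can be taken on the order of a small explicit polynomial in $n$ (consistent with the bounds $18, 36, 28, 2n$ appearing in Theorems~\ref{thm2}--\ref{thm5}), so that $\psi(\mathbf a) \le C(n)$ for every non-universal escalator vector $\mathbf a$.

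Granting this bound, every entry of every vector in $\bigcup_k E(k)$ lies in the finite set $\{n, n+1, \dots, C(n)\}$, so the entries are drawn from an alphabet of size at most $C(n) - n + 1$. It then remains to bound the \emph{length} $k$. For this I would argue that the sequence of partial sums $\psi(a_1), \psi(a_1,a_2), \psi(a_1,a_2,a_3),\dots$ is strictly decreasing along any chain in the escalator tree that has not yet terminated: adding a new coefficient $a_{i+1} = \psi(a_1,\dots,a_i)$ forces that previously-missing value to become represented, so $\psi$ strictly drops (or jumps to $\infty$, ending the branch). Since $\psi$ takes values in the finite set $\{n, n+1, \dots, C(n)\} \cup \{\infty\}$, any branch terminates after at most $C(n) - n + 1$ steps, whence $A(l) = \emptyset$ for $l = l(n) := C(n) - n + 2$.

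The main obstacle is the uniform bound $\psi(\mathbf a) \le C(n)$ for non-universal $\mathbf a$: this requires the genuine arithmetic input about which integers are represented by short octagonal forms — a combination of the local representability of $P_8$ modulo small prime powers, a bound on the exceptional integers for the relevant ternary/quaternary forms (via the theory of ternary quadratic forms and their spinor exceptions, or the explicit computations underlying \cite{JO}), and care at the small values $\le 2n$. Everything after that bound is the elementary finiteness argument sketched above.
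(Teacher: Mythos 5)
The paper offers no proof of this lemma---it is quoted verbatim from \cite[Lemma 3.2]{JK}---so there is nothing in the text to compare your argument against; I can only judge the proposal on its own. Your architecture (bound the truants uniformly, deduce that coefficients and branch lengths are bounded, conclude $A(l)=\emptyset$) is the standard escalation-termination scheme and is the right shape. One correction first: the monotonicity of the truant goes the other way. If $g\in\mathcal E(\mathbf a)$ then either $g=\psi(\mathbf a)$, or $n\le g\le\psi(\mathbf a)-n$, in which case $\psi(\mathbf a)-g$ lies in $\mathcal T(n)$, is smaller than $\psi(\mathbf a)$, and hence is already in $R'(\mathbf a)$; either way $p_8(\mathbf a*g)$ represents $\psi(\mathbf a)$, so $\psi(\mathbf a*g)>\psi(\mathbf a)$. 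The truant strictly \emph{increases} along a branch --- your own parenthetical ``forces that previously-missing value to become represented'' proves exactly this --- so the sentence claiming $\psi$ ``strictly drops'' is false as written, even though your final count $l(n)=C(n)-n+2$ is the one that matches the increasing version.

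The genuine gap is the uniform bound ``$\psi(\mathbf a)\le C(n)$ for every non-universal escalator vector $\mathbf a$.'' You correctly identify this as the heart of the matter and then do not prove it, offering only a gesture toward local-global principles and ternary spinor theory. But this claim \emph{is} the lemma: without it, nothing excludes a branch along which the truants grow without bound (and with them the admissible new coefficients, since $a_{k+1}\le\psi(a_1,\dots,a_k)$), so the tree need never die. Everything you do prove is elementary bookkeeping around that missing input. A complete argument must actually produce $C(n)$, for instance by showing that every escalator vector of length $n+1$ begins with $(n,n,n+1,\dots,2n-1)$ or $(n,n+1,\dots,2n)$ (as in Lemma \ref{lembase}(v)) and that a form with that many variables, all coefficients lying in $[n,2n]$, represents every sufficiently large integer with an explicit threshold --- a Cauchy-type theorem for sums of generalized octagonal (or $m$-gonal) numbers --- or some equivalent device. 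As it stands the proposal reduces the lemma to an unproved assertion that is at least as hard as the lemma itself.
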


\begin{proof}
See \cite[Lemma 3.2]{JK}.
\end{proof}

Let $l=l(n)$ denote the smallest positive integer satisfying $A(l)=\emptyset$.
Define a set $C=C(n)$ to be
$$
C(n)=\{n\} \cup \bigcup_{k=1}^{l-1}\{ \psi(\mathbf{a}) : \mathbf{a}\in A(k)\}.
$$

\begin{lem} \label{lemcv}
For $\mathbf{a}\in \mathcal{N}$, under the notations given above, $R'(\mathbf{a})=\mathcal T(n)$ if and only if $R'(\mathbf{a})\cap \{1,2,\dots,n-1\}=\emptyset$ and $C(n)\subset R'(\mathbf{a})$.
\end{lem}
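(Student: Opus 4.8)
The plan is to establish the two implications separately; the forward one is formal while the reverse one carries all the content. Suppose first that $R'(\mathbf a)=\mathcal T(n)$. Every element of $\mathcal T(n)$ is at least $n$, so $R'(\mathbf a)\cap\{1,2,\dots,n-1\}=\emptyset$ at once; and $n\in\mathcal T(n)$, while for every $k$ and every $\mathbf c\in A(k)=E(k)-U(k)$ we have $\psi(\mathbf c)\neq\infty$ by the definition of $U(k)$, hence $\psi(\mathbf c)=\min\Psi(\mathbf c)\in\mathcal T(n)$ since $\Psi(\mathbf c)\subseteq\mathcal T(n)$. Therefore $C(n)\subseteq\mathcal T(n)=R'(\mathbf a)$, which proves this direction.

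For the reverse implication, assume $R'(\mathbf a)\cap\{1,\dots,n-1\}=\emptyset$ and $C(n)\subseteq R'(\mathbf a)$. I would first record that $P_8(x)=3x^2-2x\ge 0$ for every $x\in\z$, with equality only at $x=0$, so that $R(\mathbf a)\subseteq\z_{\ge 0}$ and $R'(\mathbf a)\subseteq\z_{>0}$; combined with the hypothesis this gives $R'(\mathbf a)\subseteq\mathcal T(n)$, and it remains to prove $\mathcal T(n)\subseteq R'(\mathbf a)$. The strategy is to extract from $\mathbf a$ a subsequence $\mathbf b\preceq\mathbf a$ lying in $U(k)$ for some $k$: for such a $\mathbf b$ one has $\psi(\mathbf b)=\infty$, i.e. $\mathcal T(n)\subseteq R'(\mathbf b)$, and since $\mathbf b\preceq\mathbf a$ implies $R'(\mathbf b)\subseteq R'(\mathbf a)$ (assign $0$ to the variables of $\mathbf a$ outside $\mathbf b$), we get $\mathcal T(n)\subseteq R'(\mathbf a)$, hence equality.

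To produce $\mathbf b$ I would build a chain $\mathbf b^{(1)}\prec\mathbf b^{(2)}\prec\cdots$ with $\mathbf b^{(k)}\in E(k)$ and $\mathbf b^{(k)}\preceq\mathbf a$, halting at the first index with $\mathbf b^{(k)}\in U(k)$. For the start, note $n\in C(n)\subseteq R'(\mathbf a)$; since each coefficient $a_i=a_iP_8(1)$ lies in $R'(\mathbf a)\subseteq\mathcal T(n)$, whence $a_i\ge n$, any representation of $n$ by $p_8(\mathbf a)$ has a single nonzero summand $a_jP_8(u_j)=n$, forcing $a_j=n$; thus $n$ is a coordinate of $\mathbf a$ and $\mathbf b^{(1)}:=(n)\in E(1)$ is a subsequence of $\mathbf a$. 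For the inductive step, given $\mathbf b^{(k)}\in E(k)$ with $\mathbf b^{(k)}\preceq\mathbf a$: if $\mathbf b^{(k)}\in U(k)$ we halt; otherwise $\mathbf b^{(k)}\in A(k)$, so $t:=\psi(\mathbf b^{(k)})$ is finite and, since $A(k)\neq\emptyset$ forces $k<l$, lies in $C(n)\subseteq R'(\mathbf a)$. Write the coordinates of $\mathbf b^{(k)}$ as the sub-multiset $\{a_i:i\in I\}$ of $\mathbf a=(a_1,\dots,a_m)$, put $J=\{1,\dots,m\}\setminus I$, fix a representation $t=\sum_{i=1}^{m}a_iP_8(u_i)$, and observe that $u_{j_0}\neq 0$ for some $j_0\in J$: otherwise $t\in R(\mathbf b^{(k)})$ and hence $t\in R'(\mathbf b^{(k)})$, contradicting $t=\psi(\mathbf b^{(k)})\notin R'(\mathbf b^{(k)})$. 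Granting the key claim that $g:=a_{j_0}\in\mathcal E(\mathbf b^{(k)})$, the vector $\mathbf b^{(k+1)}:=\mathbf b^{(k)}*g$ lies in $E(k+1)$ and is again a subsequence of $\mathbf a$ (its coordinates form the sub-multiset indexed by $I\cup\{j_0\}$), so the chain continues. By Lemma~\ref{lemesc} the construction terminates — it cannot reach a vector $\mathbf b^{(l)}\in A(l)$ because $A(l)=\emptyset$ — so it halts at some $\mathbf b^{(k)}\in E(k)\setminus A(k)=U(k)$, which is the desired $\mathbf b$.

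The heart of the proof, which I expect to be the main obstacle, is the key claim that $g=a_{j_0}\in\mathcal E(\mathbf b^{(k)})=\{g\in\z:n\le g\le t-n\}\cup\{t\}$. I would derive it from three observations: $a_{j_0}\ge n$, as above; on $\z$ the polynomial $P_8$ takes only the values $0,1,5,8,16,21,\dots$, so $P_8(u_{j_0})=1$ or $P_8(u_{j_0})\ge 5$; and the ``tail'' $s:=t-a_{j_0}P_8(u_{j_0})=\sum_{i\neq j_0}a_iP_8(u_i)$ is a nonnegative element of $R(\mathbf a)$, so $s=0$ or $s\ge n$. If $s\ge n$, then $a_{j_0}\le a_{j_0}P_8(u_{j_0})=t-s\le t-n$, so $n\le a_{j_0}\le t-n$. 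If $s=0$ and $P_8(u_{j_0})=1$, then $a_{j_0}=t$. If $s=0$ and $P_8(u_{j_0})\ge 5$, then $a_{j_0}\le t/5$, which with $a_{j_0}\ge n$ forces $t\ge 5n$ and hence $t/5\le t-n$, giving again $n\le a_{j_0}\le t-n$. In every case $g\in\mathcal E(\mathbf b^{(k)})$; in particular, when $t<2n$ the interval $\{g\in\z:n\le g\le t-n\}$ is empty and $\mathcal E(\mathbf b^{(k)})=\{t\}$, but the inequalities above then force $s=0$, $P_8(u_{j_0})=1$ and $a_{j_0}=t$, so the claim still holds. Everything else is routine bookkeeping with subsequences, so this case analysis is the only substantive point.
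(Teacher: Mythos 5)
Your proof is correct: the forward direction is immediate, and in the reverse direction your chain construction together with the key claim that the new coefficient $a_{j_0}$ lands in $\mathcal E(\mathbf b^{(k)})$ (using $a_i\ge n$, the value set $\{0,1,5,8,\dots\}$ of $P_8$, and the tail $s$ being $0$ or $\ge n$) is exactly the property the set $\mathcal E$ was designed to capture. The paper itself gives no argument here --- it only cites \cite[Theorem 3.3]{JK} --- so there is nothing in-paper to compare against, but your write-up is a sound, self-contained reconstruction of the intended proof.
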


\begin{proof}
See \cite[Theorem 3.3]{JK}.
\end{proof}

\begin{lem} \label{lembase}
Let $n$ be an integer greater than 1.
Under the notations given above, we have the following;
\begin{enumerate} [(i)]
\item $\{n,n+1,n+2,\dots,2n\} \subseteq C(n)$;
\item $E(k)=\{(n,n+1,n+2,\dots,n+k-1)$ for $k=1,2,\dots,n$;
\item $U(k)=\emptyset$ for $k=1,2,\dots,n$;
\item $A(k)=E(k)$ for $k=1,2,\dots,n$;
\item $E(n+1)=\{(n,n,n+1,n+2,\dots,2n-1),(n,n+1,n+2,\dots,2n-1,2n)\}$.
\end{enumerate}
\end{lem}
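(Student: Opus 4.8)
The plan is to reduce the whole statement to one representation-theoretic computation: for $1\le k\le n$ set $\mathbf a_k:=(n,n+1,n+2,\dots,n+k-1)\in\mathcal N(k)$ and determine $\psi(\mathbf a_k)$. Granting this, parts (ii)--(v) fall out of the recursive definition of $E(k)$ by an induction on $k$, and (i) is then read off from the definition of $C(n)$.

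The inputs from the arithmetic of octagonal numbers are elementary: $P_8(x)=x(3x-2)\ge 0$ for all $x\in\z$, with equality only at $x=0$, and $P_8(x)\ge 5$ whenever $x\notin\{0,1\}$ (equivalently, none of $2,3,4$ is a generalized octagonal number). In particular every nonzero element of $R(\mathbf a)$ is a sum of terms $a_iP_8(x_i)$, each of which is $0$ or $\ge a_i$. The key claim is
\[
R'(\mathbf a_k)\cap\{n,n+1,\dots,2n\}=\{n,n+1,\dots,n+k-1\}\qquad(1\le k\le n).
\]
The inclusion $\supseteq$ is seen by setting one variable to $1$ and the rest to $0$. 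For $\subseteq$, take $x_1,\dots,x_k\in\z$ with $0<s:=\sum_{j=1}^{k}(n+j-1)P_8(x_j)\le 2n$. If at least two of the $x_j$ were nonzero, then $s$ would be at least the sum of the two smallest coefficients, namely $n+(n+1)=2n+1>2n$; so exactly one $x_j$ is nonzero. If that $x_j\notin\{0,1\}$ then $P_8(x_j)\ge 5$ and $s\ge 5n>2n$; so $x_j=1$ and $s=n+j-1\in\{n,\dots,n+k-1\}$, proving the claim. Consequently, for $k\le n-1$ the integer $n+k$ is the least element of $\mathcal T(n)$ not represented by $p_8(\mathbf a_k)$, so $\psi(\mathbf a_k)=n+k$; and for $k=n$ every integer from $n$ to $2n-1$ is represented while $2n$ is not, so $\psi(\mathbf a_n)=2n$. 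In particular $\psi(\mathbf a_k)<\infty$ for all $1\le k\le n$.

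Now I would prove (ii), (iii), (iv) simultaneously by induction on $k=1,\dots,n$. The base case is $E(1)=\{(n)\}=\{\mathbf a_1\}$ by definition, and $\psi(\mathbf a_1)<\infty$ forces $U(1)=\emptyset$ and $A(1)=E(1)$. For the step, suppose $E(k)=\{\mathbf a_k\}$ with $k\le n$; then $\psi(\mathbf a_k)<\infty$ gives $U(k)=\emptyset$ and $A(k)=E(k)=\{\mathbf a_k\}$. If $k\le n-1$ then $\psi(\mathbf a_k)=n+k<2n$, so $\mathcal E(\mathbf a_k)=\{n+k\}$ by the remark following the definition of $\mathcal E$, and since inserting $n+k$ into the ascending tuple $(n,\dots,n+k-1)$ appends it at the end we get $E(k+1)=\{\mathbf a_k*(n+k)\}=\{\mathbf a_{k+1}\}$. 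This completes the induction and proves (ii)--(iv). For (v): by (iv), $A(n)=\{\mathbf a_n\}$ and $\psi(\mathbf a_n)=2n$, whence $\mathcal E(\mathbf a_n)=\{g\in\z:n\le g\le n\}\cup\{2n\}=\{n,2n\}$, so $E(n+1)=\{\mathbf a_n*n,\ \mathbf a_n*2n\}=\{(n,n,n+1,\dots,2n-1),\ (n,n+1,\dots,2n-1,2n)\}$. Finally, for (i): since $A(k)=E(k)\neq\emptyset$ for $k=1,\dots,n$, the least index $l=l(n)$ with $A(l)=\emptyset$ satisfies $l\ge n+1$, so $\mathbf a_1,\dots,\mathbf a_n$ all contribute to $C(n)$ and
\[
C(n)\supseteq\{n\}\cup\{\psi(\mathbf a_k):1\le k\le n\}=\{n\}\cup\{n+1,n+2,\dots,2n\}=\{n,n+1,\dots,2n\}.
\]

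I expect the only real obstacle --- more a matter of care than of depth --- to be the $\subseteq$ direction of the key claim: one must correctly exploit that $2,3,4$ are not octagonal numbers and that switching on a second variable already pushes the sum past $2n$, so that the values of $p_8(\mathbf a_k)$ in the window $\{n,\dots,2n\}$ are exactly the $k$ single-variable values $n,n+1,\dots,n+k-1$. The remaining content is bookkeeping with the recursion defining $E(k)$, $U(k)$, $A(k)$ and $C(n)$.
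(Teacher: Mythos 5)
Your proposal is correct. Note, however, that the paper does not actually prove Lemma \ref{lembase}: it simply cites \cite[Proposition 3.5]{JK}, where the statement is established in the general setting of $m$-gonal forms. Your argument is a self-contained proof specialized to the octagonal case, and it is sound: the key claim $R'(\mathbf{a}_k)\cap\{n,\dots,2n\}=\{n,\dots,n+k-1\}$ is exactly the computation needed, and your two observations --- that turning on two variables already gives at least $n+(n+1)=2n+1$, and that a single variable with $x\notin\{0,1\}$ gives at least $5n>2n$ because the smallest nonzero values of $P_8$ are $1$ and $5$ --- correctly pin down $\psi(\mathbf{a}_k)=n+k$ for $k\le n-1$ and $\psi(\mathbf{a}_n)=2n$. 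The bookkeeping with $\mathcal E(\mathbf{a}_k)$ (in particular that $\mathcal E(\mathbf{a}_k)=\{n+k\}$ when $\psi<2n$, and $\mathcal E(\mathbf{a}_n)=\{n,2n\}$ when $\psi=2n$) and with the insertion operation $*$ is also handled correctly, as is the deduction of (i) from $l(n)\ge n+1$. What the citation buys the authors is uniformity over all $m$ (where the role of $5=P_8(-1)$ is played by $P_m(-1)=m-3$, subject to suitable hypotheses on $m$ and $n$); what your direct argument buys is a transparent, elementary verification in the only case the paper needs.
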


\begin{proof}
See \cite[Proposition 3.5]{JK}.
\end{proof}

We adopt the geometric language of quadratic spaces and lattices, generally following \cite{OM}.
Let $R$ be the ring of rational integers $\z$ or the ring of $p$-adic integers $\z_p$ for a prime $p$, and let $F$ be the field of fractions of $R$.
An $R$-lattice $L=\z \mathbf{v}_1+\z \mathbf{v}_2+\cdots+\z \mathbf{v}_k$ is a free $R$-submodule of a quadratic space $(W,Q)$, where $W$ is a $k$-dimensional vector space over $F$ and $Q$ is a quadratic map from $W$ to $F$.
The symmetric bilinear map $B:W\times W\to F$ associated to $Q$ is given by
$$
B(\mathbf{x},\mathbf{y})=\frac{1}{2}\{Q(\mathbf{x}+\mathbf{y})-Q(\mathbf{x})-Q(\mathbf{y})\}.
$$
The matrix $B(\mathbf{v}_i,\mathbf{v}_j)_{1\le i,j\le k}$, denoted $M_L$, is called the Gram matrix of $L$ in the basis $\{ \mathbf{v}_1,\mathbf{v}_2,\dots,\mathbf{v}_k\}$.
Throughout the article, we always assume that a $\z$-lattice $L$ is positive definite in the sense that a Gram matrix $M_L$ of $L$ is positive definite.
The corresponding quadratic form $f_L$ is defined by
$$
f_L(x_1,x_2,\dots,x_k)=\sum_{1\le i,j\le k} B(\mathbf{v}_i,\mathbf{v}_j)x_ix_j.
$$
We abuse the notation and write $L=A$, when $A$ is the Gram matrix of $L$ in a $\z$-basis for $L$.
The genus of $L$ (spinor genus of $L$) will be denoted by $\gen(L)$ ($\spn(L)$, respectively).
We abuse the notation when we explicitly write down the isometry classes in the (spinor) genus of $L$ and write, e.g.,
$$
\gen(L)=\{L,M\},
$$
when the class number of $L$ is two and the genus mate is $M$.
A diagonal $k\times k$-matrix $A$ with diagonal entries $a_1,a_2,\dots,a_k$ will be simply denoted by $\langle a_1,a_2,\dots,a_k\rangle$.
Both a $\z$-lattice $L$ having $A$ as its Gram matrix and the corresponding diagonal quadratic form $a_1x_1^2+a_2x_2^2+\cdots+a_kx_k^2$ will also be denoted by $\langle a_1,a_2,\dots,a_k\rangle$.
Let $\nu$ be a nonnegative integer.
For a $\z$-lattice $L$, we write $\nu \ra L$ when $\nu$ is represented by $L$.
For a diagonal $\z$-lattice $\langle b_1,b_2,\dots,b_k\rangle$, we write
$$
\nu \rat \langle b_1,b_2,\dots,b_k\rangle
$$
if there is a vector $(y_1,y_2,\dots,y_k)\in \z^k$ with $(y_1y_2\cdots y_k,3)=1$ such that
$$
b_1y_1^2+b_2y_2^2+\cdots+b_ky_k^2=\nu.
$$
Under these notations, for a nonnegative integer $u$, one may easily see that
$$
u\ra p_8(a_1,a_2,\dots,a_k)\ \ \text{if and only if}\ \ 3u+a_1+a_2+\cdots+a_k\rat \langle a_1,a_2,\dots,a_k\rangle.
$$
Any unexplained notation and terminologies can be found in \cite{Ki} or \cite{OM}.

\section{Representations by octagonal forms}
\begin{lem}[Jones]\label{lemJones}
Let $\nu$ be a positive integer which is a multiple of 3.
If the Diophantine equation 
$$
x^2+2y^2=\nu
$$
has an integer solution $(x,y)$, then it also has an integer solution $(x_0,y_0)$ satisfying $x_0y_0\not\equiv 0\Mod 3.$
\end{lem}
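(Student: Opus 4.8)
The plan is to recast the equation in the ring $\mathcal O=\z[\sqrt{-2}]$, the full ring of integers of $\q(\sqrt{-2})$, which is a principal ideal domain with unit group $\{\pm1\}$ and norm form $N(x+y\sqrt{-2})=x^2+2y^2$. I would begin with a reduction of the target: since $2\equiv-1\Mod 3$, every solution of $x^2+2y^2=\nu$ satisfies $x^2-y^2\equiv\nu\equiv0\Mod 3$, so that $3\mid x$ if and only if $3\mid y$; hence a solution with $x_0y_0\not\equiv0\Mod 3$ is precisely a solution in which $x_0,y_0$ are not both divisible by $3$, and it suffices to produce one of the latter kind.

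Next I would use the arithmetic of $\mathcal O$. Write $\nu=N(\alpha_0)$ for some $\alpha_0\in\mathcal O$. Because $-2$ is a quadratic residue modulo $3$, the prime $3$ splits, say $(3)=\mathfrak p\bar{\mathfrak p}$ with $\mathfrak p=(1+\sqrt{-2})$ and $\bar{\mathfrak p}=(1-\sqrt{-2})$ distinct primes of norm $3$; an element of $\mathcal O$ lies in $3\mathcal O$ exactly when it is divisible by both $\mathfrak p$ and $\bar{\mathfrak p}$, i.e. exactly when both of its coordinates are divisible by $3$. Put $i=v_{\mathfrak p}(\alpha_0)$ and $j=v_{\bar{\mathfrak p}}(\alpha_0)$; replacing $\sqrt{-2}$ by $-\sqrt{-2}$ if necessary, I may assume $i\le j$. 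The key device is the element $\epsilon=\dfrac{(1-\sqrt{-2})^2}{3}=\dfrac{-1-2\sqrt{-2}}{3}\in\q(\sqrt{-2})$, for which $N(\epsilon)=1$ and $(\epsilon)=\mathfrak p^{-1}\bar{\mathfrak p}$, so that $v_{\mathfrak p}(\epsilon)=-1$, $v_{\bar{\mathfrak p}}(\epsilon)=1$, and $\epsilon$ is a unit at every other prime.

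Then I would set $\alpha=\alpha_0\epsilon^{\,i}$. One has $v_{\mathfrak p}(\alpha)=i-i=0$, $v_{\bar{\mathfrak p}}(\alpha)=i+j\ge0$, and $v_{\mathfrak q}(\alpha)=v_{\mathfrak q}(\alpha_0)\ge0$ for every other prime $\mathfrak q$, so $\alpha\in\mathcal O$; moreover $N(\alpha)=N(\alpha_0)N(\epsilon)^i=\nu$, and $\mathfrak p\nmid\alpha$ forces $3\nmid\alpha$. Writing $\alpha=x_0+y_0\sqrt{-2}$ gives $x_0^2+2y_0^2=\nu$ with $x_0,y_0$ not both divisible by $3$, hence $x_0y_0\not\equiv0\Mod 3$ by the initial reduction.

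The one point that needs care is the integrality of $\alpha=\alpha_0\epsilon^{\,i}$: this is where one must track the valuations at the two primes above $3$ and check that taking the exponent to be the smaller of $i$ and $j$ leaves every local valuation nonnegative; the rest is bookkeeping. An alternative, entirely elementary route avoids $\mathcal O$ altogether: starting from the composition identities $9(a^2+2b^2)=(a-4b)^2+2(2a+b)^2=(a+4b)^2+2(2a-b)^2$, a short strong induction on the multiples of $3$ represented by $x^2+2y^2$ shows that a representation with $x,y$ both divisible by $3$ can always be traded for one with $x,y$ not both divisible by $3$.
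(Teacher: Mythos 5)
Your argument is correct, but note that the paper does not actually prove this lemma: it simply cites Jones's 1928 thesis, so any self-contained proof is necessarily "a different route." Your initial reduction is right ($x^2+2y^2\equiv x^2-y^2\Mod 3$, so for $3\mid\nu$ either both or neither of $x,y$ is divisible by $3$), and the valuation computation goes through: with $\mathfrak p=(1+\sqrt{-2})$, $\bar{\mathfrak p}=(1-\sqrt{-2})$ and $\epsilon=(1-\sqrt{-2})^2/3$ one indeed has $N(\epsilon)=1$, $(\epsilon)=\mathfrak p^{-1}\bar{\mathfrak p}$, so $\alpha=\alpha_0\epsilon^{\,i}$ with $i=\min\{v_{\mathfrak p}(\alpha_0),v_{\bar{\mathfrak p}}(\alpha_0)\}$ is integral, has norm $\nu$, and is prime to $\mathfrak p$, hence prime to $3$; this is a clean and complete proof (you do not even need that $\z[\sqrt{-2}]$ is a PID --- only the ideal-theoretic bookkeeping). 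Your alternative via the composition identities $9(a^2+2b^2)=(a\mp 4b)^2+2(2a\pm b)^2$ is essentially Jones's original elementary descent and also works: if a representation of $\nu$ has both entries divisible by $3$, pass to $\nu/9$, apply strong induction when $3\mid\nu/9$, and check that for any representation $(a,b)$ of $\nu/9$ with not both entries divisible by $3$ at least one of the two identities produces entries prime to $3$ (the sign choice handles the case $a\equiv b\Mod 3$). The only thing I would ask you to tighten is that last case analysis, which you leave as an assertion; it is a three-line check but it is where the sign choice actually matters. It is also worth observing that this descent is the same mechanism the paper uses for $\langle 1,1,1\rangle$ in Lemma 3.4, there packaged as the positivity of $r(9\nu',\langle 1,1,1\rangle)-r(\nu',\langle 1,1,1\rangle)$.
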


\begin{proof}
See \cite{J}.
\end{proof}

Throughout this section, we use the following notations.
For $r=1,2,3,\dots$, define $H_r$ to be the set of all nonnegative integers less than $r$, i.e.,
$$
H_r=\{0,1,2,\dots,r-1\}.
$$
The map which assigns an integer $x$ to the remainder of $x$ divided by $r$ will be denoted by $\eta_r$.
Obviously, $\eta_r(x)$ is the smallest nonnegative integer congruent to $x$ modulo $r$.
For $g,h\in \n$, the map which assigns a matrix $(a_{ij})\in \mathcal{M}_{g,h}(\z)$ to the matrix $(\eta_r(a_{ij}))\in \mathcal{M}_{g,h}(H_r)$ will be denoted by $\eta_r$ also.
For a matrix $T=(t_{ij})\in \mathcal{M}_{g,h}(\z)$ and an integer $l$ with $1\le l\le g$, we denote by $[T]_l$ the $l$-th row vector $(t_{l1},t_{l2},\dots,t_{lh})$ of $T$.

The following method on the representation of an arithmetic progression by ternary quadratic forms have been developed through \cite{O1} and \cite{O2}.
We refer the reader \cite[Theorem 2.3]{JOS} who is interested in the most recent form of the method.
For a ternary $\z$-lattice $L$, we identify $L$ with its Gram matrix $M_L$ and we will describe the method in the level of matrices.
Let $M,N\in \mathcal{M}_3(\z)$ be symmetric matrices, $d$ a positive integer, and $a\in H_d$.
We define a set $S_{d,a}$ by
$$
S_{d,a}=\{ du+a : u\in \z_{\ge 0}\},
$$
and define
\begin{align*}
R(M,N,d)&=\left\{ T\in \mathcal{M}_3(\z) : {}^tTMT=d^2N \right\},\\
R(N,d,a)&=\left\{ v\in H_d^3 : \eta_r({}^tvNv)=a\right\},\\
R_M(N,d,a)&=\left\{ v\in R(N,d,a) : \eta_r(Tv)=\mathbf{0}\in H_d^3\ \ \text{for some}\ \ T\in R(M,N,d)\right\}.
\end{align*}
If $R(N,d,a)-R_M(N,d,a)=\emptyset$, then we write
$$
N\prec_{d,a}M.
$$

\begin{lem} \label{lemnobad}
Under the notations given above, if $N\prec_{d,a}M$, then
$$
S_{d,a}\cap Q(N)\subset Q(M).
$$
\end{lem}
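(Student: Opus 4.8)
The plan is to start from a representation of $\nu$ by $N$, reduce the representing vector modulo $d$ so that it lands inside $H_d^3$, invoke the hypothesis $N\prec_{d,a}M$ on the resulting residue vector to obtain an integral matrix $T$ with ${}^tTMT=d^2N$ annihilating that residue vector modulo $d$, and then transfer the representation over to $M$ by dividing the transformed vector by $d$.

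In detail, suppose $\nu\in S_{d,a}\cap Q(N)$, so that $\nu\equiv a\Mod{d}$ and ${}^t\mathbf xN\mathbf x=\nu$ for some $\mathbf x\in\z^3$. I would set $v=\eta_d(\mathbf x)\in H_d^3$, so $\mathbf x\equiv v\Mod{d}$ componentwise. Since $N$ has integer entries, this congruence propagates to ${}^tvNv\equiv{}^t\mathbf xN\mathbf x=\nu\equiv a\Mod{d}$, and hence $v\in R(N,d,a)$. By definition, $N\prec_{d,a}M$ means $R(N,d,a)-R_M(N,d,a)=\emptyset$, so $v\in R_M(N,d,a)$; thus there is some $T\in R(M,N,d)$, i.e. ${}^tTMT=d^2N$ with $T\in\mathcal M_3(\z)$, satisfying $\eta_d(Tv)=\mathbf 0$, that is, $Tv\equiv\mathbf 0\Mod{d}$.

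Next, using $\mathbf x\equiv v\Mod{d}$ together with the integrality of $T$, I would deduce $T\mathbf x\equiv Tv\equiv\mathbf 0\Mod{d}$, so that $\mathbf y:=\frac1d\,T\mathbf x$ is a vector in $\z^3$. Then the computation
$$
{}^t\mathbf yM\mathbf y=\frac1{d^2}\,{}^t\mathbf x\,\big({}^tTMT\big)\,\mathbf x=\frac1{d^2}\,{}^t\mathbf x\,(d^2N)\,\mathbf x={}^t\mathbf xN\mathbf x=\nu
$$
shows $\nu\in Q(M)$. Since $\nu$ was an arbitrary element of $S_{d,a}\cap Q(N)$, this gives $S_{d,a}\cap Q(N)\subset Q(M)$.

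I do not anticipate a serious obstacle here: essentially all of the content has been packed into the definition of $\prec_{d,a}$, and what remains is a short verification. The two points deserving a little care are checking that the reduced vector $v$ really does satisfy the congruence condition defining $R(N,d,a)$ — which is exactly where the hypothesis $\nu\in S_{d,a}$, i.e. $\nu\equiv a\Mod{d}$, is used — and checking that $T\mathbf x$ is divisible by $d$ so that $\mathbf y$ has integer coordinates; both follow immediately from the integrality of $N$ and $T$, so the argument should be entirely routine.
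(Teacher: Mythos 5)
Your proof is correct and complete; it is exactly the standard argument behind this lemma, which the paper itself does not reproduce but simply delegates to the cited reference \cite[Theorem 2.3]{O1}. The two points you flag (that $\eta_d(\mathbf x)$ lands in $R(N,d,a)$ because $\nu\equiv a\Mod d$, and that $T\mathbf x\equiv Tv\equiv\mathbf 0\Mod d$ makes $\frac1d T\mathbf x$ integral) are indeed the only places where anything needs checking, and both are handled correctly.
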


\begin{proof}
See \cite[Theorem 2.3]{O1}.
\end{proof}

The following lemma is a matrix version of \cite[Corollary 2.2]{O2}.

\begin{lem} \label{lembad}
Under the notations given above, assume that there is a partition $R(N,d,a)-R_M(N,d,a)=P_1\cup P_2\cup \cdots \cup P_g$ satisfying the following properties; for each $i=1,2,\dots,g$, there is a matrix $T_i\in R(N,N,d)$ such that
\begin{enumerate} [(i)]
\item the matrix $\dfrac{1}{d}T_i$ is of infinite order;
\item $\left\{ \eta_d\left( \dfrac{1}{d}T_iv+x\cdot {}^t[T_i]_1+y\cdot {}^t[T_i]_2+z\cdot {}^t[T_i]_3\right) \right\} \subseteq P_i\cup R_M(N,d,a)$ for every $v\in P_j$ and every ${}^t(x,y,z)\in H_d^3$.
\end{enumerate}
Then we have
$$
S_{d,a}\cup Q(N)-\bigcup_{j=1}^g \left\{ {}^tw_iNw_i\cdot h^2 : h\in \n \right\} \subset Q(M),
$$
where $w_i={}^t(w_{i1},w_{i2},w_{i3})$ is a vector in $\z^3$ with $\gcd(w_{i1},w_{i2},w_{i3})=1$ such that $\dfrac{1}{d}T_iw_i=\det \left( \dfrac{1}{d}T_i\right) w_i$ for $i=1,2,\dots,g$.
\end{lem}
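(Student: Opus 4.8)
The plan is to carry out, entirely at the level of the matrices $N$, $M$ and the matrices $T_i$, the orbit argument that proves \cite[Corollary 2.2]{O2}. Fix $\nu$ in the set on the left-hand side, so $\nu\in S_{d,a}\cap Q(N)$ and $\nu\notin\bigcup_i\{{}^tw_iNw_i\cdot h^2:h\in\n\}$; we must show $\nu\in Q(M)$, and we may assume $\nu>0$. Choose $\mathbf{x}_0\in\z^3$ with ${}^t\mathbf{x}_0N\mathbf{x}_0=\nu$ and set $v_0=\eta_d(\mathbf{x}_0)\in R(N,d,a)$. If $v_0\in R_M(N,d,a)$ we are done by the mechanism already used in the proof of Lemma \ref{lemnobad}: picking $T\in R(M,N,d)$ with $\eta_d(Tv_0)=\mathbf{0}$, integrality of $T$ together with $\mathbf{x}_0\equiv v_0\Mod d$ forces $T\mathbf{x}_0\in d\z^3$, so $\mathbf{y}=\frac{1}{d}T\mathbf{x}_0\in\z^3$ satisfies ${}^t\mathbf{y}M\mathbf{y}=\frac{1}{d^2}{}^t\mathbf{x}_0({}^tTMT)\mathbf{x}_0={}^t\mathbf{x}_0N\mathbf{x}_0=\nu$. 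This base case will also be used to terminate the main argument.

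So assume $v_0$ lies in $P_i$, the unique block of the partition $R(N,d,a)-R_M(N,d,a)=P_1\cup\cdots\cup P_g$ containing it. Since $T_i\in R(N,N,d)$, the rational matrix $\sigma_i=\frac{1}{d}T_i$ (transposed if necessary so as to match the row vectors appearing in (ii)) satisfies ${}^t\sigma_iN\sigma_i=N$, hence is an isometry of the ternary quadratic space of $N$ with $\det\sigma_i=\pm1$. Hypothesis (ii), read with a representation vector $\mathbf{x}_0=v_0+d\mathbf{w}_0$, says precisely that $\sigma_i\mathbf{x}_0$ is again integral, that ${}^t(\sigma_i\mathbf{x}_0)N(\sigma_i\mathbf{x}_0)={}^t\mathbf{x}_0N\mathbf{x}_0=\nu$, and that $\eta_d(\sigma_i\mathbf{x}_0)\in P_i\cup R_M(N,d,a)$, the translation terms $x\cdot{}^t[T_i]_1+y\cdot{}^t[T_i]_2+z\cdot{}^t[T_i]_3$ in (ii) absorbing the ambiguity $\mathbf{x}_0\mapsto\mathbf{x}_0+d\z^3$ in the choice of $\mathbf{x}_0$. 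We now iterate: put $\mathbf{x}_{j+1}=\sigma_i\mathbf{x}_j$ as long as $\eta_d(\mathbf{x}_j)\in P_i$. Each $\mathbf{x}_j=\sigma_i^{\,j}\mathbf{x}_0$ is then an integral representation of $\nu$ by $N$, and either some $\eta_d(\mathbf{x}_J)\in R_M(N,d,a)$, whence $\nu\in Q(M)$ by the base case, or the iteration never stops.

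In the latter case the orbit $\{\sigma_i^{\,j}\mathbf{x}_0:j\ge0\}$ consists of integral representations of $\nu$ by the positive definite form $N$, hence is finite, so $\sigma_i^{\,k}\mathbf{x}_0=\mathbf{x}_0$ for some $k\ge1$. Because $\sigma_i$ is an isometry of a nondegenerate ternary space of infinite order, the fixed space of any positive power of $\sigma_i$ is contained in its $(\det\sigma_i)$-eigenline, which is $\q w_i$; since $w_i$ is primitive, $\mathbf{x}_0=hw_i$ for some $h\in\z$, and then $\nu={}^tw_iNw_i\cdot h^2$ with $h\ne0$, contradicting the choice of $\nu$. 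Hence the iteration must reach a residue in $R_M(N,d,a)$, and $\nu\in Q(M)$ in every case.

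The step I expect to demand the most care is the passage from ``infinite order'' to the eigenline conclusion: one must pin down the eigenvalue pattern of an infinite-order isometry of a nondegenerate ternary quadratic space (a pair $\lambda,\lambda^{-1}$ with $\lambda$ not a root of unity, together with a third eigenvalue $\det\sigma_i=\pm1$) and deduce that its only finite orbits lie on the $(\pm1)$-eigenline, so that positive-definiteness of $N$ produces the contradiction above. Everything else is routine bookkeeping: verifying that (ii) encodes exactly the statement that the $T_i$-transformation keeps the iterates integral with residues confined to $P_i\cup R_M(N,d,a)$, and reconciling the transpose conventions between the action of $T_i$ on column vectors and the row vectors ${}^t[T_i]_l$ entering (ii).
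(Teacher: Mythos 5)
Your argument is correct and is essentially the standard orbit/eigenvector argument of \cite[Corollary 2.2]{O2}, which is all the paper itself offers: it states the lemma as ``a matrix version of [O2, Corollary 2.2]'' and gives no proof. Your reconstruction (reduce mod $d$, apply the isometry $\frac{1}{d}T_i$ while condition (ii) keeps the residue in $P_i\cup R_M$, use positive definiteness to force periodicity, and use the infinite-order hypothesis to confine any periodic point to the $\det$-eigenline $\mathbb{Q}w_i$) matches that source, including your correct reading of the two typos in the statement ($\cup$ for $\cap$, and the $i$/$j$ index mismatch).
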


In Proposition \ref{propmany}, we will find the set of positive integers which are not represented by each of 26 octagonal forms in Table \ref{tablepm}.
Since the proofs for some octagonal forms among 26 forms are similar to each other, we divide 26 forms into eight batches and only provide proofs for eight representative cases.
Some lemmas which is needed only for proofs of the other 18 octagonal forms are also omitted and are available upon request to the authors.

\begin{lem} \label{lem111}
Let $\nu$ be a positive integer which is a multiple of 3 and is not of the form $2^{2a}(8b+7)$ for nonnegative integers $a$ and $b$.
Then we have $\nu\rat \langle 1,1,1\rangle$.
\end{lem}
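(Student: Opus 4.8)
The plan is to reduce the statement to the classical theorem on \emph{primitive} representations by the sum of three squares, the key point being that divisibility by $3$ automatically converts primitivity into the required coprimality. First I would peel off powers of $4$: write $\nu=4^{c}\mu$ with $4\nmid\mu$. Since $\gcd(3,4)=1$, the number $\mu$ is still a multiple of $3$, and if $\mu$ were of the forbidden shape $2^{2a}(8b+7)$, then so would be $\nu=2^{2(a+c)}(8b+7)$, contradicting the hypothesis. Hence it suffices to produce integers $x,y,z$ with $\mu=x^{2}+y^{2}+z^{2}$ and $3\nmid xyz$, because then $\nu=(2^{c}x)^{2}+(2^{c}y)^{2}+(2^{c}z)^{2}$ with $3\nmid(2^{c}x)(2^{c}y)(2^{c}z)$, i.e.\ $\nu\rat\langle 1,1,1\rangle$. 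Note also that $4\nmid\mu$ rules out $\mu\equiv 0,4\pmod 8$, while the case $a=0$ of the excluded forms rules out $\mu\equiv 7\pmod 8$; thus $\mu\not\equiv 0,4,7\pmod 8$.

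Next I would invoke the classical fact that every positive integer $\mu$ with $\mu\not\equiv 0,4,7\pmod 8$ is represented by $x^{2}+y^{2}+z^{2}$ with $\gcd(x,y,z)=1$ (see, e.g., \cite{Ki}; this refines the three squares theorem and can be obtained from it together with the fact that $\gen(\langle 1,1,1\rangle)$ has class number one). Fix such a primitive representation $\mu=x^{2}+y^{2}+z^{2}$. Since every square is congruent to $0$ or $1$ modulo $3$, and $3\mid\mu$, the relation $x^{2}+y^{2}+z^{2}\equiv 0\pmod 3$ forces either $x\equiv y\equiv z\equiv 0\pmod 3$ or $x,y,z\not\equiv 0\pmod 3$. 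The former is incompatible with $\gcd(x,y,z)=1$, so the latter holds; hence $3\nmid xyz$, and the reduction above gives $\nu\rat\langle 1,1,1\rangle$.

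The only ingredient here that is not a one-line congruence or a short descent is the primitive three squares theorem, and that is exactly where whatever difficulty the lemma has is concentrated; once it is granted, the congruence modulo $3$ finishes the argument with no further work.
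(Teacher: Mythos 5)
Your proof is correct, but it takes a genuinely different route from the paper's. You peel off the powers of $4$ and then invoke Gauss's theorem that every positive integer $\mu\not\equiv 0,4,7\Mod 8$ admits a \emph{primitive} representation as a sum of three squares; primitivity plus the mod-$3$ dichotomy (since $3\mid\mu$, the congruence $x^2+y^2+z^2\equiv 0\Mod 3$ forces either all or none of $x,y,z$ to be divisible by $3$) then yields $3\nmid xyz$ for free, and scaling by $2^c$ preserves this. The paper instead applies Legendre's three-square theorem directly to $\nu$, observes the same dichotomy, and disposes of the only problematic case ($x_0\equiv y_0\equiv z_0\equiv 0\Mod 3$, which can occur only when $9\mid\nu$) by a counting argument: writing $\nu=9\nu'$, the number of representations with $xyz\not\equiv 0\Mod 3$ equals $r(9\nu',\langle 1,1,1\rangle)-r(\nu',\langle 1,1,1\rangle)$, which is positive by a lemma of Kim and Oh \cite{KyO}. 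Your version trades that recent, unpublished counting input for a classical ingredient, which is a reasonable exchange. The one place where you are slightly glib is the parenthetical claim that the primitive three-squares theorem follows easily from the ordinary one together with class number one: what is actually needed is the standard fact that a lattice of class number one primitively represents everything its genus primitively represents, combined with the local computation that the primitive exceptions for $\langle 1,1,1\rangle$ at $p=2$ are exactly the classes $0,4,7\Mod 8$ (or, alternatively, Gauss's class-number formula for the number of primitive representations). With a precise reference for that statement your argument is complete and arguably more self-contained than the paper's.
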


\begin{proof}
Since $\nu$ is not of the form $2^{2a}(8b+7)$ for some $a,b\in \z_{\ge 0}$, there is a vector $(x_0,y_0,z_0)\in \z^3$ such that
$$
\nu=x_0^2+y_0^2+z_0^2
$$
by Legendre's three-square theorem.
Note that
\begin{equation} \label{eqlem111}
x_0y_0z_0\not\equiv 0\Mod 3 \ \ \text{or} \ \ x_0\equiv y_0\equiv z_0\equiv 0\Mod 3
\end{equation}
since $\nu$ is a multiple of 3.
If $\nu \not\equiv 0\Mod 9$, then $x_0y_0z_0\not\equiv 0\Mod 3$ and we are done.
So we may write $\nu=9\nu'$ with $\nu'\in \n$.
Using \eqref{eqlem111}, one may easily show that
\begin{align*}
&\left\vert \left\{ (x,y,z)\in \z^3 : x^2+y^2+z^2=9\nu',\ xyz\not\equiv 0\Mod 3 \right\} \right\vert \\
&=r(9\nu',\langle 1,1,1\rangle)-r(\nu',\langle 1,1,1\rangle).
\end{align*}
Note that the class number of the $\z$-lattice $\langle 1,1,1\rangle$ is one and thus we have 
$$
r(9\nu',\langle 1,1,1\rangle)-r(\nu',\langle 1,1,1\rangle)>0
$$
by \cite[Lemma 2.2]{KyO}.
Therefore, we have
\begin{align*}
&\left\vert \left\{ (x,y,z)\in \z^3 : x^2+y^2+z^2=9\nu',\ xyz\not\equiv 0\Mod 3 \right\} \right\vert \\
&=r(9\nu',\langle 1,1,1\rangle)-r(\nu',\langle 1,1,1\rangle) \\
&>0.
\end{align*}
This completes the proof.
\end{proof}

\begin{lem} \label{lem123}
Let $\nu$ be a positive integer which is a multiple of 6 and not of the form $2^{2a+1}(8b+5)$ for nonnegative integers $a$ and $b$.
Then we have $\nu\rat \langle 1,2,3\rangle$.
\end{lem}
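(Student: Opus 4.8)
The plan is to reduce the claim $\nu \rat \langle 1,2,3\rangle$ to a statement about representing $\nu$ by the ternary form $x^2+2y^2+3z^2$ with a coprimality-to-$3$ condition, and then to handle that using Jones's lemma (Lemma \ref{lemJones}) together with genus theory for $\langle 1,2,3\rangle$. First I would recall that the $\z$-lattice $\langle 1,2,3\rangle$ has class number one, so a positive integer $t$ is represented by $x^2+2y^2+3z^2$ if and only if it is locally represented everywhere; a standard computation of local conditions shows that the set of positive integers \emph{not} represented by $\langle 1,2,3\rangle$ is exactly $\{2^{2a+1}(8b+5) : a,b\in \z_{\ge 0}\}$. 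Hence, under the hypothesis on $\nu$, there is at least one representation $\nu = x_0^2+2y_0^2+3z_0^2$; the work is to upgrade this to one in which $x_0y_0z_0 \not\equiv 0 \Mod 3$.

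Next I would split the argument according to the $3$-adic valuation of $\nu$, mirroring the structure of the proof of Lemma \ref{lem111}. Since $\nu \equiv 0 \Mod 3$, reducing $\nu = x_0^2+2y_0^2+3z_0^2$ modulo $3$ gives $x_0^2 + 2y_0^2 \equiv x_0^2 - y_0^2 \equiv 0 \Mod 3$, so either $x_0 \equiv y_0 \equiv 0 \Mod 3$ or $x_0 y_0 \not\equiv 0 \Mod 3$. In the latter case we still need $z_0 \not\equiv 0 \Mod 3$: if $z_0 \equiv 0$, I would instead look at $3z_0^2 \equiv \nu - x_0^2 - 2y_0^2$, and use Lemma \ref{lemJones} applied to $x^2 + 2y^2 = \nu - 3z_0^2$ (which is a multiple of $3$) to replace $(x_0,y_0)$ by a solution with $x_0 y_0 \not\equiv 0 \Mod 3$ while keeping $z_0$ fixed — but this only works if $\nu - 3z_0^2$ can simultaneously be made not divisible by $9$ in the $z_0$-variable, so the cleaner route is a direct counting argument as in Lemma \ref{lem111}. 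When $\nu \not\equiv 0 \Mod 9$, the congruence forces $z_0 \not\equiv 0 \Mod 3$ automatically once $x_0 \equiv y_0 \equiv 0$ is excluded (since then $3z_0^2 \equiv \nu \not\equiv 0 \Mod 9$), so we are done. When $9 \mid \nu$, write $\nu = 9\nu'$; any representation of $\nu$ with all of $x_0, y_0, z_0 \equiv 0 \Mod 3$ descends to a representation of $\nu'$, so
\begin{align*}
&\left\vert \{(x,y,z)\in\z^3 : x^2+2y^2+3z^2 = 9\nu',\ xyz\not\equiv 0 \Mod 3\}\right\vert \\
&\ge r(9\nu',\langle 1,2,3\rangle) - r(\nu',\langle 1,2,3\rangle),
\end{align*}
and I would invoke \cite[Lemma 2.2]{KyO} (applicable since the class number is one) to conclude this difference is positive, exactly as in the proof of Lemma \ref{lem111}.

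The one subtlety, and the place I expect the main obstacle, is verifying that a representation with $x_0 \equiv y_0 \equiv z_0 \equiv 0 \Mod 3$ of $\nu = 9\nu'$ really does correspond bijectively (after dividing the vector by $3$) to a representation of $\nu'$, and that the inclusion-exclusion count is tight — in particular one must check there is no contribution from vectors where exactly some of the coordinates vanish mod $3$ but the congruence $x^2 - y^2 \equiv 0 \Mod 3$ still fails to force all three to be divisible by $3$. Here the case analysis of $x_0^2 + 2y_0^2 + 3z_0^2 \Mod 9$ does the job: if $3 \mid x_0$ and $3 \mid y_0$ then $\nu \equiv 3z_0^2 \Mod 9$, so $9 \mid \nu$ forces $3 \mid z_0$ as well, closing the gap. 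Once that dichotomy is in hand the proof follows the template of Lemma \ref{lem111} verbatim, and the only genuinely new input is the identification of the exceptional set $2^{2a+1}(8b+5)$ for $\langle 1,2,3\rangle$, which is a routine finite local computation at $2$ (and triviality at all odd primes).
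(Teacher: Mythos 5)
Your overall strategy---transplanting the counting argument of Lemma \ref{lem111} to $\langle 1,2,3\rangle$---breaks down at the mod-$3$ dichotomy, and this is a genuine gap rather than the minor subtlety you flag. For $x^2+y^2+z^2$ the congruence $x^2+y^2+z^2\equiv 0\Mod 3$ really does force either all three or none of the variables to be divisible by $3$, which is what makes the identity $\vert\{xyz\not\equiv 0\}\vert=r(9\nu')-r(\nu')$ correct. For $x^2+2y^2+3z^2$ the third variable is invisible modulo $3$: reduction mod $3$ controls only $x$ and $y$, and a representation with $xy\not\equiv 0\Mod 3$ but $z\equiv 0\Mod 3$ is perfectly possible and does not descend to a representation of $\nu'$. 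Concretely, $12=2^2+2\cdot 2^2+3\cdot 0^2$ refutes your claim that $\nu\not\equiv 0\Mod 9$ ``automatically'' forces $z_0\not\equiv 0\Mod 3$ once $x_0\equiv y_0\equiv 0\Mod 3$ is excluded; and $3=1^2+2\cdot 1^2+3\cdot 0^2=0^2+2\cdot 0^2+3\cdot 1^2$ is a multiple of $3$ represented by $\langle 1,2,3\rangle$ in several ways but never with $xyz\not\equiv 0\Mod 3$, so the evenness of $\nu$ (which your argument never uses) is an essential hypothesis. For the same reason your counting inequality when $9\mid\nu$ points the wrong way: the set of ``bad'' vectors contains, and is in general strictly larger than, the set of vectors with all coordinates divisible by $3$, so one only obtains $\vert\{xyz\not\equiv 0\Mod 3\}\vert\le r(9\nu',\langle 1,2,3\rangle)-r(\nu',\langle 1,2,3\rangle)$, which is useless.

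The paper's proof takes a different route that you should adopt. Starting from any representation $\nu=x^2+2y^2+3z^2$ with $x^2+2y^2>0$ (achievable because $3z^2=z^2+2z^2$), Jones's Lemma \ref{lemJones} applied to $x^2+2y^2=\nu-3z^2$ (a positive multiple of $3$) replaces $(x,y)$ by $(x_1,y_1)$ with $x_1y_1\not\equiv 0\Mod 3$. If $z\equiv 0\Mod 3$, one then performs the rational rotation $x_2=\frac{x_1+3z}{2}$, $z_2=\frac{x_1-z}{2}$, which preserves $x_1^2+3z^2$ and makes both new coordinates congruent to $2x_1\not\equiv 0\Mod 3$. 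The integrality of $x_2$ and $z_2$ is exactly where $2\mid\nu$ enters, via $x_1\equiv z\Mod 2$. This transfer-of-coprimality step, not a representation count, is the missing idea in your proposal.
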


\begin{proof}
Note that the class number of $\langle 1,2,3\rangle$ is one.
From this and the assumption that $\nu$ is not of the form $2^{2a+1}(8b+5)$, one may easily check that $\nu$ is represented by $\langle 1,2,3\rangle$.
Thus there is a vector $(x,y,z)\in \z^3$ such that
$$
\nu=x^2+2y^2+3z^2.
$$
We may assume that $x^2+2y^2>0$ because of the equation $3z^2=z^2+2z^2$.
Since $\nu-3z^2\equiv 0\Mod 3$, there is a vector $(x_1,y_1)$ with $x_1y_1\not\equiv 0\Mod 3$ such that
$$
x^2+2y^2=x_1^2+2y_1^2
$$
by Lemma \ref{lemJones}.
We assume that $z\equiv 0\Mod 3$ since otherwise we are done.
Note that $x_1\equiv z\Mod 2$ since $\nu=x_1^2+2y_1^2+3z^2$ is even.
If we define
$$
x_2=\frac{x_1+3z}{2}\ \ \text{and}\ \ z_2=\frac{x_1-z}{2},
$$
then one may easily show that $\nu=x_2^2+2y_1^2+3z_2^2$ and $x_2y_1z_2\not\equiv 0\Mod 3$.
This completes the proof.
\end{proof}

\begin{lem} \label{lem113}
Let $\nu$ be a positive integer congruent to 5 or 8 modulo 12.
Then we have $\nu\rat \langle 1,1,3\rangle$.
\end{lem}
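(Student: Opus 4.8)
\emph{Proof idea.} The plan is to first exhibit an arbitrary representation of $\nu$ by the ternary lattice $\langle 1,1,3\rangle$ and then, if necessary, to convert it into one all of whose coordinates are prime to $3$. Since $\nu\equiv 5$ or $8\Mod{12}$, we have $\nu\equiv 2\Mod 3$, so $\nu$ is prime to $3$ and in particular is not of the form $9^{a}(9b+6)$; using that the class number of $\langle 1,1,3\rangle$ is one, I would check by a routine local computation that $\nu\ra\langle 1,1,3\rangle$, say $\nu=x^{2}+y^{2}+3z^{2}$ for some $(x,y,z)\in\z^{3}$. Reducing modulo $3$ and using that $t^{2}\equiv 0$ or $1\Mod 3$, the relation $x^{2}+y^{2}\equiv 2\Mod 3$ forces $3\nmid x$ and $3\nmid y$ automatically. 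Hence if the representation we found already has $3\nmid z$, then $\nu\rat\langle 1,1,3\rangle$ and we are done; so the remaining task is to treat the case $3\mid z$.

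To handle the case $3\mid z$, I would perform a ``half similitude'' substitution on the binary subform $y^{2}+3z^{2}$. The first point is that, after interchanging $x$ and $y$ if necessary (harmless by the symmetry of the form), one may assume $y\equiv z\Mod 2$: otherwise $x\equiv y\not\equiv z\Mod 2$, and reducing $\nu=x^{2}+y^{2}+3z^{2}$ modulo $4$ gives $\nu\equiv 2\Mod 4$ when $z$ is even (and $x,y$ odd) or $\nu\equiv 3\Mod 4$ when $z$ is odd (and $x,y$ even), contradicting $\nu\equiv 5\Mod{12}$ (whence $\nu\equiv 1\Mod 4$) or $\nu\equiv 8\Mod{12}$ (whence $\nu\equiv 0\Mod 4$). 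Granting $y\equiv z\Mod 2$, I would set $x'=x$, $y'=(y-3z)/2$, $z'=(y+z)/2$; these are integers, they satisfy $x'^{2}+y'^{2}+3z'^{2}=\nu$ because $(y-3z)^{2}+3(y+z)^{2}=4y^{2}+12z^{2}$, and since $3\mid z$ one gets $y'\equiv z'\equiv 2y\not\equiv 0\Mod 3$ while $3\nmid x'$, so $3\nmid x'y'z'$. This yields $\nu\rat\langle 1,1,3\rangle$, as required.

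I expect the only delicate step to be the parity normalization $y\equiv z\Mod 2$: this is exactly where the hypothesis $\nu\equiv 5$ or $8\Mod{12}$ is used, since the part $\nu\equiv 2\Mod 3$ makes the conditions $3\nmid x$, $3\nmid y$ free, while the part $\nu\not\equiv 2,3\Mod 4$ rules out the obstruction $x\equiv y\not\equiv z\Mod 2$ that would block the substitution. Everything else is a direct verification; in particular no appeal to Lemma~\ref{lembad} or to the escalator bookkeeping is needed here.
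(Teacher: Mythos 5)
Your proof is correct, but it takes a genuinely different route from the paper. The paper does not work with $\langle 1,1,3\rangle$ directly: it replaces it by the sublattice $L(1)=\langle 1\rangle \perp \begin{pmatrix} 4&1\\1&7\end{pmatrix}$, whose representations $\nu=x^2+(y-2z)^2+3(y+z)^2$ automatically have all three coordinates prime to $3$ once $\nu\equiv 2\Mod 3$ (since $y-2z\equiv y+z\Mod 3$); because $L(1)$ has class number three, the authors must then invoke the genus $\{L(1),L(2),L(3)\}$ together with the relations $N\prec_{4,a}L(1)$ for $a\in\{0,1\}$ and Lemma \ref{lemnobad} to pass from genus to class, and this is where the condition $\nu\equiv 0,1\Mod 4$ enters for them. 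You instead exploit that $\langle 1,1,3\rangle$ itself is regular with class number one (excluded set $9^a(9b+6)$, which is disjoint from $\nu\equiv 2\Mod 3$), observe that $3\nmid xy$ is automatic, and repair the case $3\mid z$ by the half-similitude $(y,z)\mapsto\bigl((y-3z)/2,(y+z)/2\bigr)$, using $\nu\equiv 0,1\Mod 4$ only to secure the parity $y\equiv z\Mod 2$ (after possibly swapping $x$ and $y$). All of your steps check out: the identity $(y-3z)^2+3(y+z)^2=4y^2+12z^2$ is right, the mod $4$ case analysis ruling out $x\equiv y\not\equiv z\Mod 2$ is right, and $y'\equiv z'\equiv 2y\Mod 3$ when $3\mid z$. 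Your argument is in fact the same descent trick the paper itself uses for $\langle 1,2,3\rangle$ in Lemma \ref{lem123} and for $\langle 2,3,4\rangle$ in Lemma \ref{lem234}, and it is more elementary here than the paper's treatment, since it avoids the class-number-three genus computation and the $\prec_{d,a}$ machinery entirely; what the paper's approach buys is uniformity with the neighboring lemmas (\ref{lem233}, \ref{lem346}), which are proved by the same sublattice-plus-genus template.
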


\begin{proof}
Let $L(1)=\langle 1\rangle \perp \begin{pmatrix} 4&1\\1&7\end{pmatrix}$ be a ternary $\z$-lattice.
Since $\nu\equiv 2\Mod 3$, one may easily check that $\nu\ra \gen(L(1))$.
Note that the class number of $L(1)$ is three and
$$
\gen(L(1))=\left\{ L(1),\ L(2)=\begin{pmatrix}2&-1&1\\-1&4&1\\1&1&5\end{pmatrix},\ L(3)=\langle 1,1,27\rangle \right\}.
$$
One may show that $N\prec_{d,a}M$ for
$$
M=L(1),\ N\in \{L(2),L(3)\},\ d=4,\ a\in \{0,1\}.
$$
Since $\nu\equiv 0,1\Mod 4$ by assumption, we have $\nu\ra L(1)$ by Lemma \ref{lemnobad}.
Thus there is a vector $(x,y,z)\in \z^3$ such that
\begin{align*}
\nu&=x^2+4y^2+7z^2+2yz\\
&=x^2+(y-2z)^2+3(y+z)^2.
\end{align*}
Since $\nu\equiv 2\Mod3$, we have $x(y-2z)\not\equiv 0\Mod 3$ and thus $y+z\not\equiv 0\Mod 3$.
This completes the proof.
\end{proof}

\begin{lem} \label{lem233}
Let $\nu$ be a positive integer congruent to 8 or 14 modulo 24.
Then $\nu\rat \langle 2,3,3\rangle$.
\end{lem}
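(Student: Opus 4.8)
The plan is to imitate the argument given for Lemma~\ref{lem113}: we realize the assertion as an ordinary representation statement for an auxiliary ternary $\z$-lattice, one whose quadratic form already encodes the required coprimality to $3$, and then settle that representation statement by genus theory together with Lemmas~\ref{lemnobad} and~\ref{lembad}. Let $L$ be the sublattice of $\langle 2,3,3\rangle$ of index $9$ consisting of the vectors $(y_1,y_2,y_3)$ with $y_1\equiv y_2\equiv y_3\Mod 3$; in the basis $(1,1,1),(0,3,0),(0,0,3)$ its Gram matrix is
$$
M_L=\begin{pmatrix} 8 & 9 & 9 \\ 9 & 27 & 0 \\ 9 & 0 & 27 \end{pmatrix},
$$
so that $f_L(x,y,z)=2x^2+3(x+3y)^2+3(x+3z)^2$. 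The point of this choice is the following. Suppose $\nu\ra L$, say $\nu=2x^2+3(x+3y)^2+3(x+3z)^2$, and put $y_1=x$, $y_2=x+3y$, $y_3=x+3z$; then $\nu=2y_1^2+3y_2^2+3y_3^2$ with $y_1\equiv y_2\equiv y_3\Mod 3$. Since $\nu\equiv 2\Mod 3$ (as $8\equiv 14\equiv 2\Mod 3$), reducing this equation modulo $3$ gives $2y_1^2\equiv 2\Mod 3$, whence $3\nmid y_1$ and therefore $3\nmid y_2$ and $3\nmid y_3$ as well. Thus $(y_1y_2y_3,3)=1$ and $\nu\rat\langle 2,3,3\rangle$. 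So it suffices to prove that every $\nu\equiv 8$ or $14\Mod{24}$ is represented by $L$.

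For this I would argue exactly as in Lemma~\ref{lem113}. Since $9$ is odd, $L$ is isometric to $\langle 2,3,3\rangle$ over $\z_2$; over $\z_3$ the Jordan splitting of $L$ consists of a unimodular component of rank one and a component of scale $27$ of rank two, which (being of square determinant, hence isotropic) represents all of $27\z_3$. A routine local computation then shows that the local conditions at $2$ and $3$ amount precisely to $\nu\equiv 8$ or $14\Mod{24}$, so every such $\nu$ satisfies $\nu\ra\gen(L)$. If the class number of $L$ is one we are done. In general I would compute $\gen(L)$ explicitly and, for each genus mate $N$ of $L$, establish the relation $N\prec_{d,a}L$ with a suitable divisor $d$ of $24$ and $a$ ranging over those residues in $H_d$ that are $\equiv 8$ or $14\Mod{24}$; since $\nu$ lies in $\bigcup_{a}S_{d,a}$ and $\nu\ra\gen(L)$, Lemma~\ref{lemnobad} then forces $\nu\ra L$. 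If, for some $(d,a)$, the relation $N\prec_{d,a}L$ fails, one uses Lemma~\ref{lembad} instead, which yields the conclusion outside a thin family $\{\,{}^tw\,N\,w\cdot h^2:h\in\n\,\}$, and one checks by a congruence argument that no such integer lies in $S_{24,8}\cup S_{24,14}$ (the concrete representations $8=f_L(1,0,0)$ and $14=f_L(2,-1,-1)$ indicate that the progression itself is hit).

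The essential difficulty is therefore the explicit determination of $\gen(L)$ and the verification of the relations $N\prec_{d,a}L$. Unlike in Lemma~\ref{lem113}, where a single coordinate of coefficient $3$ had to be tied modulo $3$ to a coordinate of coefficient $1$ (and the change of variables had index $3$, so the auxiliary lattice had determinant $27$), here \emph{both} coordinates of coefficient $3$ must be tied modulo $3$ to the unique coordinate of coefficient $2$; this forces a change of variables of index $9$, so $L$ carries a factor $3^6$ in its determinant, its genus has more than one class, and the $\prec_{d,a}$ bookkeeping is correspondingly heavier — although still entirely finite and algorithmic, it is of exactly the kind the paper carries out by computer and suppresses for the non-representative cases among the twenty-six forms. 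Alternatively one could follow the counting strategy of Lemma~\ref{lem111}, bounding below the number of representations of $\nu$ by $L$ through a difference of representation numbers of class-number-one lattices, but the sublattice approach above seems cleaner.
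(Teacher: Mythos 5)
Your proposal follows essentially the same route as the paper: the paper's auxiliary lattice $L(1)=\begin{pmatrix}8&2&1\\2&14&7\\1&7&17\end{pmatrix}$ is precisely your index-$9$ congruence sublattice of $\langle 2,3,3\rangle$ (same determinant $1458$, exhibited via the identity $8x^2+14y^2+17z^2+4xy+2xz+14yz=2(x-2y-z)^2+3(x+y+2z)^2+3(x+y-z)^2$, whose three linear forms are congruent modulo $3$), and the paper likewise finishes by computing $\gen(L(1))=\{L(1),L(2),L(3)\}$ and verifying $L(2),L(3)\prec_{24,a}L(1)$ for $a\in\{8,14\}$ as in Lemma \ref{lem113}. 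One cosmetic slip: a binary unimodular $\z_3$-lattice of square determinant is anisotropic rather than isotropic, so the $27$-scaled component does not represent all of $27\z_3$ --- but this is harmless, since local representability of $\nu\equiv 2\Mod 3$ already follows from the rank-one unimodular piece $\langle 8\rangle$.
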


\begin{proof}
The proof is quite similar to that of Lemma \ref{lem113}.
One may easily prove the lemma by using
$$
L(1)=\begin{pmatrix}8&2&1\\2&14&7\\1&7&17\end{pmatrix},\ L(2)=\begin{pmatrix}5&1&0\\1&11&0\\0&0&27\end{pmatrix},\ L(3)=\langle 2,27,27\rangle,
$$
$d=24$, $a\in \{8,14\}$, and an equation
$$
8x^2+14y^2+17z^2+4xy+2xz+14yz=2(x-2y-z)^2+3(x+y+2z)^2+3(x+y-z)^2.
$$
\end{proof}

\begin{lem} \label{lem346}
Let $\nu$ be a positive integer greater than or equal to 13, congruent to 16 or 22 modulo 24, not of the form $2^{2a+1}(8b+7)$ for nonnegative integers $a$ and $b$, and is not a perfect square.
Then we have $\nu\rat \langle 3,4,6\rangle$.
\end{lem}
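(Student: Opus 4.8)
The plan is to follow the proofs of Lemmas \ref{lem113} and \ref{lem233}, but now with the ``exceptional set'' version of the method, Lemma \ref{lembad}: the hypothesis that $\nu$ is not a perfect square should be exactly what is needed to remove that exceptional set. First I would fix a ternary $\z$-lattice $M=L(1)$ equipped with an integral identity
$$
f_{L(1)}(x,y,z)=3\ell_1(x,y,z)^2+4\ell_2(x,y,z)^2+6\ell_3(x,y,z)^2
$$
in which $\ell_1,\ell_2,\ell_3$ are linear forms over $\z$ satisfying $\ell_1\equiv\pm\ell_2\equiv\pm\ell_3\pmod 3$ (equivalently, $L(1)$ is a sublattice of $\langle 3,4,6\rangle$ whose index is a power of $3$), and then determine $\gen(L(1))=\{L(1),L(2),\dots\}$; this genus will have class number larger than one.

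Next, using $\nu\equiv 16$ or $22\pmod{24}$ together with the fact that $\nu$ is not of the form $2^{2a+1}(8b+7)$, I would verify the local conditions at the primes $2$ and $3$ (the only primes dividing $2\det\langle 3,4,6\rangle$), concluding that $\nu\ra\gen(L(1))$. The bulk of the work is then to apply Lemma \ref{lembad} with $M=L(1)$, with $N$ running through the genus mates, with a suitable modulus $d$ (a multiple of $24$) and residues $a\in\{16,22\}$: one must exhibit an admissible partition $R(N,d,a)-R_M(N,d,a)=P_1\cup\cdots\cup P_g$ and matrices $T_i\in R(N,N,d)$ of infinite order verifying condition (ii) of that lemma. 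The output is $\nu\ra L(1)$ for every $\nu$ in the two progressions with $\nu\ge 13$, except those lying in a set of the form $\bigcup_i\{c_ih^2:h\in\n\}$ with $c_i={}^tw_iNw_i$; a direct check should show that each $c_i$ is a perfect square (in fact equal to $1$), so this exceptional set is contained in the set of squares and is excluded by our hypothesis.

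Once $\nu\ra L(1)$ is known, the identity above produces integers $\ell_1,\ell_2,\ell_3$ with $3\ell_1^2+4\ell_2^2+6\ell_3^2=\nu$. Since $16\equiv 22\equiv 1\pmod 3$, reducing mod $3$ gives $\ell_2^2\equiv 4\ell_2^2\equiv\nu\equiv 1\pmod 3$, so $3\nmid\ell_2$, and then $3\nmid\ell_1$ and $3\nmid\ell_3$ because $\ell_1\equiv\pm\ell_2\equiv\pm\ell_3\pmod 3$; hence $(\ell_1\ell_2\ell_3,3)=1$ and $\nu\rat\langle 3,4,6\rangle$. The main obstacle is the middle step: locating a lattice $L(1)$ with the correct genus and quadratic identity, and then checking the hypotheses of Lemma \ref{lembad} carefully enough that the exceptional set really does collapse to (a subset of) the perfect squares. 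A lesser but nontrivial point is the $2$-adic analysis, where the restriction that $\nu$ not be of the form $2^{2a+1}(8b+7)$ is precisely what forces $\nu$ to be represented by $\gen(L(1))$.
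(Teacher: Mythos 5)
Your overall strategy---represent $\nu$ by a congruence sublattice $L(1)$ of $\langle 3,4,6\rangle$, pass from $\gen(L(1))$ to $L(1)$ by the machinery of Lemmas \ref{lemnobad}/\ref{lembad}, then transfer back to $\langle 3,4,6\rangle$---is indeed the method of the paper. But as written the proposal defers every decisive step (``I would fix\dots'', ``should show\dots''): no $L(1)$ is exhibited, no genus is computed, and neither the relation $N\prec_{d,a}M$ nor the hypotheses of Lemma \ref{lembad} are verified, so nothing is actually proved. Moreover, the two design choices you do commit to both point in the wrong direction. First, demanding $\ell_1\equiv\pm\ell_2\equiv\pm\ell_3\Mod 3$ (so that the reduction mod $3$ alone forces all three $\ell_i$ prime to $3$) forces an index-$9$ sublattice of $\langle 3,4,6\rangle$, of determinant $5832$; this is not equivalent to ``index a power of $3$,'' and it makes the genus/regularity analysis substantially harder. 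The paper instead takes the index-$3$ sublattice $L(1)=\langle 4,9,18\rangle$ via $4x^2+9y^2+18z^2=4x^2+3(y+2z)^2+6(y-z)^2$, where only the coefficients of $3$ and $6$ satisfy $y+2z\equiv y-z\Mod 3$; the congruence $\nu\equiv1\Mod3$ gives $x\not\equiv0\Mod3$ for free, and the missing condition on $y+2z$ and $y-z$ is repaired by Jones's Lemma \ref{lemJones}, applied to $(y+2z)^2+2(y-z)^2=3(y^2+2z^2)\equiv0\Mod3$. Your plan never invokes Lemma \ref{lemJones}, which is the key auxiliary tool here.

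Second, you misidentify the role of the hypothesis that $\nu$ is not a perfect square. It is not needed to kill an exceptional square class coming from Lemma \ref{lembad}: with $L(2)=\langle 1,18,36\rangle$ and $L(3)$ as in the paper one obtains the clean relations $L(2),L(3)\prec_{24,a}L(1)$ for $a\in\{16,22\}$, so Lemma \ref{lemnobad} already yields $\nu\ra L(1)$ with no exceptional set (and your guess that the exceptional multipliers ${}^tw_iNw_i$ would all equal $1$ has no basis). The square hypothesis is needed at the very last step: in a representation $\nu=4x^2+9y^2+18z^2$ one could have $y=z=0$, i.e.\ $\nu=(2x)^2$, in which case there is no positive multiple of $3$ to which Jones's lemma can be applied; excluding perfect squares guarantees $9y^2+18z^2>0$ so that the Jones adjustment goes through. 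So while the skeleton of your argument is right, the proposal as it stands has a genuine gap (all verifications outstanding) and, where it is specific, it would need to be redirected along the lines above to succeed.
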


\begin{proof}
The proof is quite similar to that of Lemma \ref{lem113}.
One may easily prove the lemma by using
$$
L(1)=\langle 4,9,18\rangle,\ L(2)=\langle 1,18,36\rangle,\ L(3)=\begin{pmatrix}7&3&1\\3&9&3\\1&3&13\end{pmatrix},
$$
$d=24$, $a\in \{16,22\}$, and an equation
$$
4x^2+9y^2+18z^2=4x^2+3(y+2z)^2+6(y-z)^2
$$
with Lemma \ref{lemJones}.
\end{proof}

\begin{lem} \label{lem234}
Let $\nu$ be a positive integer which is a multiple of 3 satisfying at least one of the followings;
\begin{enumerate} [(i)]
\item $\nu\equiv 2\Mod {16}$ or $\nu\equiv 8,24,56\Mod {64}$;
\item $\nu\equiv 3\Mod {9}$ and $\nu\not\in \{ 2^{2a+1}(8b+5) : a,b\in \z_{\ge 0} \} \cup \{ 3c^2 : c\in \n \}$.
\end{enumerate}
Then we have $\nu\rat \langle 2,3,4\rangle$.
\end{lem}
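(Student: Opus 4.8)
The plan is to follow the two-step scheme of Lemmas \ref{lem113}, \ref{lem233} and \ref{lem346}: first show that $\nu$ is represented by the ternary form $\langle 2,3,4\rangle$, and then arrange that the representation is primitive modulo $3$. For the representability step I would determine $\gen(\langle 2,3,4\rangle)$ and check that the hypotheses are exactly the conditions for $\nu$ to be represented over every $\z_p$: the congruences modulo $16$ and $64$ in (i) record the $\z_2$-condition, while in (ii) the condition $\nu\equiv 3\Mod 9$ records the $\z_3$-behaviour and $\nu\notin\{2^{2a+1}(8b+5):a,b\in\z_{\ge 0}\}$ reflects the same obstruction at $2$ as for $\langle 1,2,3\rangle$ (cf. Lemma \ref{lem123}). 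If the class number exceeds one, one argues as in Lemma \ref{lem113}: with $M=\langle 2,3,4\rangle$ one brings in the genus mates $N$ and verifies $N\prec_{d,a}M$ for the relevant moduli $d$ -- a power of $2$ with $a$ ranging over the residues allowed by (i), and a multiple of $9$ with $a$ ranging over those allowed by (ii) -- so that Lemma \ref{lemnobad}, together with Lemma \ref{lembad} to excise the finitely many square-classes on which the method fails (the source of the exclusions $\nu\ne 3c^2$ and $\nu\ne 2^{2a+1}(8b+5)$), gives $\nu\ra\langle 2,3,4\rangle$.

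For the primitivity step, write $\nu=2x^2+3y^2+4z^2$. Since $3\mid\nu$ one has $x^2\equiv z^2\Mod 3$, so it suffices to arrange $3\nmid x$ and $3\nmid y$. The new feature, compared with Lemmas \ref{lem113}, \ref{lem233} and \ref{lem346}, is that the middle coefficient $3$ is invisible modulo $3$, so $\nu\Mod 3$ gives no information on $y$, and one must descend further, treating (i) and (ii) separately. Under (ii) I would use an identity writing $\langle 2,3,4\rangle$, restricted to a sublattice of index a power of $3$, as $2\alpha^2+3\beta^2+4\gamma^2$ in linear forms $\alpha,\beta,\gamma$ of $x,y,z$ whose image satisfies a congruence coupling all three of $\alpha,\beta,\gamma$ modulo $3$ (in the spirit of the identity $8x^2+14y^2+17z^2+4xy+2xz+14yz=2(x-2y-z)^2+3(x+y+2z)^2+3(x+y-z)^2$ from Lemma \ref{lem233}); then $3\mid\alpha$ would force $9\mid 2\alpha^2$, $9\mid 3\beta^2$ and $9\mid 4\gamma^2$, hence $9\mid\nu$, contradicting $\nu\equiv 3\Mod 9$, so $3\nmid\alpha\beta\gamma$ and the representation is automatically primitive. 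Under (i), where $9\mid\nu$ may occur, I would instead fix $y$ with $3\nmid y$ (using the $2$-adic room left by the congruences on $\nu$), rewrite $2(x^2+2z^2)=\nu-3y^2$ with $(\nu-3y^2)/2$ divisible by $3$, and apply Lemma \ref{lemJones} to the binary form $x^2+2z^2$ -- exactly as Lemma \ref{lem346} uses it -- to replace $(x,z)$ by a pair with $3\nmid xz$.

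The main obstacle is precisely the invisibility of the coefficient $3$ modulo $3$: unlike in the earlier lemmas one cannot deduce primitivity of all three coordinates from $\nu\Mod 3$, and one is forced to split according to whether $9\mid\nu$ and to use two genuinely different devices -- a modulo-$9$ argument under (ii) and a $2$-adic argument feeding Lemma \ref{lemJones} under (i). Arranging that these devices fit the stated hypotheses without producing exceptions beyond $2^{2a+1}(8b+5)$ and $3c^2$ -- in particular, showing under (i) that $y$ can be chosen so that $(\nu-3y^2)/2$ is representable by $x^2+2z^2$ -- is the delicate part. What remains, namely computing $\gen(\langle 2,3,4\rangle)$ and the auxiliary lattices and checking all the relations $N\prec_{d,a}M$, is a finite, computational task of the same kind already carried out for Lemmas \ref{lem113}, \ref{lem233} and \ref{lem346}.
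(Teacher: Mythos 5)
Your overall architecture is right, and your treatment of case (ii) is essentially the paper's: the paper takes the index-$9$ sublattice $L(1)=\begin{pmatrix}9&3&0\\3&15&6\\0&6&18\end{pmatrix}$ of $\langle 2,3,4\rangle$, whose quadratic form equals $2(x-2y-z)^2+3(x+y+2z)^2+4(x+y-z)^2$ with all three linear forms congruent to one another modulo $3$, so that a non-primitive representation would force $9\mid\nu$, contradicting $\nu\equiv 3\Mod 9$; representability of $\nu$ by $L(1)$ is then extracted from its genus $\{L(1),\langle 6,12,27\rangle,\langle 3,6,108\rangle\}$ via Lemma \ref{lembad}, the leftover square classes being $12h^2$ (represented directly) and $3h^2$ (excluded by hypothesis). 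Your sketch captures all of this up to the computational details.

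Case (i) is where there is a genuine gap. You propose to take a representation $\nu=2x^2+3y^2+4z^2$ with $3\nmid y$ and then apply Lemma \ref{lemJones} to $x^2+2z^2=(\nu-3y^2)/2$. But nothing guarantees such a representation exists: you would need some $y$ with $3\nmid y$ (and $y$ even, which is forced since $\nu$ is even) for which $(\nu-3y^2)/2$ is represented by the binary form $x^2+2z^2$, and representability by a binary form is governed by the parities of $\mathrm{ord}_p$ at the primes $p\equiv 5,7\Mod 8$, not by congruence conditions on $\nu$ or $y$; the ``$2$-adic room'' in (i) is irrelevant to forcing this. The paper circumvents the issue: since $\nu$ is even, any representation by the genus $\{\langle2,3,4\rangle,\langle1,2,12\rangle\}$ already yields $\nu\ra\langle2,4,12\rangle$, hence $\nu/2=x^2+2y^2+6z^2$; Lemma \ref{lemJones} is applied to the value $x^2+2y^2=\nu/2-6z^2$, which is represented by $\langle1,2\rangle$ by construction, giving $3\nmid x_1y_1$; and the remaining coordinate is repaired, when $3\mid z$, by the substitution $2y_1^2+6z^2=2\bigl(\tfrac{y_1+3z}{2}\bigr)^2+6\bigl(\tfrac{y_1-z}{2}\bigr)^2$, which is integral because $\nu/2\equiv 1,4\Mod 8$ forces $y_1\equiv z\Mod 2$, and which makes both new coordinates congruent to $2^{-1}y_1\not\equiv 0\Mod 3$. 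This binary substitution inside $\langle 2,6\rangle$ is the missing idea in your case (i); without it (or a replacement such as a $\prec_{d,a}$ analysis of the sublattice $\langle 2,27,4\rangle$), the step ``fix $y$ with $3\nmid y$'' does not go through.
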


\begin{proof}
Let $L=\langle 2,3,4\rangle$ and assume first that $(i)$ holds.
Note that the class number of $L$ is two and
$$
\gen(L)=\{ L,\ \langle 1,2,12\rangle \}.
$$
By assumption, one may easily check that $\nu\ra \gen(L)$.
Since $\nu$ is even, we have $\nu\ra \langle 2,4,12\rangle$.
If we put $\nu=2\nu'$, then $\nu'\ra \langle 1,2,6\rangle$ and thus there is a vector $(x,y,z)\in \z^3$ such that
$$
\nu'=x^2+2y^2+6z^2.
$$
We may assume that $x^2+2y^2>0$ since $6z^2=(2z)^2+2z^2$.
From Lemma \ref{lemJones} follows that there is a vector $(x_1,y_1)\in \z^2$ with $x_1y_1\not\equiv 0\Mod 3$ such that
$$
x^2+2y^2=x_1^2+2y_1^2.
$$
Now we may write
$$
\nu'=x_1^2+2y_1^2+6z^2.
$$
Since $\nu'\equiv 1,4\Mod 8$, we have $y_1\equiv z\Mod 2$.
We may assume that $z \equiv 0\Mod 3$ since otherwise we are done.
Then we have
$$
\nu'=x_1^2+2\left( \frac{y_1+3z}{2}\right)^2+6\left( \frac{y_1-z}{2} \right)^2,
$$
where $x_1\cdot \dfrac{y_1+3z}{2}\cdot \dfrac {y_1-z}{2} \not\equiv 0\Mod 3$.
It follows that
$$
\nu=2x_1^2+3(y_1-z)^2+4\left( \frac{y_1+3z}{2} \right)^2
$$
and thus we have $\nu\rat L$.

Next, we assume that (ii) holds and define a ternary $\z$-lattice $L(1)$ by
$$
L(1)=\begin{pmatrix}9&3&0\\3&15&6\\0&6&18\end{pmatrix} 
$$
Note that the class number of $K$ is three and
$$
\gen(L(1))=\spn(L(1))=\{L(1),\ L(2)=\langle 6,12,27\rangle,\ L(3)=\langle 3,6,108\rangle \}.
$$
We claim that $\nu \ra L(1)$.
One may easily show that the claim implies that $\nu \rat L$ by using the equation
\begin{align*}
&9x_2^2+15y_2^2+18z_2^2+6x_2y_2+12y_2z_2\\
&=2(x_2-2y_2-z_2)^2+3(x_2+y_2+2z_2)^2+4(x_2+y_2-z_2)^2
\end{align*}
and the assumption that $\nu \equiv 3\Mod 9$.

One may easily check from the assumption (ii) that $\nu\ra \gen(L(1))$.
To prove the claim, assume first that $\nu \ra L(2)$.
We use Lemma \ref{lembad} with
$$
M=L(1),\ N=L(2),\ d=9,\ a=3.
$$
Note that
$$
R(N,d,a)-R_M(N,d,a)=\left\{ {}^t(0,v_2,v_3)\in \Psi_9^3 : v_2\not\equiv 0\Mod 3,\ v_3\equiv 0\Mod 3 \right\}.
$$
If we take $P_1=R(N,d,a)-R_M(N,d,a)$ and
$$
T_1=\frac{1}{9}\begin{pmatrix}3&0&-18\\0&9&0\\4&0&3\end{pmatrix},
$$
then one may easily check that all conditions of Lemma \ref{lembad} are satisfied.
Hence $\nu \ra L(1)$ unless $\nu=12h^2$ for some $h\in \n$.
However, we have $12h^2\ra L(1)$ since $12\ra K$.
So far, we proved that $\nu \ra L(1)$ if $\nu \ra L(2)$ is the case.
Now we assume that $\nu \ra L(3)$.
We use Lemma \ref{lembad} with
$$
M=L(1),\ N=L(3),\ d=9,\ a=3.
$$
Note that
$$
R(N,d,a)-R_M(N,d,a)=\left\{ {}^t(v_1,0,v_3)\in \Psi_9^3 : v_1\not\equiv 0\Mod 3,\ v_3\equiv 0\Mod 3 \right\}.
$$
If we take $P_1=R(N,d,a)-R_M(N,d,a)$ and
$$
T_1=\frac{1}{9}\begin{pmatrix}9&0&0\\0&3&-36\\0&2&3\end{pmatrix},
$$
then one may easily check that all conditions of Lemma \ref{lembad} are satisfied.
Hence $\nu \ra L(1)$ unless $\nu=3h^2$ for some $h\in \n$.
If $h\equiv 0\Mod 2$, then $\nu \ra L(1)$ since $12\ra L(1)$.
If $h\equiv 1\Mod 2$, then since $\nu \ge 12$ and $\nu \equiv 3\Mod 9$ by assumptions,
there is a prime $p$ greater than 3 such that $h\equiv 0\Mod p$.
Since $3\ra L(3)$, we have $3p^2\ra L(1)$ by \cite[Theorem 1]{BH}.
From this follows that $\nu \ra L(1)$.
Hence we have the claim and now the lemma follows.
\end{proof}
\begin{lem} \label{lem334}
Let $\nu$ be a positive integer greater than or equal to 10, congruent to 7 modulo 24.
Then we have $\nu\rat \langle 3,3,4\rangle$.
\end{lem}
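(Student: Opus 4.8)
The plan is to follow the template of Lemma \ref{lem233} (hence, ultimately, of Lemma \ref{lem113}) essentially verbatim. Unwinding the definition of $\rat$, the assertion $\nu\rat\langle 3,3,4\rangle$ is that there exist $y_1,y_2,y_3\in\z$ with $3y_1^2+3y_2^2+4y_3^2=\nu$ and $3\nmid y_1y_2y_3$. Since $\nu\equiv 7\equiv 1\Mod 3$, any such equation forces $y_3^2\equiv\nu\equiv 1\Mod 3$, so $3\nmid y_3$ automatically; the whole point is therefore to produce a representation with $3\nmid y_1$ and $3\nmid y_2$ as well. The device for this is the identity
$$
3(x-2y-z)^2+3(x+y+2z)^2+4(x+y-z)^2=10x^2+19y^2+19z^2+2xy-2xz+16yz,
$$
in which the three linear forms on the left are pairwise congruent modulo $3$ (all are $\equiv x+y-z\Mod 3$). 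I would let $L(1)$ be the ternary $\z$-lattice with the right-hand side as Gram matrix; it has determinant $2916$ and is isometric to an index-$9$ sublattice of $\langle 3,3,4\rangle$. If one can show $\nu\ra L(1)$, then writing $\nu=f_{L(1)}(x,y,z)$ and setting $(y_1,y_2,y_3)=(x-2y-z,\,x+y+2z,\,x+y-z)$ gives $\nu=3y_1^2+3y_2^2+4y_3^2$ with $y_1\equiv y_2\equiv y_3\Mod 3$; since $3\nmid y_3$ this yields $3\nmid y_1y_2y_3$, i.e.\ $\nu\rat\langle 3,3,4\rangle$.

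So the task reduces to proving $\nu\ra L(1)$ for every $\nu\ge 10$ with $\nu\equiv 7\Mod{24}$. First I would compute the genus of $L(1)$, which I expect to be $\gen(L(1))=\spn(L(1))=\{L(1),L(2),L(3)\}$ for two explicit genus mates, and check—using only that $\nu$ is a $3$-adic unit and $\nu\equiv 7\Mod 8$—that $\nu\ra\gen(L(1))$. To descend from the genus to $L(1)$ itself I would invoke Lemma \ref{lemnobad}: it suffices to verify $N\prec_{d,a}M$ for $M=L(1)$, $N\in\{L(2),L(3)\}$, $d=24$, $a=7$. Should some class fail to satisfy a clean $\prec_{24,7}$ relation, one uses Lemma \ref{lembad} instead, which costs only finitely many exceptional values of the shape $ch^2$; these are small and are either ruled out by the hypothesis $\nu\ge 10$—note $7=3\cdot1^2+0+4\cdot1^2$ but $7\nrat\langle 3,3,4\rangle$, which is exactly why this bound is imposed—or disposed of by direct computation. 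This gives $\nu\ra L(1)$, and the identity above then finishes the proof.

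The main obstacle is purely computational: pinning down the correct auxiliary lattice $L(1)$ and its genus, and then carrying out the finite but lengthy verification of the relations $N\prec_{24,7}L(1)$ (together with the hypotheses of Lemma \ref{lembad} and the check of the exceptional square multiples, if these turn out to be needed). The remaining ingredients—the local check that $\nu\ra\gen(L(1))$ and the extraction of the coprimality-to-$3$ condition from the congruence $\nu\equiv 1\Mod 3$—are routine and run exactly parallel to the proofs of Lemmas \ref{lem113} and \ref{lem233}.
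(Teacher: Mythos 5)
Your proposal is correct and takes essentially the same route as the paper: the auxiliary lattice you build from the identity $3(x-2y-z)^2+3(x+y+2z)^2+4(x+y-z)^2=10x^2+19y^2+19z^2+2xy-2xz+16yz$ is, up to the sign change $x\mapsto -x$, exactly the paper's $L(1)$, whose genus consists of $L(1)$, $\langle 4,27,27\rangle$ and one further class, and the descent from the genus to $L(1)$ is done just as you describe (Lemma \ref{lemnobad} for one mate, Lemma \ref{lembad} for the other). The only cosmetic difference is that the paper uses $d=8$, $a=7$ for the relations $\prec_{d,a}$ rather than $d=24$.
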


\begin{proof}
The proof is similar to that of the second case of Lemma \ref{lem234}.
One may use 
$$
L(1)=\begin{pmatrix}10&-1&1\\-1&19&8\\1&8&19\end{pmatrix},\ L(2)=\begin{pmatrix}4&0&0\\0&27&0\\0&0&27\end{pmatrix},\ L(3)=\begin{pmatrix}7&2&0\\2&16&0\\0&0&27\end{pmatrix}.
$$
For the case when $\nu \ra L(2)$, we note that $L(2)\prec_{8,7}L(1)$.
In the case when $\nu \ra L(3)$, one may use Lemma \ref{lembad} with
$$
M=L(1),\ N=L(3),\ d=8,\ a=7
$$
to show that $\nu \ra L(1)$.
\end{proof}

\begin{lem} \label{lem356}
Let $\nu$ be a positive integer greater than or equal to 14, congruent to 8,32,38 modulo 48, and not divisible by 5.
Then we have $\nu\rat \langle 3,5,6\rangle$.
\end{lem}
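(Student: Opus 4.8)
The plan is to follow the template of Lemmas~\ref{lem234} and \ref{lem334}. The key algebraic input I would hunt for is a ternary $\z$-lattice $L(1)$ carrying an identity
\begin{equation*}
f_{L(1)}(x,y,z)=3\,\ell_1(x,y,z)^2+5\,\ell_2(x,y,z)^2+6\,\ell_3(x,y,z)^2,
\end{equation*}
where $\ell_1,\ell_2,\ell_3$ are integral linear forms congruent to one another modulo $3$. Such a triple arises by choosing an integral matrix $T$ whose three rows are pairwise congruent mod $3$ (which forces $9\mid\det T$) and putting $L(1)=T^{t}\langle 3,5,6\rangle T$; this is the same device producing the lattice $\begin{pmatrix}9&3&0\\3&15&6\\0&6&18\end{pmatrix}$ in Lemma~\ref{lem234}. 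Given such an $L(1)$, if $\nu\ra L(1)$ then $\nu=3\ell_1^2+5\ell_2^2+6\ell_3^2$ with $\ell_1\equiv\ell_2\equiv\ell_3\Mod 3$; since $3+5+6\equiv 2\Mod 3$ and $\nu\equiv 2\Mod 3$ (as $8\equiv 32\equiv 38\equiv 2\Mod 3$), the common residue of the $\ell_i$ must be a unit modulo $3$, so $\ell_1\ell_2\ell_3\not\equiv 0\Mod 3$ and hence $\nu\rat\langle 3,5,6\rangle$. If a single $L(1)$ of small class number with this property is awkward to exhibit, one may instead use a shorter identity $f_{L(1)}=c\,m^2+3\,\ell_1^2+6\,\ell_2^2$ with $\ell_1\equiv\ell_2\Mod 3$, as in Lemma~\ref{lem346}, and feed the resulting congruence information into Lemma~\ref{lemJones}.

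It then remains to prove $\nu\ra L(1)$ under the hypotheses of the lemma. First I would check that $\nu\ra\gen(L(1))$; this is a finite local computation at the primes $2$, $3$ and $5$, and the congruence $\nu\equiv 8,32,38\Mod{48}$ together with $5\nmid\nu$ is arranged exactly so that $\nu$ is represented by every localization of $L(1)$. Writing $\gen(L(1))=\{L(1),L(2),L(3),\dots\}$, I would then transfer a representation of $\nu$ from each proper genus mate $L(i)$ to $L(1)$: whenever $L(i)\prec_{d,a}L(1)$ for a suitable $d\mid 48$ and a residue $a$ coming from the congruence class of $\nu$, this is immediate from Lemma~\ref{lemnobad}; for any remaining mate one applies Lemma~\ref{lembad} with an explicit infinite-order matrix $T_i$, which gives $\nu\ra L(1)$ apart from finitely many exceptional values of the form ${}^tw_iNw_i\cdot h^2$. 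The hypotheses $5\nmid\nu$ and $\nu\ge 14$ are then used to dispose of these: those with ${}^tw_iNw_i$ divisible by $5$ are excluded outright, the small ones by $\nu\ge 14$, and the rest by the trivial remark ${}^tw_iNw_i\ra L(1)\Rightarrow {}^tw_iNw_i\cdot h^2\ra L(1)$ together with a scaling argument of the type in \cite[Theorem~1]{BH} applied to a prime $p\ge 5$ dividing $h$, exactly as in the endgame of Lemma~\ref{lem234}.

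The main obstacle is the first step: locating an auxiliary lattice $L(1)$ that simultaneously admits the required identity, has as small a class number as possible, and is locally represented by $\nu$ under the given congruences --- and then carrying out the finite but sizable verification of the relations $L(i)\prec_{d,a}L(1)$ over $H_d^3$ and identifying the vectors $w_i$ in Lemma~\ref{lembad}. Once the data $L(1),L(2),L(3),\dots$, the modulus $d$, the residue(s) $a$, and the defining identity have been pinned down, the remaining argument is the same routine verification carried out in Lemmas~\ref{lem113}, \ref{lem233}, \ref{lem346}, \ref{lem234}, and \ref{lem334}, so I would expect the final write-up to consist of ``the proof is similar to that of Lemma~\ref{lem234}'' followed by exactly this data.
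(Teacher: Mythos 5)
Your proposal matches the paper's proof: the authors take $L(1)=\langle 5,9,18\rangle$ with the identity $5x^2+9y^2+18z^2=5x^2+3(y+2z)^2+6(y-z)^2$ plus Lemma \ref{lemJones} — exactly your ``shorter identity'' fallback — and then establish $\nu\ra L(1)$ from $\gen(L(1))=\{L(1),\langle 2,9,45\rangle,L(3)\}$ via $\prec_{48,38}$ for the residue $38$ and Lemma \ref{lembad} with $d=24$, $a=8$ for the residues $8,32$, handling exceptional square classes as in Lemma \ref{lem234}. This is essentially the same approach; only the explicit lattice data remains to be filled in.
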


\begin{proof}
Let $L(1)=\langle 5,9,18\rangle$.
One may easily show that $\nu \ra L(1)$ is a sufficient condition for $\nu \rat \langle 3,5,6\rangle$, by using the equation
$$
5x^2+9y^2+18z^2=5x^2+3(y+2z)^2+6(y-z)^2
$$
and Lemma \ref{lemJones}.
To prove that $\nu \ra L(1)$, one may use
$$
\gen(L(1))=\spn(L(1))=\left\{L(1),\ L(2)=\langle 2,9,45\rangle,\ L(3)=\begin{pmatrix}8&3&2\\3&9&3\\2&3&14\end{pmatrix}\right\}.
$$
For the case when $\nu \equiv 38\Mod{48}$, we note that
$$
L(2)\prec_{48,38}L(1),\ \ L(3)\prec_{48,38}L(1).
$$
When $\nu \equiv 8,32\Mod{48}$, using Lemma \ref{lembad} with
$$
M=L(1),\ N\in \{L(2),L(3)\},\ d=24,\ a=8,
$$
one may show that $\nu \ra L(1)$ by a similar argument in the proof of the second case of Lemma \ref{lem234}.
\end{proof}

We will use the following lemma which is a slight generalization of the method appeared in the proof of \cite[Theorem 2.1]{JO}.
Though the proof is essentially the same as in that paper, we provide the proof for completeness.
\begin{lem} \label{lem11}
Let $a, b$ be positive integers with $a\equiv b\not\equiv 0\Mod 3$ and let $l,\alpha, \beta$ be integers defined by $l=\text{lcm}(a,b)$, $\alpha=\dfrac{(a+b)l^2}{ab}$ and $\beta=\dfrac{\alpha-a-b-6}{3}$.
Let $u$ be a positive integer and assume that there is an integer $w$ such that $u-\alpha P_8(w)-\beta$ is congruent to 2 modulo 3 and represented by the diagonal ternary quadratic form $\langle 1,1,3(a+b)\rangle$.
Then $u$ is represented by the quaternary octagonal form $p_8(3,3,a,b)$.
\end{lem}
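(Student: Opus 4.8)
The plan is to translate the claim into a representation statement for the diagonal quaternary form $\langle 3,3,a,b\rangle$ and then to exhibit an explicit representation built out of the given representation by $\langle 1,1,3(a+b)\rangle$. Recall from Section 2 that $u\ra p_8(3,3,a,b)$ if and only if $3u+a+b+6\rat\langle 3,3,a,b\rangle$, which comes from the identity $3cP_8(x)+c=c(3x-1)^2$ and the fact that $3x-1$ runs over the integers coprime to $3$. So it suffices to find integers $y_1,y_2,y_3,y_4$, none divisible by $3$, with $3y_1^2+3y_2^2+ay_3^2+by_4^2=3u+a+b+6$.

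First I would record the arithmetic of the parameters. Writing $d=\gcd(a,b)$ and $a=da'$, $b=db'$ with $\gcd(a',b')=1$, one gets $\text{lcm}(a,b)=l=da'b'$, hence $\alpha=(a+b)l^2/(ab)=(a+b)a'b'\in\z$. Since $a\equiv b\Mod 3$ and $d\not\equiv 0\Mod 3$, we obtain $a'\equiv b'\Mod 3$ with both coprime to $3$, so $a'b'\equiv 1\Mod 3$ and therefore $\alpha-a-b-6=(a+b)(a'b'-1)-6\equiv 0\Mod 3$; thus $\beta\in\z$ and $3\beta=\alpha-a-b-6$. This congruence is the only place the hypothesis $a\equiv b\Mod 3$ is used.

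Next I would use the hypothesis: pick $w$ as in the statement and write $u-\alpha P_8(w)-\beta=s^2+t^2+3(a+b)r^2$ with $s,t,r\in\z$. Because the left-hand side is $\equiv 2\Mod 3$ we must have $s^2+t^2\equiv 2\Mod 3$, which forces $s\not\equiv 0$ and $t\not\equiv 0\Mod 3$ automatically. Substituting this expression for $u$ into $3u+a+b+6$ and using $3P_8(w)+1=(3w-1)^2$ together with $3\beta+a+b+6=\alpha$ gives, writing $m=3w-1$,
$$3u+a+b+6=\alpha m^2+3s^2+3t^2+9(a+b)r^2.$$

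The key step is then the algebraic identity
$$\alpha m^2+9(a+b)r^2=(a+b)(a'b'm^2+9r^2)=a(b'm+3r)^2+b(a'm-3r)^2,$$
where the first equality is the factorization $\alpha=(a+b)a'b'$ and the second uses $a=da'$, $b=db'$, with the $mr$ cross terms cancelling. Combining, $3u+a+b+6=3s^2+3t^2+a(b'm+3r)^2+b(a'm-3r)^2$, and since $m=3w-1$ and $a',b'$ are all coprime to $3$, the four entries $s$, $t$, $b'm+3r\equiv b'm$, $a'm-3r\equiv a'm$ are coprime to $3$; hence $3u+a+b+6\rat\langle 3,3,a,b\rangle$, which is what we wanted. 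I expect the only genuine content to be locating this identity — in particular the observation that $\alpha$ factors as $(a+b)a'b'$ with $a',b'$ coprime, which is exactly where $l=\text{lcm}(a,b)$ is needed; everything else is routine congruence bookkeeping.
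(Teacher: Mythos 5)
Your proposal is correct and follows essentially the same route as the paper: reduce to showing $3u+a+b+6\rat\langle 3,3,a,b\rangle$, multiply the given representation by $3$, absorb $\alpha(3w-1)^2$, and split $\alpha m^2+9(a+b)r^2$ as $a(\cdot)^2+b(\cdot)^2$ — your $b'$ and $a'$ are exactly the paper's $l/a$ and $l/b$, so your key identity is the paper's identity in different notation. The only (welcome) additions are your explicit check that $\beta\in\z$ and the observation that the condition $\equiv 2\Mod 3$ automatically forces $st\not\equiv 0\Mod 3$.
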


\begin{proof}
It suffices to show that
$$
3u+3+3+a+b\rat \langle 3,3,a,b\rangle.
$$
By assumptions, there is a vector $(x,y,z)\in \z^3$ with $xy\not\equiv 0\Mod 3$ such that
$$
u-\alpha(3w^2-2w)-\frac{\alpha-a-b-6}{3}=x^2+y^2+3(a+b)z^2.
$$
Multiplying both sides of this equation by 3, we get
$$
3u-(3w-1)^2\alpha+3+3+a+b=3x^2+3y^2+9(a+b)z^2.
$$
It follows that
\begin{align*}
3u+3+3+a+b&=3x^2+3y^2+9(a+b)z^2+(3z-1)^2(a+b)\frac{l^2}{ab}\\
&=3x^2+3y^2+a\left(3z+(3w-1)\frac{l}{a}\right)^2+b\left(3z-(3w-1)\frac{l}{b} \right)^2.
\end{align*}
Note that $l\not\equiv 0\Mod 3$ since $ab\not\equiv 0\Mod 3$.
This completes the proof.
\end{proof}

\begin{lem} \label{lem2233}
The octagonal form $p_8(2,2,3,3)$ represents all positive integers $u$ satisfying $u\not\equiv 1\Mod 4$ and $u\neq 11,14$.
\end{lem}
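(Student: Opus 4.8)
Since $p_8(2,2,3,3)=p_8(3,3,2,2)$, the natural tool is Lemma~\ref{lem11} applied with $a=b=2$. Then $l=\text{lcm}(2,2)=2$, so $\alpha=\frac{(a+b)l^2}{ab}=4$ and $\beta=\frac{\alpha-a-b-6}{3}=-2$, and the associated diagonal ternary form is $\langle 1,1,3(a+b)\rangle=\langle 1,1,12\rangle$. Hence it suffices to show: for every positive integer $u$ with $u\not\equiv 1\Mod 4$ and $u\notin\{11,14\}$, there is $w\in\z$ such that $N_w:=u+2-4P_8(w)$ satisfies $N_w\equiv 2\Mod 3$ and $N_w\ra\langle 1,1,12\rangle$.

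Two elementary observations streamline the choice of $w$. First, $4P_8(w)=12w^2-8w\equiv w\Mod 3$, so $N_w\equiv 2\Mod 3$ exactly when $w\equiv u\Mod 3$. Second, $4P_8(w)\equiv 0\Mod 4$, so $N_w\equiv u+2\Mod 4$ for every $w$; under the hypothesis $u\not\equiv 1\Mod 4$ this forces $N_w\not\equiv 3\Mod 4$ regardless of $w$. The remaining input I would use about the ternary form is that $\langle 1,1,12\rangle$, which has class number one, represents every positive integer $N$ with $N\not\equiv 3\Mod 4$ that is not of the form $9^k(9m+6)$; in particular, since a multiple of $3$ cannot be $\equiv 2\Mod 3$, it represents every positive $N\equiv 2\Mod 3$ with $N\not\equiv 3\Mod 4$. (Should one prefer not to quote regularity of $\langle 1,1,12\rangle$, the required representations can instead be produced by a genus/spinor computation in the style of Lemmas~\ref{lembad} and \ref{lem234}.)

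With these in hand, the argument is a short case analysis on $u\bmod 3$: take $w=0$ if $u\equiv 0\Mod 3$ (so $N_w=u+2\ge 5$); take $w=1$ if $u\equiv 1\Mod 3$, where $u\ne 1$ because $1\equiv 1\Mod 4$, hence $u\ge 4$ and $N_w=u-2\ge 2$; and take $w=-1$ if $u\equiv 2\Mod 3$ and $u\ge 18$, where $P_8(-1)=5$ gives $N_w=u-18\ge 2$. In each case $w\equiv u\Mod 3$ and $N_w$ is a positive integer $\equiv 2\Mod 3$ with $N_w\not\equiv 3\Mod 4$, hence $N_w\ra\langle 1,1,12\rangle$, and Lemma~\ref{lem11} gives $u\ra p_8(2,2,3,3)$. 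The only values not yet covered are $u\equiv 2\Mod 3$ with $u\le 17$, namely $u\in\{2,5,8,11,14,17\}$; discarding $5$ and $17$ (both $\equiv 1\Mod 4$) and the excluded $11,14$, there remain only $u=2$ and $u=8$, which one checks directly via $2=2P_8(1)$ and $8=2P_8(1)+3P_8(1)+3P_8(1)$.

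The step I expect to be the real work is establishing the representation statement for $\langle 1,1,12\rangle$ (equivalently, confirming its regularity and reading off the genus obstructions); once that is granted, everything else is a finite and explicit verification.
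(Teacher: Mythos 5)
Your proposal is correct and follows essentially the same route as the paper: apply Lemma~\ref{lem11} with $a=b=2$ (so $\alpha=4$, $\beta=-2$, ternary form $\langle 1,1,12\rangle$), choose $w\in\{0,1,-1\}$ according to $u\bmod 3$, and invoke the class-number-one (regularity) property of $\langle 1,1,12\rangle$ to represent $N_w\equiv 2\Mod 3$, $N_w\not\equiv 3\Mod 4$. The only difference is cosmetic: you shrink the residual finite check to $u\in\{2,8\}$, whereas the paper verifies a slightly larger explicit list for $u\le 17$.
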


\begin{proof}
Let $u$ be a positive integer greater than 17 with $u\not\equiv 1\Mod 4$.
Define an integer $w$ by
$$
w=\begin{cases}0&\text{if}\ \ u\equiv 0\Mod 3,\\
1&\text{if}\ \ u\equiv 1\Mod 3,\\
-1&\text{if}\ \ u\equiv 2\Mod 3.\end{cases}
$$
Note that the class number of the ternary $\z$-lattice $L=\langle 1,1,12\rangle$ is one and thus one may easily check that every positive integer congruent to 2 modulo 3 and not congruent to 3 modulo 4 is represented by $L$.
Now one may easily check that the quadruple $(a,b,u,w)$ satisfies all conditions of Lemma \ref{lem11}.
Thus $u$ is represented by $p_8(2,2,3,3)$.
On the other hand, one may directly check that $p_8(2,2,3,3)$ represents every positive integer in the set
$$
\{2,3,4,6,7,8,10,12,15,16\}.
$$
This completes the proof.
\end{proof}

\begin{lem} \label{lem2233t}
Let $t$ be a positive integer not divisible by 4. The octagonal form $p_8(2,2,3,3,t)$ represents all positive integers greater than or equal to $t+15$.
\end{lem}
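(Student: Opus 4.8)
The plan is to reduce the statement to Lemma~\ref{lem2233} by peeling off the fifth variable. Fix $t$ with $4\nmid t$ and let $N$ be an integer with $N\ge t+15$. I want to produce an integer $w$ for which $N-tP_8(w)$ is a nonnegative integer that is represented by $p_8(2,2,3,3)$; then, putting $x_5=w$ and using Lemma~\ref{lem2233} to choose $x_1,x_2,x_3,x_4$, we get $N\ra p_8(2,2,3,3,t)$. Since $P_8(0)=0$ and $P_8(1)=1$, it will be enough to take $w\in\{0,1\}$, so that the fifth summand contributes either $0$ or $t$; thus the only candidates I need to test for representability by $p_8(2,2,3,3)$ are $N$ and $N-t$.

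The key elementary observation is the congruence behaviour of $P_8(x)=3x^2-2x$ modulo $4$: writing $x=2k$ gives $P_8(x)=12k^2-4k\equiv 0\Mod 4$, while $x=2k+1$ gives $P_8(x)=12k^2+8k+1\equiv 1\Mod 4$. Hence $N-tP_8(0)=N$ and $N-tP_8(1)=N-t\equiv 1-t\Mod 4$. Now the argument splits according to $N\bmod 4$. If $N\not\equiv 1\Mod 4$, take $w=0$: since $t\ge 1$ forces $N\ge 16$, the integer $N$ is positive, $\not\equiv 1\Mod 4$, and distinct from $11$ and $14$, so Lemma~\ref{lem2233} gives $N\ra p_8(2,2,3,3)$ and we are done. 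If $N\equiv 1\Mod 4$, take $w=1$: then $N-t\ge 15$, so $N-t$ is positive and distinct from $11$ and $14$, and moreover $N-t\equiv 1-t\Mod 4$, where the hypothesis $4\nmid t$ guarantees $1-t\not\equiv 1\Mod 4$; thus $N-t\not\equiv 1\Mod 4$ and Lemma~\ref{lem2233} yields $N-t\ra p_8(2,2,3,3)$. Since $t=tP_8(1)$, this gives $N\ra p_8(2,2,3,3,t)$.

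The only point requiring care is to make sure that in each case the relevant target value avoids the two exceptional integers $11$ and $14$ of Lemma~\ref{lem2233} and stays nonnegative; this is exactly where the constant $15$ is used, since $N\ge t+15$ forces both $N\ge 16$ (first case) and $N-t\ge 15$ (second case). I do not expect any genuine obstacle: the entire proof consists of this two-case bookkeeping together with the mod~$4$ computation for $P_8$, and the role of the hypothesis $4\nmid t$ is precisely to let a single odd choice of the fifth variable correct the residue class of the target modulo $4$ into the range covered by Lemma~\ref{lem2233}.
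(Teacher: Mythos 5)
Your proof is correct and follows essentially the same route as the paper's: split on $N \bmod 4$, apply Lemma~\ref{lem2233} to $N$ when $N\not\equiv 1\Mod 4$ and to $N-t$ otherwise, with the hypothesis $4\nmid t$ ensuring $N-t\not\equiv 1\Mod 4$ and the bound $N\ge t+15$ keeping both targets clear of the exceptions $11$ and $14$. No issues.
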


\begin{proof}
Let $u$ be a positive integer greater than or equal to $t+15$.
If $u\not\equiv 1\Mod 4$, then $u\ra p_8(2,2,3,3)$ by Lemma \ref{lem2233}, and thus
$$
u\ra p_8(2,2,3,3,t).
$$
If $u\equiv 1\Mod 4$, then $u-t$ is an integer greater than or equal to 15 not congruent to 1 modulo 4.
By Lemma \ref{lem2233} again, we have $u-t\ra p_8(2,2,3,3)$.
Thus $u\ra p_8(2,2,3,3,t)$.
This completes the proof.
\end{proof}

\noindent Method {\bf (i)} Let $p_8(a_1,a_2,\dots,a_k)$ be a $k$-ary octagonal form with $k\ge 4$.
Let $L=\langle a_1,a_2,\dots,a_k\rangle$ and put $\alpha=a_1+a_2+\cdots+a_k$.
We take a vector $(b_1,b_2,\dots,b_l)$ of length $l\le k-3$ with
$$
(b_1,b_2,\dots,b_l)\prec (a_1,a_2,\dots,a_k).
$$
After rearrangement, we may assume that $b_i=a_i$ for $i=1,2,\dots,l$.
Define $K=\langle c_1,c_2,c_3\rangle$ as following:
When $l=k-3$, then
$$
\langle c_1,c_2,c_3\rangle \simeq \left\langle \frac{a_{l+1}}{d},\frac{a_{l+2}}{d},\frac{a_{l+3}}{d}\right\rangle,
$$ 
where $d=\gcd(a_{l+1},a_{l+2},a_{l+3})$.
When $l\le k-4$, 
$$
\langle c_1,c_2,c_3\rangle \simeq \left\langle \frac{\sum_{i_1\in I_1}a_{i_1}}{d},\frac{\sum_{i_2\in I_2}a_{i_2}}{d},\frac{\sum_{i_3\in I_3}a_{i_3}}{d} \right\rangle,
$$
where $d=\gcd \left( \sum_{i_1\in I_1}a_{i_1},\sum_{i_2\in I_2}a_{i_2},\sum_{i_3\in I_3}a_{i_3}\right)$ and
$$
\{l+1,l+2,\dots,k\}=I_1\cup I_2\cup I_3
$$
is a partition.
Note that the conditions in any of Lemmas \ref{lem111}-\ref{lem356} consist of some congruence relations with a lower bound.
We find a set $B(i)$ of positive integers coprime to 3 for each $i=1,2,\dots,l$, and find a set of positive integers $G=\{g_1,g_2,\dots,g_r\}$ with $g_s\in \{ dt^2 : t\in \n-3\n \}$ satisfying the following; for any sufficiently large positive integer $u$ $(u\ge u_0)$, there is a vector $(x_1,x_2,\dots,x_l)\in \prod_{1\le i\le l} B(i)$ and a positive integer $g_j\in G$ such that
$$
\nu'=\frac{3u+\alpha-b_1x_1^2-b_2x_2^2-\cdots-b_lx_l^2}{g_j}
$$
is a positive integer satisfying all conditions of the lemma regarding representations by the ternary quadratic form $K$.
Now one get $\nu' \rat K$ and from this follows that $\nu \rat L$, or equivalently $u\ra p_8(a_1,a_2,\dots,a_k)$.
For all integers $v$ less than $u_0$, with the help of computer, we directly check whether it is represented by $p_8(a_1,a_2,\dots,a_k)$ or not.
\vskip 10pt
\noindent Method {\bf (ii)} Let $p_8(a_1,a_2,\dots,a_k)$ be a $k$-ary octagonal form with $k\ge 5$ such that $(3,3,a,b)\prec (a_1,a_2,\dots,a_k)$ for some positive integers $a$ and $b$ satisfying $a\equiv b\not\equiv 0\Mod 3$.
After rearrangement, we may assume that $(a_1,a_2,a_3,a_4)=(3,3,a,b)$.
For each sufficiently large integer $\nu$, we find integers $b_5,b_6,\dots,b_k$ and $w$ such that
$$
\nu-a_5P_8(b_5)-a_6P_8(b_6)-\cdots-a_kP_8(b_k)-\alpha P_8(w)-\beta
$$
is congruent to 2 modulo 3 and represented by $\langle 1,1,3(a+b)\rangle$, where $\alpha$ and $\beta$ is defined as in Lemma \ref{lem11}.
Then we apply Lemma \ref{lem11}.

\begin{lem} \label{lem3456}
Let $\nu$ be a positive integer divisible by 3 satisfying at least one of the followings;
\begin{enumerate} [(i)]
\item $\nu>99$ and $\nu\not\equiv 1,2,7,9,14,15\Mod{16}$;
\item $\nu>24$ and $\nu\equiv 6\Mod{18}$;
\item $\nu\equiv 0\Mod{18}$ and $\nu\not\equiv 14\Mod{16}$. 
\end{enumerate}
Then $\nu\rat \langle 3,4,5,6\rangle$.
\end{lem}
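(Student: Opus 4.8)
The plan is to deduce $\nu\rat\langle 3,4,5,6\rangle$, within each residue class of $\nu$, from one of the ternary statements in Lemmas \ref{lem123}, \ref{lem234}, \ref{lem346} and \ref{lem356}, in the spirit of Method (i). Throughout, $3\mid\nu$. Four reductions are available. Since $3x^2+4y^2+5y^2+6z^2=3(x^2+2z^2+3y^2)$, any representation of $\nu/3$ by $\langle 1,2,3\rangle$ with all coordinates prime to $3$ produces $\nu\rat\langle 3,4,5,6\rangle$; by Lemma \ref{lem123} this applies whenever $18\mid\nu$ and $\nu/3$ does not have the shape $2^{2a+1}(8b+5)$. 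Since $3x^2+4y^2+5x^2+6z^2=2(4x^2+2y^2+3z^2)$, any representation of $\nu/2$ by $\langle 2,3,4\rangle$ (prime to $3$) does the same whenever $\nu$ is even and $\nu/2$ meets the hypotheses of Lemma \ref{lem234}. Finally, fixing the coefficient-$4$ variable (resp. the coefficient-$5$ variable) equal to an integer $s$ with $3\nmid s$ reduces the problem to $\nu-4s^2\rat\langle 3,5,6\rangle$ (resp. $\nu-5s^2\rat\langle 3,4,6\rangle$), i.e. to Lemma \ref{lem356} (resp. Lemma \ref{lem346}).

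I would then run through the residue classes of $\nu$ modulo $48$, refining to moduli $64$ or $16$ when the power of $2$ dividing $\nu$ matters, and in each admissible class select the reduction whose output satisfies the congruence hypothesis of the corresponding ternary lemma. For (iii), the subcase $18\mid\nu$ uses the first reduction, since $\nu/3$ is then a multiple of $6$; the only obstruction is that $\nu/3$ could have the shape $2^{2a+1}(8b+5)$, and one checks that $a=0$ forces $\nu\equiv 14\Mod{16}$ (excluded), while the cases $a\ge1$, together with the second or fourth reduction and a look at $2$-adic valuations, are handled directly. For (ii), the class $\nu\equiv 6\Mod{18}$ uses the second reduction through part (ii) of Lemma \ref{lem234}, the single exception $\nu=6c^2$ being treated on its own. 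For (i), the residues $1,7,9,15\Mod{16}$ are exactly the classes $\equiv 1,7\Mod 8$ that no reduction can reach, and $2,14\Mod{16}$ are exactly the classes in which $\nu-5s^2$ is forced to equal $2(8b+7)$ for every admissible $s$; each of the ten remaining classes modulo $16$ is covered by the third or fourth reduction with a suitable $s$ (odd, or $\equiv 2$, or $\equiv 0$ modulo $4$, always prime to $3$), which one verifies class by class.

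It remains to avoid the exceptional values in the ternary lemmas and to treat small $\nu$. For fixed $\nu$, each of $k^2+4s^2=\nu$ and $k^2+5s^2=\nu$ has only finitely many solutions, so all but finitely many admissible $s$ keep $\nu-4s^2$ and $\nu-5s^2$ non-square; letting $s$ range over its residue class and reducing modulo $5$ avoids multiples of $5$ when applying Lemma \ref{lem356}; and choosing $s$ of the appropriate $2$-adic type keeps $\nu-5s^2$ out of the set $\{2^{2a+1}(8b+7)\}$ when applying Lemma \ref{lem346} (inside which Lemma \ref{lemJones} is already used). The bounds $\nu>99$ and $\nu>24$ are precisely what is needed so that in every admissible class the smallest usable $s$ still leaves the ternary input above the threshold of its lemma; the finitely many smaller $\nu$ are checked directly by computer.

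The crux, and the part demanding real care, is the case analysis of the second paragraph: one must confirm that the exclusion lists modulo $16$ and the numerical bounds $99$ and $24$ are sharp, i.e. that for every admissible $\nu$ at least one of the four reductions simultaneously fulfills all the congruence, size, and non-exceptionality requirements of its ternary lemma, while none does for the excluded residues.
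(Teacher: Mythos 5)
Your proposal follows essentially the same route as the paper: the identity $9y^2=4y^2+5y^2$ reducing case (iii) to $\nu/3\rat\langle 1,2,3\rangle$ via Lemma \ref{lem123}, the identity $8x^2=3x^2+5x^2$ reducing case (ii) to $\nu/2\rat\langle 2,3,4\rangle$ via Lemma \ref{lem234}(ii), and the substitutions $\nu-4s^2\rat\langle 3,5,6\rangle$ and $\nu-5s^2\rat\langle 3,4,6\rangle$ for case (i) are exactly the paper's four reductions, with the same treatment of the exceptional sets ($6c^2$, $2^{2a+1}(8b+7)$, squares, multiples of 5) and of small $\nu$ by direct computation. The only difference is that you defer the explicit choice of representatives $s$ in each residue class (the paper takes $x_1\in\{1,5,7,11\}$ and $x_2\in\{1,2,4,5\}$), which is a routine verification.
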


\begin{proof}
First, assume that (i) holds.
If $\nu\le 615$, then one may directly check that
$$
\nu\rat \langle 3,4,5,6\rangle.
$$
We may assume further that $\nu\ge 618$.
Suppose that $\nu\equiv 3,5\Mod 8$.
One may easily check that there is an integer $x_1\in \{1,5,7,11\}$ such that $\nu-5x_1^2$ is an integer greater than or equal to 13 satisfying
$$
\nu-5x_1^2\equiv 6,8,22,24,38,40\ \ \text{or}\ \ 54\Mod{64}.
$$
Then by Lemma \ref{lem346}, we have $\nu-5x_1^2\rat \langle 3,4,6\rangle$.
It follows that $\nu\rat \langle 3,4,5,6\rangle$.
Now suppose that $\nu\equiv 6,10\Mod{16}$ or $\nu\equiv 0\Mod 4$.
One may easily check that there is an integer $x_2\in \{1,2,4,5\}$ such that $\nu-4x_2^2$ is an integer greater than or equal to 14 satisfying
$$
\nu-4x_2^2\not\equiv 0\Mod 5\ \ \text{and}\ \ \nu-4x_2^2\equiv 0,6,8\Mod{16}.
$$
Then by Lemma \ref{lem356}, we have $\nu-4x_2^2\rat \langle 3,5,6\rangle$.
It follows that $\nu\rat \langle 3,4,5,6\rangle$.

Second, assume that (ii) holds.
Note that there is no integer $u$ less than or equal to 99 satisyfing
$$
u\equiv 6\Mod{18},\ u\in \{2^{2a+2}(8b+5) : a,b\in \z_{\ge 0} \}.
$$
If $u$ is an integer greater than 99 satisfying
$$
u\equiv 6\Mod{18},\ u\in \{2^{2a+2}(8b+5) : a,b\in \z_{\ge 0} \},
$$
then $u\rat \langle 3,4,5,6\rangle$ by the case (i).
One may directly check that $6c^2\rat \langle 3,4,5,6\rangle$ for $c=3,4$.
Note that $6c^2\equiv 0,6,8\Mod{16}$, and thus we have $6c^2\rat \langle 3,4,5,6\rangle$ for every integer $c\ge 5$ by the case (i).
Thus we may assume that
$$
\nu\equiv 6\Mod{18},\ \nu\not\in \{ 2^{2a+2}(8b+5) : a,b\in \z_{\ge 0}\} \cup \{6c^2 : c\in \n \}.
$$
It follows immediately from Lemma \ref{lem234} that
$$
\frac{\nu}{2}\rat \langle 2,3,4\rangle.
$$
It follows that $\nu\rat \langle 4,6,3+5\rangle$, and thus we have $\nu\rat \langle 3,4,5,6\rangle$.

Last, assume that (iii) holds.
One may directly check that
$$
\nu\rat \langle 3,4,5,6\rangle
$$
for every $\nu\in \{18,36,54,72,90\}$.
So we may assume that $\nu\ge 108$.
Note that $\nu\rat \langle 3,4,5,6\rangle$ if $\nu\equiv 0\Mod 8$ by the case (i).
Thus we may assume that
$$
\nu\equiv 0\Mod{18}\ \text{and}\ \nu\not\in \{ 2^{2a+1}(8b+7) : a,b\in \z_{\ge 0}\}.
$$
If we put $\nu=3\nu'$, then $\nu'$ is an integer greater than 33 such that
$$
\nu' \equiv 0\Mod 6\ \text{and}\ \nu' \not\in \{ 2^{2a+1}(8b+5) : a,b\in \z_{\ge 0}\}.
$$
It follows immediately from this and Lemma \ref{lem123} that $\nu' \rat \langle 1,2,3\rangle$.
It follows that $\nu\rat \langle 3,6,4+5\rangle$, and thus we have $\nu\rat \langle 3,4,5,6\rangle$.
This completes the proof.
\end{proof}

\begin{table}[ht]
\caption{The set $Z$ of integers in $\mathcal T(a_1)$ not represented by $p_8(a_1,a_2,\dots,a_k)$}
\label{tablepm}
\begin{center}
\begin{tabular}{|ccccccccc|c|c|}
\Xhline{1pt}
$a_1$&$a_2$&$a_3$&$a_4$&$a_5$&$a_6$&$a_7$&$a_8$&$a_9$&$Z$&Method\\
\hline
2&2&2&3&&&&&&$\{8,11\}$&(i)\\
\hline
2&2&3&4&&&&&&$\emptyset$&(i)\\
\hline
2&2&3&6&&&&&&$\{14\}$&\\
\hline
2&3&3&4&&&&&&$\{11\}$&(i)\\
\hline
2&3&4&4&&&&&&$\{12\}$&(i)\\
\hline
2&3&4&5&&&&&&$\emptyset$&(i)\\
\hline
2&3&4&6&&&&&&$\{18\}$&\\
\hline
2&3&4&8&&&&&&$\emptyset$&(i)\\
\hline
2&2&3&3&3&&&&&$\{14\}$&\\
\hline
3&3&4&4&5&&&&&$\{17,21\}$&(ii)\\
\hline
3&3&4&5&6&&&&&$\emptyset$&(i)\\
\hline
3&3&4&5&10&&&&&$\emptyset$&(ii)\\
\hline
3&4&4&5&6&&&&&$\emptyset$&(i)\\
\hline
3&4&5&6&6&&&&&$\{22\}$&(i)\\
\hline
3&4&5&6&8&&&&&$\emptyset$&(i)\\
\hline
3&4&5&6&9&&&&&$\{36\}$&(i)\\
\hline
3&4&5&6&10&&&&&$\{27\}$&\\
\hline
3&4&5&6&12&&&&&$\emptyset$&(i)\\
\hline
4&4&5&6&7&&&&&$\{23,28\}$&(i)\\
\hline
4&5&6&7&8&&&&&$\emptyset$&(i)\\
\hline
5&5&6&7&8&9&&&&$\emptyset$&(i)\\
\hline
5&6&7&8&9&10&&&&$\emptyset$&(i)\\
\hline
6&6&7&8&9&10&11&&&$\emptyset$&(i)\\
\hline
6&7&8&9&10&11&12&&&$\emptyset$&(i)\\
\hline
7&8&9&10&11&12&13&14&&$\emptyset$&(i)\\
\hline
8&9&10&11&12&13&14&15&16&$\emptyset$&(i)\\
\Xhline{1pt}
\end{tabular}
\end{center}
\end{table}

\begin{prop} \label{propmany}
In Table \ref{tablepm}, for each octagonal form $g=p_8(a_1,a_2,\dots,a_k)$ with $a_1\le a_2\le \cdots \le a_k$, the set of positive integers greater than or equal to $a_1$ which is not represented by $g$ is equal to $Z$.
\end{prop}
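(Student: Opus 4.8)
The plan is to handle the $26$ forms of Table \ref{tablepm} in eight batches, grouped by which ternary quadratic form and which auxiliary lemma drive the reduction, and to give a complete argument only for one representative of each batch, the remaining $18$ being entirely analogous. Fix a form $g=p_8(a_1,\dots,a_k)$ from the table, write $\alpha=a_1+\cdots+a_k$, and use throughout the equivalence $u\ra g$ iff $3u+\alpha\rat\langle a_1,\dots,a_k\rangle$. The assertion has two halves: (a) every integer listed in the corresponding set $Z$ fails to be represented by $g$; (b) every integer $u\ge a_1$ not in $Z$ is represented by $g$. When $Z=\emptyset$ part (a) is vacuous and $g$ is in fact $\mathcal T(a_1)$-universal.

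Part (a) is a bounded search: each $z\in Z$ is a small explicit integer, so one lists the finitely many representations of $3z+\alpha$ by $\langle a_1,\dots,a_k\rangle$ and checks that every one of them has some coordinate divisible by $3$, that is, $3z+\alpha\nrat\langle a_1,\dots,a_k\rangle$. For the forms marked with Method (i) this is exactly the verification the computer performs for all $v<u_0$.

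For part (b) we split off a uniformly treated large range from a finite range $a_1\le u<u_0$, the latter being checked directly. In the large range there are two mechanisms. If $(3,3,a,b)\prec(a_1,\dots,a_k)$ with $a\equiv b\not\equiv 0\Mod{3}$ (Method (ii)), then for each sufficiently large $\nu$ we choose integers $b_5,\dots,b_k$ and $w$ so that $\nu-\sum_{i\ge 5}a_iP_8(b_i)-\alpha P_8(w)-\beta$ is $\equiv 2\Mod{3}$ and represented by $\langle 1,1,3(a+b)\rangle$, and conclude by Lemma \ref{lem11}; for the forms lying over $(2,2,3,3)$ or over $(3,4,5,6)$ this step is packaged more conveniently in Lemmas \ref{lem2233}, \ref{lem2233t} and \ref{lem3456}. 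Otherwise (Method (i)) we fix a ternary core $K=\langle c_1,c_2,c_3\rangle$ obtained by grouping the remaining coefficients, choose finite sets $B(i)\subset\n\setminus 3\n$ of admissible values for the variables held back, and check that for every residue of $u$ modulo the working modulus there exist $(x_i)\in\prod B(i)$ and a square $g_j\in\{dt^2:t\in\n\setminus 3\n\}$ for which $\nu'=(3u+\alpha-\sum b_ix_i^2)/g_j$ is a positive integer satisfying the congruence-plus-lower-bound hypotheses of the governing ternary lemma; then $\nu'\rat K$ forces $u\ra g$. Over the eight representatives the governing lemmas are Lemmas \ref{lem111}, \ref{lem123}, \ref{lem113}, \ref{lem233}, \ref{lem346}, \ref{lem234}, \ref{lem334} and \ref{lem356}.

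The main obstacle is the bookkeeping in the large-$u$ part of (b): one must fix $u_0$ and the sets $B(i)$ so that, in every residue class modulo the relevant modulus (typically $16$, $24$, $48$ or $64$, as dictated by the chosen ternary lemma), some admissible shift $3u+\alpha-\sum b_ix_i^2$ becomes, after division by a suitable square, a positive integer lying in that lemma's target set, while simultaneously respecting the precise arithmetic shape of the lemma's hypotheses — the exclusion of integers of the form $2^{2a+1}(8b+7)$, of perfect squares, of multiples of $5$, and so on. Once these reductions are in place, all that remains — the finitely many $u<u_0$ and the finitely many $z\in Z$ — is a routine finite computation.
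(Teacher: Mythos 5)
Your overall strategy coincides with the paper's: the same equivalence $u\ra g \Leftrightarrow 3u+\alpha\rat\langle a_1,\dots,a_k\rangle$, the same split into eight representative batches, the same two mechanisms (Method (i) reducing to a ternary lemma after holding back variables, Method (ii) via Lemma \ref{lem11}), and the same finite check below a threshold $u_0$. But as written this is a plan rather than a proof, and the closing claim that ``all that remains is a routine finite computation'' conceals two places where a genuinely non-routine idea is required and your template would not go through as stated.

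First, the exceptional sets in Lemmas \ref{lem234}(ii) and \ref{lem346} are not unions of congruence classes: they exclude $\{3c^2\}$ and perfect squares, respectively. No choice of $B(i)$, modulus and residue bookkeeping can guarantee that $3u+\alpha-\sum b_ix_i^2$ avoids such a sporadic set. The paper handles this (for $p_8(2,3,4,5)$) by offering the \emph{pair} of shifts $\nu-5x^2$ and $\nu-5(18-x)^2$, both lying in the right class mod $18$, and proving by a size estimate that they cannot both equal $3\cdot(\text{square})$ once $\nu>44243$ --- which is also why $u_0$ is as large as $14743$ there. Second, four of the forms in Table \ref{tablepm} carry no Method label at all ($p_8(2,2,3,6)$, $p_8(2,3,4,6)$, $p_8(2,2,3,3,3)$, $p_8(3,4,5,6,10)$) precisely because they fit neither template: for instance $p_8(2,3,4,6)$ requires a four-way split on $\nu\bmod 4$, representations by the non-diagonal genera of $\langle 2,4,18\rangle$ and $\langle 2,4,9\rangle$ with $\prec_{d,a}$ computations, the recombination $9z^2+18w^2=3(z+2w)^2+6(z-w)^2$ together with a sign change on $w$ to force the coordinates prime to $3$, and a separate descent $\nu=4^st$ when $4\mid\nu$; and $p_8(2,2,3,6)$ splits into an odd case (via $\langle 2,2,18\rangle$ and the same recombination trick) and an even case that halves $\nu$ and invokes the quaternary result of \cite[Theorem 2.2]{JO}. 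You should either supply these arguments or at least flag them as the non-automatic steps, since they are the actual content of the proposition beyond the framework you describe.
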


\begin{proof}
{\bf (Case 1)} The first case is a typical case of using Method (i) and we choose $p_8(2,2,2,3)$ as the representative.
Put $\alpha=2+2+2+3=9$ and let $L=\langle 2,2,2,3\rangle$.
It suffices to show that $3u+\alpha \rat L$ for every positive integer $u\ge 2$ except when $u\in Z=\{8,11\}$.
Let $v$ be a positive integer greater than $u_0=166$ and put $\nu=3v+\alpha$.
Then $\nu$ is a positive integer greater than 507 such that $\nu\equiv 0\Mod 3$.
Put $b_1=3$, $d_1=2$ and
$$
H=S_{8,4} \cup S_{16,2} \cup S_{16,6} \cup S_{16,10} \cup S_{32,16}.
$$
One may easily check that there is an integer $x$ with
$$
x\in B(1)=\{1,2,4,5,7,13\}
$$
such that $\nu-b_1x^2\in H$.
Then
$$
\frac{\nu-b_1x^2}{d_1}\in \frac{1}{d_1}H=\left\{ \frac{v}{d_1} : v\in H \right\}=S_{4,2} \cup S_{8,1} \cup S_{8,3} \cup S_{8,5} \cup S_{16,8}.
$$
By Lemma \ref{lem111}, we have 
$$
\frac{\nu-b_1x^2}{d_1}\rat K=\langle 1,1,1\rangle.
$$
It follows that
$$
\nu-b_1x^2\rat \langle 2,2,2\rangle
$$
and thus we have
$$
\nu\rat L=\langle 2,2,2,3\rangle
$$
since $x\not\equiv 0\Mod 3$.
One may directly check that
$$
3u+\alpha \rat L
$$
for every positive integer $u$ with $2\le u\le u_0=166$ except for $u\in Z=\{8,11\}$.
This completes the proof for the case $g=p_8(2,2,2,3)$.

\noindent {\bf (Case 2)} In some of the cases, one may take more than one vectors $(b_1,b_2,\dots,b_l)$ depending on the congruence class of $\nu$ in Method (i).
We take $p_8(2,3,3,4)$ as the representative of this case.
Let $v$ be a positive integer greater than 117 and put $\nu=3v+12$.
Then $\nu$ is a positive integer greater than 363 such that $\nu\equiv 0\Mod 3$.

First, assume that $\nu\equiv 0\Mod 2$.
One may easily check that there is an integer $x\in \{1,2\}$ such that
such that $\nu-4x^2$ is a positive integer congruent to 0 or 6 modulo 8.
By Lemma \ref{lem233}, we have $\nu-4x^2\rat \langle 2,3,3\rangle$.
Thus $\nu\rat \langle 2,3,3,4\rangle$.

Second, assume that $\nu\equiv 1,7\Mod 8$.
One may easily check that there is an integer $x\in \{1,2\}$ such that
such that $\nu-2x^2$ is a positive integer congruent to 7 modulo 8.
By Lemma \ref{lem334}, we have $\nu-2x^2\rat \langle 3,3,4\rangle$.
Thus $\nu\rat \langle 2,3,3,4\rangle$.

Last, assume that $\nu\equiv 3,5\Mod 8$.
One may easily check that there is an integer $x\in \{1,5,7,11\}$ such that
such that $\nu-3x^2$ is a positive integer satisfying
$$
\nu-3x^2\equiv 2,8,18,24,34,50,56\Mod{64}.
$$
By Lemma \ref{lem234}, we have $\nu-3x^2\rat \langle 2,3,4\rangle$.
Thus $\nu\rat \langle 2,3,3,4\rangle$.

One may directly check that
$$
3u+12\rat \langle 2,3,3,4\rangle
$$
for every positive integer $u$ with $2\le u\le 117$ except $u=12$.

\noindent {\bf (Case 3)} The third case is to use Method (i) with a technique which is useful when we use Lemma \ref{lem234} with assumption (ii) in that lemma.
We pick $p_8(2,3,4,5)$ as the representative.
Let $v$ be a positive integer greater than 14743 and put $\nu=3v+14$.
Then $\nu$ is a positive integer greater than 44243 such that $\nu\equiv 2\Mod 3$.
One may easily check that there is a positive integer $x$ with
$$
x\in \{1,2,4,5,7,8\}
$$
such that $\nu-5x^2\equiv 3\Mod{18}$.
It is clear that
$$
\nu-5(18-x)^2\equiv \nu-5x^2\equiv 3\Mod{18}.
$$
We assert that at least one of $\nu-5x^2$ and $\nu-5(18-x)^2$ is not of the form $3c^2$ for some $c\in \n$.
To prove the assertion, suppose that
$$
\nu-5x^2=3a^2\ \ \text{and}\ \ \nu-5(18-x)^2=3b^2
$$
for some positive integers $a$ and $b$ with $b<a$.
Then $3(a^2-b^2)=1620-180x$ and it follows that
$$
(a-b)(a+b)=60(9-x)\in \{ 60,120,240,300,420,480\}.
$$
Thus $a-b$ must be an even positive integer.
Since $a-b\ge 2$, we have $a+b\le 240$.
From this and $b\le a-2$, it follows that $a\le 121$.
Then
$$
\nu\le 3a^2+5x^2\le 3\cdot 121^2+5\cdot 8^2=44243,
$$
which is absurd.
Thus we have the assertion.
Now by Lemma \ref{lem234}, we have
$$
\nu-5x^2\rat \langle 2,3,4\rangle.
$$
Therefore, $\nu\rat \langle 2,3,4,5\rangle$.
One may directly check that 
$$
3u+14\rat \langle 2,3,4,5\rangle
$$
for every positive integer $u$ with $2\le u\le 14743$.

\noindent {\bf (Case 4)} The fourth case is to use Method (ii).
We take $p_8(3,3,4,5,10)$ as the representative.
Let $u$ be an integer greater than 3620.
We first assert that the ternary quadratic form $L=\langle 1,1,42\rangle$ represents every positive integer $g$ satisfying
\begin{equation} \label{eq334510}
g\equiv 2\Mod 3,\ g\not\equiv 0\Mod 7,\ g\equiv 2,4,10,12,14\Mod{16}.
\end{equation}
and prove the proposition using this assertion.
To prove the assertion, let $g$ be a positive integer satisfying the above congruence conditions.
One may easily check that $g\ra \gen(L)$.
Note that the class number of $L$ is two and
$$
\gen(L)=\left\{ L,\ M=\langle 2\rangle \perp \begin{pmatrix}2&1\\1&11\end{pmatrix}\right\}.
$$
If $g\ra M$, there is a vector $(x,y,z)\in \z^3$ such that
$$
g=2x^2+2y^2+11z^2+2yz.
$$
Since $g$ is even, so is $z$.
It follows that
\begin{align*}
g&=2x^2+2y^2+44z_1^2+4yz_1\\
&=(x+y+z_1)^2+(x-y-z_1)^2+42z_1^2,
\end{align*}
where $z=2z_1$.
Thus $g\ra L$ and we have the assertion.
One may easily check that there is a vector $(v,w)$ with
$$
v\in \{-1,0,1,2\},\quad w\in \{-2,-1,0,1,2,3\}
$$
such that $u-5P_8(v)-140P_8(w)-40$ is a positive integer satisfying \eqref{eq334510}.
Then
$$
u-5P_8(v)\ra p_8(3,3,4,10)
$$
by Lemma \ref{lem11} with $a=4$ and $b=10$.
Thus $u\ra p_8(3,3,4,5,10)$.
One may directly check that every positive integer $u$ with $3\le u\le 3620$ is represented by $p_8(3,3,4,5,10)$.

\noindent {\bf (Case 5)} The fifth case is when $g=p_8(2,2,3,6)$.
Let $v$ be a positive integer greater than 863 and put $\nu=3v+13$.
Then $\nu$ is a positive integer greater than 2602 such that $\nu\equiv 1\Mod 3$.
First, assume that $\nu$ is odd.
One may easily check that there is an integer $z$ with
$$
z\in \{1,5,11,17\}
$$
such that $\nu-9z^2$ is a positive integer satisfying
$$
\nu-9z^2\equiv 4,6,10,12,20,22,24,26,28,36,38,40,42,44,52,54,58\ \ \text{or}\ \ 60\Mod{64}.
$$
Since the class number of $\langle 2,2,18\rangle$ is one, one may easily check that
$$
\nu-9z^2\ra \langle 2,2,18\rangle.
$$
So there is a vector $(x,y,w)\in \z^3$ such that
$$
\nu-9z^2=2x^2+2y^2+18w^2.
$$
Thus we have
\begin{align*}
\nu&=2x^2+2y^2+9z^2+18w^2\\
&=2x^2+2y^2+3(z+2w)^2+6(z-w)^2.
\end{align*}
Note that $xy\not\equiv 1\Mod 3$ since $\nu\equiv 1\Mod 3$.
If $w\equiv 0\Mod 3$, then $(z+2w)(z-w)\not\equiv 0\Mod 3$ and we are done.
Suppose that $w\not\equiv 0\Mod 3$.
Then by changing the sign of $w$ if necessary, we may assume that $z\equiv -w\Mod 3$.
Thus $(z+2w)(z-w)\not\equiv 0\Mod 3$.
So we have $\nu\rat \langle 2,2,3,6\rangle$.

Second, assume that $\nu$ is even.
If we put $\nu=2\nu'$, then $\nu'$ is a positive integer greater than 11 congruent to 2 modulo 3, we have $\nu' \rat \langle 1,1,3,6\rangle$ by \cite[Theorem 2.2]{JO}.
So there is a vector $(x,y,z,w)\in \z^4$ with $xyzw\not\equiv 0\Mod 3$ such that
$$
\nu'=x^2+y^2+3z^2+6w^2.
$$
It follows that
$$
\nu=2x^2+2y^2+3(2w)^2+6z^2
$$
with $xy\cdot 2w\cdot z\not\equiv 0\Mod 3$.
Thus $\nu\rat \langle 2,2,3,6\rangle$.
One may easily check that
$$
3u+13\rat \langle 2,2,3,6\rangle
$$
for every positive integer $u$ with $2\le u\le 863$ except $u=14$.

\noindent {\bf (Case 6)} The sixth case is when $g=p_8(2,2,3,3,3)$.
One may directly check that $p_8(2,2,3,3,3)$ represents all positive integers from 2 to 18 except 14.
On the other hand, every positive integer $u$ greater than or equal to 2 except 14 is represented by $p_8(2,2,3,6)$ by (Case 5) and thus also represented by $p_8(2,2,3,3,3)$.

\noindent {\bf (Case 7)} The seventh case is when $g=p_8(2,3,4,6)$.
Let $v$ be a positive integer greater than 358 and put $\nu=3v+15$.
Then $\nu$ is a positive integer greater than 1089 such that $\nu\equiv 0\Mod 3$.

First, assume that $\nu\equiv 1\Mod 4$.
One may easily check that there is an integer $z_1\in \{1,5,7,11\}$ such that
$$
\nu-9z_1^2\equiv 4,12,20,24,36,40,44,52\ \ \text{or}\ \ 56\Mod{64}.
$$
From this, one may easily check that $\nu-9z_1^2\ra \gen (\langle 2,4,18\rangle)$.
If we put $L(1)=\langle 2,4,18\rangle$, then the class number of $L(1)$ is two and
$$
\gen(L(1))=\left\{ L(1),\ L(2)=\begin{pmatrix}4&2&0\\2&6&2\\0&2&8\end{pmatrix}\right\}.
$$
Note that
$$
L(2)\prec_{4,0}L(1).
$$
Since $\nu-9z_1^2\ra \gen(L(1))$ and $\nu-9z_1^2\equiv 0\Mod 4$, it follows that $\nu-9z_1^2\ra L(1)$.
Thus there is a vector $(x_1,y_1,w_1)\in \z^3$ such that
$$
\nu-9z_1^2=2x_1^2+4y_1^2+18w_1^2.
$$
It follows that
\begin{align*}
\nu&=2x_1^2+4y_1^2+9z_1^2+18w_1^2\\
&=2x_1^2+4y_1^2+3(z_1+2w_1)^2+6(z_1-w_1)^2.
\end{align*}
Note that $2x_1^2+4y_1^2>0$ since $\nu-9z_1^2=18w_1^2$ cannot hold because of the congruence condition on $\nu-9z_1^2$.
So by Lemma \ref{lemJones}, we may assume that $x_1y_1\not\equiv 0\Mod 3$.
If $w\equiv 0\Mod 3$, then we have $z_1+2w_1\equiv z_1-w_1\not\equiv 0\Mod 3$ since $z_1\not\equiv 0\Mod 3$.
When $w\not\equiv 0\Mod 3$, then by changing the sign of $w$, we may take $w_1\equiv -z_1\Mod 3$ so that $z_1+2w_1\equiv z_1-w_1\not\equiv 0\Mod 3$ also.
Thus we have $\nu\rat \langle 2,3,4,6\rangle$.

Second, assume that $\nu\equiv 3\Mod 4$.
One may easily check that there is an integer $y_2\in \{1,5\}$ such that
$$
\nu-3y_2^2\equiv 4,8,12,24\ \ \text{or}\ \ 28\Mod{32}.
$$
Then
$$
\frac{\nu-3y_2^2}{2}\equiv 2,4,6,12\ \ \text{or}\ \ 14\Mod{16}.
$$
By Lemma \ref{lem123}, we have
$$
\frac{\nu-3y_2^2}{2}\rat \langle 1,2,3\rangle.
$$
Thus $\nu-3y_2^2\rat \langle 2,4,6\rangle$ and it follows that $\nu\rat \langle 2,3,4,6\rangle$.

Third, assume that $\nu\equiv 2\Mod 4$.
One may easily check that there is an integer $w_3\in \{2,4\}$ such that
$$
\nu-18w_3^2\equiv 2,6\ \ \text{or}\ \ 10\Mod{16}.
$$
From this, one may easily check that $\nu-18w_3^2\ra \gen(\langle 2,4,9\rangle)$.
If we put $L(3)=\langle 2,4,9\rangle$, then the class number of $L(3)$ is three and
$$
\gen(L(3))=\left\{ L(3), L(4)=\langle 1,2,36\rangle,\ L(5)=\begin{pmatrix}3&1&0\\1&5&2\\0&2&6\end{pmatrix}\right\}.
$$
Note that
$$
L(i)\prec_{24,r}L(3),\quad i=4,5,\ r=0,6,12,18.
$$
Since $\nu-18w_3^2\equiv 0\Mod 6$, we have $\nu-18w_3^2\ra L(3)$.
So there is a vector $(x_3,y_3,z_3)\in \z^3$ such that
\begin{align*}
\nu&=2x_3^2+4y_3^2+9z_3^2+18w_3^2\\
&=2x_3^2+4y_3^2+3(z_3+2w_3)^2+6(z_3-w_3)^2.
\end{align*}
Note that $2x_3^2+4y_3^2>0$ since $\nu-18w_3^2=9z_3^2$ cannot hold because of the congruence condition on $\nu-18w_3^2$.
By Lemma \ref{lemJones}, we may assume that $x_3y_3\not\equiv 0\Mod 3$.
We have $z_3+2w_3\equiv z_3-w_3\not\equiv 0\Mod 3$ when $w_3\equiv 0\Mod 3$.
If $w_3\not\equiv 0\Mod 3$, then by changing the sign of $w$ if necessary, we may take $w_3\equiv -z_3\Mod 3$ so that
$$
z_3+2w_3\equiv z_3-w_3\not\equiv 0\Mod 3.
$$
Thus we have $\nu\rat \langle 2,3,4,6\rangle$.

Last, assume that $\nu\equiv 0\Mod 4$.
We may write $\nu=4^st$ with $s$ a nonnegative integer and $t$ a positive integer not divisible by 4.
Note that $t\equiv 0\Mod 3$ since $\nu\equiv 0\Mod 3$.
Suppose that $t\ge 15$.
Then we have $t\rat \langle 2,3,4,6\rangle$.
So there is a vector $(x_4,y_4,z_4,w_4)\in \z^4$ such that $\nu=2x_4^2+3y_4^2+4z_4^2+6w_4^2$.
It follows that
$$
\nu=4^st=2(2^sx_4)^2+3(2^sy_4)^2+4(2^sz_4)^2+6(2^sw_4)^2,
$$
and we are done.
Thus we may assume that $t\in \{3,6,9\}$.
However we already checked that, for any $l\in \{4^2\cdot 3,4\cdot 6,4\cdot 9\}$, we have
$$
l\rat \langle 2,3,4,6\rangle.
$$
From this follows that $\nu\rat \langle 2,3,4,6\rangle$.
One may directly check that 
$$
3u+15\rat \langle 2,3,4,6\rangle
$$
for every positive integer $u$ with $2\le u\le 358$ except $u=18$.

\noindent {\bf (Case 8)} The eighth case is when $g=p_8(3,4,5,6,10)$.
Let $v$ be a positive integer greater than 384 and put $\nu=3v+28$.
Then $\nu$ is a positive integer greater than 1180 such that $\nu\equiv 1\Mod 3$.

First, assume that $\nu\equiv 0\Mod 2$.
One may easily check that there is an integer $x_1\in \{1,2,4\}$ such that $\nu-10x_1^2$ is an integer greater than 24 congruent to 6 modulo 18.
By Lemma \ref{lem3456}, we have $\nu-10x_1^2\rat \langle 3,4,5,6\rangle$.
Since $x_1\not\equiv 0\Mod 3$, we have $\nu\rat \langle 3,4,5,6,10\rangle$.

Second, assume that $\nu\equiv 3,5,7\Mod 8$.
One may easily check that there is an integer $x_2\in \{1,2\}$ such that $\nu-10x_2^2$ in an integer greater than 99 congruent to 3 or 5 modulo 8.
By Lemma \ref{lem3456}, we have $\nu-10x_2^2\rat \langle 3,4,5,6\rangle$, and thus
$$
\nu\rat \langle 3,4,5,6,10\rangle.
$$

Last, assume that $\nu\equiv 1\Mod 8$.
One easily check that there is an integer $y\in \{1,2,5,7,10,14\}$ such that $\nu-3-6y^2$ is a positive integer satisfying
$$
\nu-3-6y^2\equiv \pm 1\Mod 5,\ \nu-3-6y^2\equiv 2,8,10,14\Mod{16}.
$$
Define a ternary $\z$-lattice $L$ by $L=\langle 4,5,10\rangle$.
From this, one may easily check that $\nu-3-6y^2\ra \gen(L)$, where
$$
\gen(L)=\{ L,\ M=\langle 1,10,20\rangle \}
$$
Since $\nu-3-6y^2$ is even, we have $\nu-3-6y^2\ra \langle 4,10,20\rangle$ and thus 
$$
\nu-3-6y^2\ra L.
$$
So there is a vector $(z,w,v)\in \z^3$ such that
$$
\nu-3-6y^2=4z^2+5w^2+10v^2.
$$
Note that $5w^2+10v^2>0$ since $\nu-3-6y^2=4z^2$ cannot hold because 
$$
\nu-3-6y^2\equiv 2,8,10,14\Mod{16}.
$$
By Lemma \ref{lemJones}, we may assume that $wv\not\equiv 0\Mod 3$.
Since $\nu-3-6y^2\equiv 1\Mod 3$, it follows that $z\not\equiv 0\Mod 3$.
Since we take $y\not\equiv 0\Mod 3$ also, we have
$$
\nu\rat \langle 3,4,5,6,10\rangle.
$$
One may directly check that
$$
3u+28\rat \langle 3,4,5,6,10\rangle
$$
for every positive integer $u$ with $3\le u\le 384$ except $u=27$.
This completes the proof.
\end{proof}

\section{Proofs of the main results}
Note that the proofs for Theorems \ref{thm3} and \ref{thm4} are quite similar to that of Theorem \ref{thm2}, we only provide the proofs for Theorems \ref{thm2} and \ref{thm5}.

\noindent ({\it Proof of Theorem \ref{thm2}})
We first prove that every octagonal form listed in Table \ref{tablet2} is tight $\mathcal T(2)$-universal.
Let $g$ be any octagonal form in Table \ref{tablet2}.
One may directly check that $g$ does not represent 1 and it does represent all integers from 2 to 28.
Now one may easily check that $g$ represents every integer greater than or equal to 29 by using Lemma \ref{lem2233t} and Proposition \ref{propmany}.

To prove the theorem, we use the algorithm in Section 2 with $n=2$.
Note that $U(1)=U(2)=\emptyset$ and
$$
E(3)=\{(2,2,3),(2,3,4)\}
$$
by Lemma \ref{lembase}.
One may directly check that
$$
\psi(2,2,3)=6,\ \ \psi(2,3,4)=8.
$$
Thus we have
$$
E(4)=\left\{ \begin{array}{c}(2,2,2,3),(2,2,3,3),(2,2,3,4),(2,2,3,6),\\(2,3,3,4),(2,3,4,4),(2,3,4,5),(2,3,4,6),(2,3,4,8)\end{array}\right\}.
$$
One may directly check that
$$
\begin{array}{lll}
\psi(2,2,2,3)=8,&\psi(2,2,3,3)=9,&\psi(2,2,3,6)=14,\\[0.3em]
\psi(2,3,3,4)=11,&\psi(2,3,4,4)=12,&\psi(2,3,4,6)=18,
\end{array}
$$
and the other three vectors in $E(4)$ constitute $U(4)$ so that
$$
U(4)=\{(2,2,3,4),(2,3,4,5),(2,3,4,8)\}.
$$
Since $U(k)=\emptyset$ for $k=1,2,3$, we have $NU(4)=U(4)$.
Oone may easily continue to yield that
$$
\begin{array}{llll}
\vert E(5)\vert=52,&\vert U(5)\vert=49,&\vert NU(5)\vert=39,&\vert A(5)\vert=3,\\[0.3em]
\vert E(6)\vert=30,&\vert U(6)\vert=30,&\vert NU(6)\vert=15,&\vert A(6)\vert=0.
\end{array}
$$
Hence the algorithm stops here by Lemma \ref{lemesc}.
Note that
$$
A(5)=\{(2,2,2,3,3),(2,2,3,3,3),(2,2,3,3,5)\}
$$
and
$$
\psi(2,2,2,3,3)=11,\ \psi(2,2,3,3,3)=14,\ \psi(2,2,3,3,5)=14.
$$
One may check that, for $k=5,6$, the vectors $(a_1,a_2,\dots,a_k)\in NU(k)$ exactly corresponds to
the octagonal forms in Table \ref{tablet2}.
Furthermore, one may easily check that
$$
C(2)=\{2,3,4,6,8,9,11,12,14,18\}.
$$
The last part of the theorem immediately follows from this and Lemma \ref{lemcv}.
This completes the proof. \hfill $\qed$

\vskip 10pt
\noindent ({\it Proof of Theorem \ref{thm5}})
For $n=1,2,\dots$, we let
$$
g_n=p_8(n,n,n+1,n+2,\dots,2n-1)\ \ \text{and}\ \ h_n=p_8(n,n+1,n+2,\dots,2n-1,2n).
$$
We first prove that both $g_n$ and $h_n$ are tight $\mathcal T(n)$-universal for every $n\ge 5$.
Note that $p_8(1,1,3,3)$ is universal (see \cite[Theorem2.1]{JO}).
From this and \cite[Lemma 2.2]{MG}, it follows that 
$g_n$ is tight $\mathcal T(n)$-universal for every $n\ge 7$.
On the other hand, one may easily show that the quinary octagonal form $p_8(1,2,3,3,3)$ is universal by using \cite[Theorem 3.1]{JO}.
From this \cite[Lemma 2.2]{MG}, it follows that $h_n$ is tight $\mathcal T(n)$-universal for every $n\ge 9$.
The tight $\mathcal T(n)$-universalities for
$$
g_n,\ n\in \{5,6\} \ \ \text{and}\ \ h_n,\ n\in \{5,6,7,8\}
$$
are already proved in Proposition \ref{propmany}.

From the algorithm, we have
$$
\vert E(n+1)\vert=2,\ \vert U(n+1)\vert=2,\ \vert NU(n+1)\vert=2,\ \vert A(n+1)\vert=0,
$$
and
$$
C(n)=\{n,n+1,n+2,\dots,2n\}.
$$
\hfill $\qed$

\begin{table}[ht]
\caption{New tight $\mathcal T(2)$-universal octagonal forms $p_8(a_1,a_2,\dots,a_k)$}
\label{tablet2}
\begin{center}
\begin{tabular}{|cccccc|c|}
\Xhline{1pt}
$a_1$&$a_2$&$a_3$&$a_4$&$a_5$&$a_6$&Conditions on $a_k  \ (5\le k\le6)$\\
\hline
2&2&3&4&&&\\
2&3&4&5&&&\\
2&3&4&8&&&\\
\hline
2&2&2&2&3&&\\
2&2&2&3&$a_5$&&$5\le a_5\le 8$, $a_5\neq 7$\\
2&2&3&3&$a_5$&&$6\le a_5\le 9$, $a_5\neq 8$\\
2&2&3&5&6&&\\
2&2&3&6&$a_5$&&$6\le a_5\le 14,\ a_5\neq 13$\\
2&3&3&3&4&&\\
2&3&3&4&$a_5$&&$4\le a_5\le 11$, $a_5\neq 5,8,10$\\
2&3&4&4&$a_5$&&$4\le a_5\le 12,\ a_5\neq 5,8,11$\\
2&3&4&6&$a_5$&&$6\le a_5\le 18,\ a_5\neq 8,17$\\
\hline
2&2&2&3&3&$a_6$&$a_6=3,11$\\
2&2&3&3&3&$a_6$&$3\le a_6\le 14,\ a_6\neq 4,6,7,9,13$\\
2&2&3&3&5&$a_6$&$5\le a_6\le 14,\ a_6\neq 6,7,9,13$\\
\Xhline{1pt}
\end{tabular}
\end{center}
\end{table}

\begin{table}[ht]
\caption{New tight $\mathcal T(3)$-universal octagonal forms $p_8(a_1,a_2,\dots,a_k)$}
\label{tablet3}
\begin{center}
\begin{tabular}{|ccccccc|c|}
\Xhline{1pt}
$a_1$&$a_2$&$a_3$&$a_4$&$a_5$&$a_6$&$a_7$& Conditions on $a_k  \ (5\le k\le7)$\\
\hline
3&3&4&5&$a_5$&&&$6\le a_5\le 10$, $a_5\neq 7,8$\\
3&4&4&5&6&&&\\
3&4&5&6&$a_5$&&&$7\le a_5\le 13$, $a_5\neq 9,10,11$\\
\hline
3&3&3&3&4&$5$&&\\
3&3&3&4&4&5&&\\
3&3&3&4&5&$a_6$&&$5\le a_6\le 13,\ a_6\neq 6,9,10$\\
3&3&4&4&5&$a_6$&&$5\le a_6\le 17,\ a_6\neq 6,9,10,15,16$\\
3&3&4&5&5&$a_6$&&$5\le a_6\le 21,\ a_6\neq 6,9,10,19,20$\\
3&3&4&5&7&$a_6$&&$7\le a_6\le 21,\ a_6\neq 9,10,19,20$\\
3&3&4&5&8&$a_6$&&$8\le a_6\le 21,\ a_6\neq 9,10,19,20$\\
3&3&4&5&11&13&&\\
3&3&4&5&13&14&&\\
3&4&5&5&6&$a_6$&&$6\le a_6\le 22,\ a_6\neq 7,8,12,13,20,21$\\
3&4&5&6&6&$a_6$&&$6\le a_6\le 22,\ a_6\neq 7,8,12,13,20,21$\\
3&4&5&6&9&$a_6$&&$9\le a_6\le 36,\ a_6\neq 12,13,34,35$\\
3&4&5&6&10&$a_6$&&$10\le a_6\le 27,\ a_6\neq 12,13,25,26$\\
3&4&5&6&11&$a_6$&&$11\le a_6\le 27,\ a_6\neq 12,13,25,26$\\
3&4&5&6&14&16&&\\
3&4&5&6&16&17&&\\
\hline
3&3&3&4&5&14&16&\\
3&3&3&4&5&16&17&\\
3&3&4&4&4&4&5&\\
3&3&4&4&4&5&$a_7$&$15\le a_7\le 21$, $a_7\neq 17,19,20$\\
3&4&5&5&5&5&6&\\
3&4&5&5&5&6&$a_7$&$20\le a_7\le 27$, $a_7\neq 22,25,26$\\
\Xhline{1pt}
\end{tabular}
\end{center}
\end{table}

\begin{table}[ht]
\caption{New tight $\mathcal T(4)$-universal octagonal forms $p_8(a_1,a_2,\dots,a_k)$}
\label{tablet4}
\begin{center}
\begin{tabular}{|ccccccc|c|}
\Xhline{1pt}
$a_1$&$a_2$&$a_3$&$a_4$&$a_5$&$a_6$&$a_7$& Conditions on $a_k  \ (6\le k\le7)$\\
\hline
4&5&6&7&8&&&\\
\hline
4&4&4&5&6&7&&\\
4&4&5&6&6&7&&\\
4&4&5&6&7&$a_6$&&$7\le a_6\le 23,\ a_6\neq 8,20,21,22$\\
\hline
4&4&5&5&5&6&7&\\
4&4&5&5&6&7&$a_7$&$20\le a_7\le 28,\ a_7\neq 23,25,26,27$\\
\Xhline{1pt}
\end{tabular}
\end{center}
\end{table}



\begin{thebibliography}{abcd}

\bibitem {BH} J.W. Benham and J.S. Hsia, {\em Spinor equivalence of quadratic forms}, J. Number Theory \textbf{17}(1983), 337-342.

\bibitem {B} M. Bhargava, {\em On the Conway-Schneeberger fifteen theorem}, Contemp. Math. \textbf{272}(2000), 27-38.

\bibitem {BHa} M. Bhargava and J. Hanke, {\em Universal quadratic forms and the 290 theorem}, preprint.

\bibitem {BK} W. Bosma and B. Kane, {\em The triangular theorem of eight and representation by quadratic polynomials}, Proc. Amer. Math. Soc. \textbf{141}(2013), no. 5, 1473-1486. 

\bibitem {D} L.E. Dickson, {\em Quaternary quadratic forms representing all integers}, Amer. J. Math. \textbf{49}(1927), no. 1, 39-56.

\bibitem {J} B.W. Jones, {\em Representation by positive ternary quadratic forms}, Ph.D. Thesis, University of Chicago, 1928.

\bibitem {Ju} J. Ju, {\em Universal sums of generalized pentagonal numbers}, Ramanujan J. \textbf{51}(2020), no. 3, 479-494.

\bibitem {Ju2} J. Ju, {\em Almost universal sums of triangular numbers with one exception}, preprint.

\bibitem {JK} J. Ju and M. Kim, {\em Tight universal sums of $m$-gonal numbers}, submitted.

\bibitem {JO} J. Ju and B.-K. Oh, {\em Universal sums of generalized octagonal numbers}, J. Number theory \textbf{190}(2018), 292-302.

\bibitem {JOS} J. Ju, B.-K. Oh, B. Seo, {\em Ternary universal sums of generalized polygonal numbers}, Int. J. Number theory \textbf{15}(2019), no. 4, 655-675.

\bibitem {KyO} K. Kim and B.-K. Oh, {\em A sum of squares not divisible by a prime}, submitted.

\bibitem {KO} M. Kim and B.-K. Oh, {\em Tight universal quadratic forms}, submitted.

\bibitem {MG} M. Kim, {\em Tight universal triangular forms}, to appear in Bull. Aust. Math. Soc..

\bibitem {Ki} Y. Kitaoka, {\em Arithmetic of quadratic forms}, Cambridge University Press, 1993.

\bibitem {O1} B.-K. Oh, {\em Regular positive ternary quadratic forms}, Acta Arith. \textbf{147}(2011), 233-243.

\bibitem {O2} B.-K. Oh, {\em Ternary universal sums of generalized pentagonal numbers}, J. Korean Math. Soc. \textbf{48}(2011),  837-847.

\bibitem {OM} O.T. O'Meara, {\em Introduction to quadratic forms}, Springer Verlag, New York, 1963.

\bibitem {R} S. Ramanujan, {\em On the expression of a number in the form $ax^2+by^2+cz^2+du^2$}, Proc. Camb. Phil. Soc. \textbf{19}(1916), 11-21.

\end{thebibliography}
\end{document}